\newcommand{\Email}[1]{\ifthenelse{\equal{#1}{}}{}{\par\noindent {\rm E-mail: }{\it  #1} \par}}
\newcommand{\EmailMarked}[1]{\ifthenelse{\equal{#1}{}}{}{\par\noindent $^*$~{\rm E-mail: }{\it  #1} \par}}
\newcommand{\URLaddress}[1]{\ifthenelse{\equal{#1}{}}{}{\par\noindent {\rm URL: }{\tt  #1} \par}}
\newcommand{\URLaddressMarked}[1]{\ifthenelse{\equal{#1}{}}{}{\par\noindent $^*$~{\rm URL: }{\tt  #1} \par}}
\newcommand{\EmailD}[1]{\ifthenelse{\equal{#1}{}}{}{\par\noindent {$\phantom{^{{\rm a)}}}$~\rm E-mail: }{\it  #1} \par}}
\newcommand{\EmailDD}[1]{\ifthenelse{\equal{#1}{}}{}{\par\noindent {$\phantom{{}^{\dag^1}}$~\rm E-mail: }{\it  #1} \par}}
\newtheorem{resultx}{Result}
\theoremstyle{definition}
\newtheorem{theorem}{Theorem}[section]
\newtheorem{definition}[theorem]{Definition}
\newtheorem{remark}[theorem]{Remark}
\newtheorem{proposition}[theorem]{Proposition}
\newtheorem{lemma}[theorem]{Lemma}
\newtheorem{corollary}[theorem]{Corollary}
\newtheorem{example}[theorem]{Example}
\newcommand{\diag}{\mathrm{diag}}
\newcommand{\KK}{\mathcal{K}}
\newcommand{\LL}{\mathcal{L}}
\newcommand{\id}{\mathrm{Id}}
\newcommand{\epsmach}{\epsilon_{\text{m}}}
\date{} % clear date
\pgfplotsset{compat=1.18}
\title{The persistent Laplacian of non-branching complexes}
\author[1]{Magnus Bakke Botnan}
\author[1]{Rui Dong}
\affil[1]{Department of Mathematics, Vrije Universiteit Amsterdam, The Netherlands}
\begin{document}
\maketitle

%\Address{Department of Mathematics, Vrije Universiteit Amsterdam, The Netherlands.}
%\Email{\href{m.b.botnan@vu.nl}{m.b.botnan@vu.nl}, \href{r.dong@vu.nl}{r.dong@vu.nl}}

\begin{abstract}
Non-branching matrices are real matrices with entries in $\{-1,0,1\}$, where each row contains at most two non-zero entries. Such matrices naturally arise in the study of Laplacians of pseudomanifolds and cubical complexes. 
We show that a basis for the kernel of a non-branching matrix can be computed in near-linear time. 
Especially,
the basis has the special property that the supports of all the column vectors in it are disjoint with each other.
Building on this result, we show that the up persistent Laplacian can be computed in near-linear time for a pair of such spaces and its eigenvalues can be more efficiently computed via computing singular values.
In addition to that,
we analyze the arithmetic operations of up persistent Laplacian with respect to a non-branching filtration.
Furthermore, we show that the up persistent Laplacian of $q$-non-branching simplicial complexes can be represented as the Laplacian of an associated hypergraph, thus providing a higher-dimensional generalization of the Kron reduction,
as well as a Cheeger-type inequality.
Finally, we highlight the efficiency of our method on image data.
\end{abstract}

\section{Introduction}
The eigenvectors and eigenvalues of the Laplacian for graphs carry important structural information about the underlying graph and have found numerous applications throughout the sciences \cite{von2007tutorial,lim2020hodge}. Recently, extending these ideas to (filtrations) of simplicial complexes and hypergraphs have become an active topic of research with significant potential. In this paper, our focus lies with the persistent Laplacian  \cite{MR4164275, pers_lap,gen_pers_lap}, which is a generalization of the classical Laplacian to pairs of spaces. Specifically, for a pair of simplicial, or cubical, complexes $\KK \hookrightarrow \LL$, the $q$-th persistent Laplacian $\triangle_q^{\KK, \LL}$ is defined as the sum of the up persistent Laplacian $\triangle_{q, \mathrm{up}}^{\KK, \LL}$ and the standard down Laplacian $\triangle_{q, \mathrm{down}}^{\KK}$ on $\KK$ . It is worth remarking that this allows for a generalized version of the celebrated Hodge theorem \cite{pers_lap}: the nullity of $\triangle_q^{\KK, \LL}$ is equal to the $q$-th persistent Betti number of the pair $\KK \hookrightarrow \LL$. Additionally, the spectra of $\triangle_q^{\KK, \LL}$ encode finer geometric information of a dataset, which cannot be extracted via standard TDA techniques such as persistent homology \cite{davies23c,wei2023}. 

The bottleneck in computing the persistent Laplacian is the computation of $\triangle_{q, \mathrm{up}}^{\KK, \LL}$, for which two algorithms were recently proposed in \cite{pers_lap}, both with cubic time complexity. Specifically, if the number of $q$-simplices and $(q+1)$-simplices of $\LL$ are denoted by $n_q^\LL$ and $n_{q+1}^\LL$, respectively, the time complexity of computing the up persistent Laplacian $\triangle_{q, \mathrm{up}}^{\KK, \LL}$ is
\[
O\left(n_q^\LL (n_{q+1}^\LL)^2 + (n_{q+1}^\LL)^3\right) \qquad \text{or}\qquad 
O\left( (n_q^\LL)^3 \right),
\]
depending on whether Gaussian elimination or the Moore–Penrose pseudoinverse of a matrix is used. For further details, see \cite[Algorithm 3.1, Algorithm 4.1]{pers_lap}, and for a recent survey of persistent Laplacians, we refer to \cite{wei2023}.

\subsection{Overview and contributions}
The goal of this paper is to study the computational and mathematical properties of the persistent Laplacian of $\KK\hookrightarrow \LL$, where $\LL$ is \emph{$q$-non-branching}. That is, any $q$-simplex ($q$-cube) in $\LL$ is the boundary of at most two different $(q+1)$-simplices (($q+1$)-cubes). Equivalently, each row of the $(q+1)$-boundary matrix contains at most two non-zero entries, which are either $1$ or $-1$. We shall refer to matrices with this pattern as \emph{non-branching matrices}. Examples of such complexes are $(q+1)$-pseudomanifolds, e.g., $(q+1)$-dimensional simplicial complexes in $\mathbb{R}^{q+1}$, and $(q+1)$-dimensional cubical complexes. The latter is particularly relevant for the analysis of image data; see \cref{subsec:cubical}. 

%derive an efficient algorithm for computing the persistent Laplacian of $\KK\hookrightarrow \LL$, where $\LL$ is \emph{$q$-non-branching}. That is, any $q$-simplex ($q$-cube) in $\LL$ is the boundary of at most two different $(q+1)$-simplices (($q+1$)-cubes). Equivalently, each row of the $(q+1)$-boundary matrix contains at most two non-zero entries, which are either $1$ or $-1$. We shall refer to matrices with this pattern as \emph{non-branching matrices}. Examples of such complexes are $(q+1)$-pseudomanifolds, e.g., $(q+1)$-dimensional simplicial complexes in $\mathbb{R}^{q+1}$, and $(q+1)$-dimensional cubical complexes. The latter is particularly relevant for the analysis of image data; see \cref{subsec:cubical}. 

In \cref{sec:non_brch_mtx}, we prove the following result:

\begin{resultx}[\cref{thm:weakfactor} and \cref{prop:alg_weak_red}]
    Let $D$ be a non-branching matrix without zero columns. Then, there exist matrices $R \in M_{k \times l}(\mathbb{R})$ and $V \in M_{l \times l}(\mathbb{R})$ computable in $O(k\times\alpha(l)+l)$ arithmetic operations such that $R = DV$, where $\alpha(\cdot)$ is the inverse Ackermann function,
    and $V$ is upper-triangular with entries in $\{-1, 0, 1\}$, the non-zero columns of $R$ are linearly independent,
    {and the supports of the columns in $V$ which have more than one non-zero entries are disjoint to each other.}
    %and the columns of $V$ with more than one non-zero entry are orthogonal to each other. 
    Moreover, these columns correspond precisely to the zero columns of $R$.
\end{resultx}

The key observation is that $D^T$ can be interpreted as the incidence matrix of a graph with loops. 
%Constructing this hypergraph requires $O(k \, l)$ operations and constitutes the primary bottleneck in our computations. 
If $D$ is provided as a sparse matrix, 
we can apply the Disjoint-Union (DSU) algorithm to find the connected components in near-linear time. 
Furthermore, since the rank of $D$ can be immediately derived from $R$, the rank of $D$ is also computable in linear time for sparse inputs. This observation was first made for regular matrices (see Definition~\ref{def:reg_mtrx}) in \cite{twononzero}. While the rank computation in \cite{twononzero} is related to our central strategy, it does not incorporate matrix factorizations as described above. 
By leveraging the theory of graph with loops, our method extends to a broader class of matrices, which is crucial for the computation of the persistent Laplacian.

\iffalse
\begin{remark}
    As a consequence of \cref{thm:weakfactor}, the persistent homology in degree $q+1$ of a $(q+1)$-dimensional filtered $q$-non-branching complex can be computed in $O(n_{q+1}\times n_q)$ where $n_q$ is the number of $q$-simplices. Furthermore, if the $(q+1)$-st boundary matrix is represented as a sparse matrix, this results in an $O(n_{q+1} + n_q)$ algorithm. 
\end{remark}
\fi

In \cref{sec:up_pers_lap}, we focus on the efficient computation of the persistent Laplacian and its eigenvalues, 
we show the following two results, where $n_q^\LL$ denotes the number of $q$-simplices in $\LL$, and similarly for the other cases:

\begin{resultx}[\cref{prop:alg_lap}]\label{result:lap_arithm}
    Let $\KK \hookrightarrow \LL$ be simplicial complexes, where $\LL$ is $q$-non-branching. Then, the up persistent Laplacian $\Delta_{q, \mathrm{up}}^{\KK, \LL}$ can be computed in 
    \[
    O\left((n_q^\LL-n_q^\KK)\times \alpha(n_{q+1}^\LL) + n_{q+1}^\LL+(n_q^\KK)^2\right)
    \]
    arithmetic operations,
    {here $\alpha(\cdot)$ is the inverse Ackermann function.}
\end{resultx}
\begin{remark}\label{rem:full-laplacian}
The \emph{$q$-th persistent Laplacian}, $\Delta_q^{\KK, \LL}: C_q^\KK \to C_q^\KK$, is defined as the sum of the up persistent Laplacian $\Delta_{q, \mathrm{up}}^{\KK, \LL}$ and the down Laplacian $\Delta_{q, \mathrm{down}}^{\KK}$ (see \cref{subsec:sp_cmplx}).
Consequently, the ``persistent" aspect is captured exclusively by $\Delta_{q, \mathrm{up}}^{\KK, \LL}$. This makes the up Laplacian the primary focus from a computational perspective, particularly when analyzing how topological features persist.

The down Laplacian $\Delta_{q, \mathrm{down}}^{\KK}$, which depends solely on $\KK$, can be constructed using the matrix representation of the $q$-th boundary operator ($B_q^\KK$), its transpose, and diagonal matrices containing the weights of $q$-simplices and $(q-1)$-simplices, as detailed in \cref{sec:eigenvalues}. Alternatively, $\Delta_{q, \mathrm{down}}^{\KK}$ can be computed directly in quadratic time (in the number of $q$-simplices) as the difference between a diagonal matrix and a generalized adjacency matrix (see, e.g., \cite[SM1]{pers_lap}). In particular, we see that the computational cost, even in the non-branching setting, is dominated by the up persistent part.
\end{remark}

Additionally, for simplicial filtrations, we have the following results.

\begin{resultx}[\cref{prop:time_filtration}]\label{result:filt}
Let $\KK_0\hookrightarrow\KK_1 \hookrightarrow\cdots\hookrightarrow \KK_m\hookrightarrow \LL$ be a simplicial complex filtration where $\LL$ is $q$-non-branching and each $\KK_i$ contains exactly one more $q$-simplex than $\KK_{i-1}$.
Then the sequence of up persistent Laplacians $\{\Delta_{q, \mathrm{up}}^{\KK_i, \LL}\}_{i=0}^m$ can be computed in 
\[
O\left((m+1)\times (n_q^{\KK_m})^2+(m+1)\times n_{q+1}^\LL+(n_q^\LL-n_q^{\KK_m}+m)\times \alpha(n_{q+1}^\LL)\right)
\]
arithmetic operations.
%\todo{Shouldn't this be the same as the result above when $m=0$?}
\end{resultx}

The proof of \cref{result:filt} has the following corollary which is important when considering eigenvalues in \cref{sec:eigenvalues}.
\begin{resultx}[\cref{cor:filt}]
    The sequences of matrix pairings $\left\{\big(B_{q+1}^{\LL, \KK_i}, W_{q+1}^{\LL, \KK_i}\big)\right\}_{i=0}^m$ and the matrix products 
$\left\{(W_q^{\KK_i})^{-1/2}B_{q+1}^{\LL, \KK_i} (W_{q+1}^{\LL, \KK_i})^{1/2}\right\}_{i=0}^m$ can be computed in 
\[
O\left((n_q^\LL-n_q^{\KK_m}+m)\times \alpha(n_{q+1}^\LL) + (m+1)\times (n_{q+1}^\LL + n_q^{\KK_m})\right)
\]
arithmetic operations.
\end{resultx}

\setcounter{theorem}{0}

In \cref{subsec:kron_red}, we study the theoretical properties of the up persistent Laplacian for non-branching complexes. It was shown in \cite[Section 4.3]{pers_lap} that when $\KK \hookrightarrow \LL$ are graphs (1-dimensional simplicial complexes), the up persistent Laplacian $\Delta_{0, \mathrm{up}}^{\KK, \LL}$ can be identified as the Laplacian of a weighted graph $\widetilde{\KK}$. This is referred to as the \emph{Kron reduction} \cite{MR3017573}. While this is not generally true for simplicial complexes \cite[Remark 4.12]{pers_lap}, we show in \cref{subsec:kron_red} that $\Delta_{q, \mathrm{up}}^{\KK, \LL}$ coincides with the Laplacian of a weighted oriented hypergraph, provided $\LL$ is $q$-non-branching. We also prove a Cheeger-type inequality for the up persistent Laplacian in \cref{prop:realcheeger}. Intuitively speaking, a pair of non-branching simplicial complexes forms a collection of polyhedra, and the minimal non-zero eigenvalue of $\Delta_{q, \mathrm{up}}^{\KK, \LL}$ is related to the ratio of the area to the volume of each polyhedron.

In \cref{sec:eigenvalues} we discuss our efficient approach to tackling the high computational cost of determining eigenvalues for persistent Laplacians. We detail how leveraging the singular values of related sparse matrices allows for the rapid computation of the top $k$ eigenvalues for both up and down Laplacians.

In \cref{subsec:cubical}, we highlight the efficiency of our algorithm on image data.

The paper concludes with a discussion in \cref{sec:discussion}.

\section{Column reduction of non-branching matrices}\label{sec:non_brch_mtx} % with at most two $\pm 1$ per row
In this section, we prove the following central theorem for \emph{non-branching matrices}, i.e., real matrices with elements in $\{-1,0,1\}$ and for which every row contains at most two non-zero entries.
For a vector $v$,
we refer to the non-zero values as  \emph{support values} of $v$,
and the corresponding indices as \emph{support indices} of $v$. 

\begin{theorem}
\label{thm:weakfactor}
    Let $D$ be a non-branching matrix without zero columns. 
    Then, one can find matrices $R\in M_{k\times l}(\mathbb{R})$ and $V\in M_{l\times l}(\mathbb{R})$
    %in $O(k\, l)$ arithmetic operations 
    such that $R=DV$, where $V$ is upper-triangular with entries in $\{-1,0,1\}$, the non-zero columns of $R$ are linearly independent, 
    and the support indices of columns in $V$ are mutually disjoint. Furthermore, the $i$-th column of $V$ has more than one non-zero entry if and only if the $i$-th column of $R$ is zero.
\end{theorem}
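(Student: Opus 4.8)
The plan is to exploit the observation highlighted right after the statement of Result~A: the transpose $D^T$ is the incidence matrix of a graph $G$ with loops. Concretely, label the columns of $D$ by a vertex set $\{1,\dots,l\}$ and the rows of $D$ by an edge set: a row with two non-zero entries in columns $i<j$ becomes an edge $\{i,j\}$ (oriented and signed according to the two entries $\pm 1$), and a row with a single non-zero entry in column $i$ becomes a loop at $i$. A column of $D$ being non-zero is exactly the hypothesis, so every vertex is incident to at least one edge or loop. The first step is to run a Disjoint-Set-Union (DSU) pass over the edges/loops of $G$ to compute its connected components $C_1,\dots,C_p$; this costs $O(k\cdot\alpha(l)+l)$ arithmetic operations, which is where the inverse Ackermann factor in the statement comes from. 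I will treat a component as \emph{loop-free} if it contains no loop and as \emph{loopy} otherwise.

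Next I build $V$ component by component. For each component $C_s$, pick a spanning tree $T_s$ (recorded during the DSU pass, or by a BFS/DFS restricted to $C_s$). The key distinction:

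\begin{itemize}
\item If $C_s$ is loop-free and acyclic-modulo-tree, i.e. if it is a tree with no extra edge and no loop, then the corresponding columns of $D$ (restricted to the rows meeting $C_s$) are the columns of the incidence matrix of a tree, which are linearly independent and have a one-dimensional orthogonal relation only if the tree is \emph{not} all of $C_s$; but in fact for a tree on $|C_s|$ vertices with $|C_s|-1$ edges the incidence columns are dependent with a unique (up to scaling) relation $\sum_{i\in C_s}\pm e_i$. This signed sum — read off by walking the tree and propagating signs from the edge orientations — is precisely the column of $V$ whose support is all of $C_s$; it has entries in $\{-1,0,1\}$, more than one non-zero entry (as $|C_s|\ge 2$, the $|C_s|=1$ case being impossible since every vertex has an incident edge or loop), and it lies in $\ker D$, so the associated column of $R=DV$ is zero.
\item If $C_s$ contains a loop, or contains a cycle (equivalently $|E(C_s)|\ge |C_s|$), then the incidence columns of $D$ restricted to $C_s$ are already linearly independent, so here the block of $V$ is just the identity on those indices, and the corresponding columns of $R$ equal the original columns of $D$, which are non-zero.
\end{itemize}

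Assembling these blocks (after a suitable reordering of the index set so that, within each loop-free tree component, the ``combining'' vertex is placed last) yields an upper-triangular $V$ with $\{-1,0,1\}$ entries: in a tree component, each non-root vertex's column is a standard basis vector and the root's column is the signed sum supported on the whole component; choosing the vertex ordering along a rooted spanning tree makes this upper triangular. The supports of the multi-entry columns are exactly the (distinct) loop-free tree components, hence mutually disjoint, and they are disjoint from the singleton-support columns by construction — giving the ``mutually disjoint support indices'' claim. The equivalence ``$i$-th column of $V$ has $>1$ non-zero entry $\iff$ $i$-th column of $R$ is zero'' follows because the only multi-entry columns are the tree-relation vectors, which lie in $\ker D$, and conversely every identity-column of $V$ maps to a non-zero column of $D$. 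Linear independence of the non-zero columns of $R$ reduces to the standard fact that across components the incidence columns live on disjoint row sets, and within a loopy/cyclic component a maximal linearly independent subset of incidence columns has full rank; I would cite or quickly reprove the rank formula for incidence matrices of graphs with loops (rank $=$ $\#$vertices $-$ $\#$loop-free-tree-components) to close this.

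The main obstacle I anticipate is not the graph-theoretic bookkeeping but making the \emph{upper-triangularity} and the $O(k\alpha(l)+l)$ bound compatible: one must argue that a single pass suffices to (a) find components, (b) detect within each component whether a loop or independent cycle exists, and (c) extract the spanning tree together with a consistent sign labelling — all before knowing the final index permutation. I expect to handle this by doing the DSU pass first (recording, per component, a flag ``has a loop or surplus edge'' and one representative spanning tree), then a linear-time second pass over vertices to assign the triangular order and propagate signs along each tree. The detailed verification that the resulting $V$ is genuinely upper-triangular after the chosen reordering, and that the sign propagation is well-defined (independent of the tree path, using that $V$'s relation vector is forced up to global sign), is the part that will require the most care; everything else is routine once the graph picture is in place. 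The precise arithmetic-operation count is then deferred to \cref{prop:alg_weak_red}.
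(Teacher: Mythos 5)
Your graph-with-loops setup and the DSU pass match the paper's strategy, and your treatment of loops and of tree components is sound, but the central dichotomy you use is wrong, and it breaks the theorem's conclusion. You claim that if a component contains a cycle (and no loop), then the corresponding columns of $D$ are already linearly independent, so you put an identity block of $V$ there. This is false: what decides dependence is not acyclicity of the underlying graph but sign-consistency (orientability). Take
\[
D=\begin{pmatrix} 1 & -1 & 0\\ 0 & 1 & -1\\ -1 & 0 & 1\end{pmatrix},
\]
a single loop-free connected component whose underlying graph is a triangle, so it falls into your ``has a cycle, $|E(C_s)|\geq|C_s|$'' case. Yet every row has one $+1$ and one $-1$, so the three columns sum to zero and the rank is $2$. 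Under your construction $V=\id_3$ on this component and $R=D$ has three non-zero, linearly \emph{dependent} columns, contradicting both the independence claim and the ``Furthermore'' clause (the kernel vector is never produced). This is not a corner case: the boundary matrices of closed orientable pseudomanifolds give exactly such components, with many cycles but a one-dimensional kernel spanned by the signed all-ones vector on the \emph{whole} component. Your auxiliary rank formula (rank equals number of vertices minus number of loop-free tree components) fails for the same reason.

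The paper's proof turns on precisely this point: components are classified not by cycle structure but as regulable (loop-free and orientable, i.e., column signs can be flipped so every row has one $+1$ and one $-1$), irregular (loop-free but non-orientable), and row-singular (containing a loop); see \cref{def:reg_mtrx}. For a regulable component the kernel vector is the signed all-ones vector supported on the entire component (\cref{lem:simpleprod}), and independence of the remaining columns follows from the rank formula for regular matrices, namely rank equals the number of columns minus the number of components (\cref{lemma:grph_cmpnt}); irregular and row-singular components have full column rank by \cref{lemma:no_ort_cmpt} and \cref{lemma:sig_cmpt}, and columns from different components are independent because they occupy disjoint row sets. Your spanning-tree sign propagation is essentially the reorientation step, but you must additionally test every non-tree edge against the propagated signs: if all are consistent, the component is regulable and contributes a zero column of $R$ via the whole-component relation vector (not merely a tree relation); if some edge is inconsistent, the component is irregular and genuinely has full rank. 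With that correction your argument would align with \cref{thm:main_thm}; as written, it proves a weaker statement that fails on orientable components containing cycles.
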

We refer to such a matrix factorization as a \emph{weak column reduction} of $D$. The matrix $R$ is not \emph{column reduced} in the sense of the ``standard algorithm'' \cite[Ch.~VII.1]{edelsbrunner2010computational} as multiple columns may have their pivots (lowest non-zero entry) at the same row. Note that the columns of $V$ with more than one non-zero entry constitute an orthogonal basis for the kernel of $D$.

%First, we introduce some basic definitions with respect to non-branching matrices over $\mathbb{R}$ in subsection \ref{subsec:non_branching_def},
%then we present a method to compute the column reduction 
%when a non-branching matrix $D$ is regular in subsection \ref{subsec:reg_mtx}
%and irregular or singular in subsection %\ref{subsec:irrg_sing_mtx} 
%respectively.
%Finally,
%we present the method to compute the column reduction of a general non-branching matrix in Theorem \ref{thm:D_gen} of subsection \ref{subsec:non_branching_mtx},
%and we show that the column reduction of a $k\times l$ non-branching matrix can be done in $O(kl)$ arithmetic operations.

\iffalse
\begin{remark}
While the exposition in this section is for matrices over the field $\mathbb{R}$, the algorithm applies verbatim to any unital ring $R$ for which $2$ is not a zero divisor. 

% Check: support of a vector, in a remark 

% assuming constant time of arithmetic operations
\end{remark}
\fi

It will be useful to decompose a non-branching matrix $D$ into its \emph{orientable} and \emph{non-orientable} components. The following definition is inspired by the structure of the $k$-th boundary map of an oriented $k$-pseudomanifold. 
\begin{remark}
In this section, we focus on matrices with real-valued entries. However, \cref{thm:weakfactor} extends verbatim to arbitrary unital rings that admit constant-time arithmetic operations. In fact, for rings of characteristic~2, the arguments simplify, as every non-branching matrix can be considered oriented.
\end{remark}
\subsection{Oriented matrices}\label{subsec:non_branching_def}
%It will be useful to decompose a non-branching matrix $D$ into its \emph{orientable} and \emph{non-orientable} components. The following definition is inspired by the structure of the $k$-th boundary map of an oriented $k$-pseudomanifold. 

%Throughout this section,
%we assume $D$ is non-branching.

\begin{definition}\label{def:orit_mtrx}
Let $D\in M_{k\times l}(\mathbb{R})$ be a non-branching matrix.
We say $D$ is \emph{oriented} if each row in $D$ contains at most one $1$ and at most one $-1$.
We say $D$ is \emph{orientable} if it can be made oriented by switching the signs of columns. Otherwise, $D$ is \emph{non-orientable}.
\end{definition}

\begin{example}
Consider the following matrices:
\[
D_1
=
\begin{bmatrix}
1 & 0  & -1 \\
1 & -1  & 0 \\
0 & -1 & 1 \\
0 & 0 & -1\\
\end{bmatrix},
\quad
D_2
=
\begin{bmatrix}
1 & 0  & 0 \\
1 & 1  & 0 \\
0 & -1 & -1 \\
0 & 0 & 1\\
\end{bmatrix},
\quad
D_3
=
\begin{bmatrix}
1 & 0  & -1 \\
1 & 1  & 0 \\
0 & 1 & -1 \\
0 & 0 & 1\\
\end{bmatrix}.
\]
The matrix $D_1$ is oriented. 
The matrix $D_2$ is not oriented but is orientable since we can convert $D_2$ to be an oriented matrix by switching the sign of the second column of $D_2$,
i.e.,
\[
D_2
=
\begin{bmatrix}
1 & 0  & 0 \\
1 & 1  & 0 \\
0 & -1 & -1 \\
0 & 0 & 1\\
\end{bmatrix}
%\longrightarrow 
\xlongrightarrow{\text{multiply the $2$nd column by $-1$}}
\begin{bmatrix}
1 & 0  & 0 \\
1 & -1  & 0 \\
0 & 1 & -1 \\
0 & 0 & 1\\
\end{bmatrix}.
\]
The matrix $D_3$ is non-orientable.
\end{example}

\begin{definition}
\label{def:reg_mtrx}
We say a non-branching matrix $D\in M_{k\times l}(\mathbb{R})$ is 
\begin{enumerate}
    \item \emph{Non-row-singular} if each row in $D$ contains either two or zero non-zero entries,
and we say $D$ is \emph{row-singular} if there is some row in $D$ with exactly one non-zero entry.
    \item \emph{Regular} (resp. \emph{regulable}) if it is both non-row-singular and oriented (resp. \emph{orientable}).
\item \emph{Irregular} if it is both non-row-singular and non-orientable.
\end{enumerate}
\end{definition}

\begin{example}
Consider the matrices $D_4$ and $D_5$ as follows.
\[
D_4
=
\begin{bmatrix}
1 & 0  & -1 \\
1 & 1  & 0 \\
0 & -1 & -1 \\
\end{bmatrix},
\quad
D_5
=
\begin{bmatrix}
1 & 0  & -1 \\
1 & 1  & 0 \\
0 & 1 & -1 \\
\end{bmatrix}.
\]
Both $D_4$ and $D_5$ are non-row-singular,
since each row in $D_4$ and $D_5$ contains $2$ non-zero entries.
% Moreover,
%neither of the two matrices is regular according to Definition \ref{def:reg_mtrx}.
Furthermore, $D_4$ is regulable since it can be oriented by switching the sign of the second column:
\[
D_4
=
\begin{bmatrix}
1 & 0  & -1\\
1 & 1  & 0 \\
0 & -1 & -1 \\
\end{bmatrix}
%\longrightarrow 
\xlongrightarrow{\text{multiply the $2nd$ column by $-1$}}
\begin{bmatrix}
1 & 0  & -1 \\
1 & -1  & 0 \\
0 & 1 & -1 \\
\end{bmatrix}.
\]
On the other hand, $D_5$ is irregular.
\end{example}

\subsection{Regular matrices}\label{subsec:reg_mtx}
In this subsection,  $D$ is assumed to be a regular matrix (\cref{def:reg_mtrx}). Without loss of generality, we shall assume that $D$ has a non-zero entry in every column and every row. In particular, since every column contains precisely two non-zero entries with opposite signs, its transpose $D^T$ is the incidence matrix of a directed graph $G$. The following fundamental result from algebraic graph theory will be important; see e.g. \cite[Theorem 8.3.1]{MR1829620} for a proof.

\begin{lemma}\label{lemma:grph_cmpnt}
Let $D\in M_{k\times l}(\mathbb{R})$ be a regular matrix (without zero columns or zero rows). The transpose matrix $D^T$ can be regarded as an incidence matrix of a directed graph $G$,
which has $(l-\mathrm{rank}(D))$ weakly connected components\footnote{ A weakly connected component in a directed graph is a connected component in the underlying undirected graph.}.
\end{lemma}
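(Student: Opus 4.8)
The plan is to establish the correspondence between regular matrices and incidence matrices of directed graphs, and then invoke the standard rank formula. First I would note that since $D$ is regular and has no zero columns or zero rows, every column of $D$ contains exactly two non-zero entries, one equal to $1$ and one equal to $-1$ (using that $D$ is oriented and non-row-singular). Therefore each column of $D^T$ — equivalently, each row of $D$ — determines an ordered pair of indices: the index of the $+1$ entry (the head) and the index of the $-1$ entry (the tail). This is precisely the data of a directed edge on the vertex set $\{1,\dots,l\}$, so $D^T$ is the incidence matrix of a directed graph $G$ with $l$ vertices and $k$ edges.

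Next I would recall the classical fact from algebraic graph theory relating the rank of an incidence matrix to the number of connected components: if $G$ is a directed graph on $l$ vertices with $c$ weakly connected components, then the rank of its incidence matrix is $l - c$. I would cite \cite[Theorem 8.3.1]{MR1829620} for this. Since the rank of $D^T$ equals the rank of $D$, we get $\mathrm{rank}(D) = l - c$, and hence $c = l - \mathrm{rank}(D)$, which is the claimed number of weakly connected components.

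For completeness I might sketch why the rank formula holds: orient each weakly connected component arbitrarily as a spanning-tree-plus-extra-edges structure; the incidence vectors of the edges of a spanning forest are linearly independent (a dependency would force a cycle in the forest), giving $\mathrm{rank}(D) \ge l - c$; conversely, summing the rows of $D^T$ over the vertices of any single weakly connected component yields zero, producing $c$ independent linear relations among the rows, so $\mathrm{rank}(D) \le l - c$. Combining these bounds gives equality.

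I do not anticipate a serious obstacle here, since this is essentially a dictionary translation followed by a textbook result; the only point requiring a little care is the bookkeeping that a regular matrix with no zero rows genuinely corresponds to a graph (as opposed to a multigraph with loops), i.e., that no row of $D$ has its two non-zero entries in the same column — but this is automatic, since a single column cannot contain two non-zero entries of a fixed row. Loops would only appear in the more general non-row-singular but irregular setting treated later, so in the regular case the correspondence with honest directed graphs is clean.
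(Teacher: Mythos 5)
Your proposal is correct and takes essentially the same route as the paper, which simply regards $D^T$ as the incidence matrix of a directed graph on the $l$ columns of $D$ and cites the same result from algebraic graph theory (Godsil--Royle, Theorem~8.3.1); your sketch of the rank formula is the standard spanning-forest argument. One small slip in the write-up: for a regular matrix it is each \emph{row} of $D$ (equivalently, each column of $D^T$) that contains exactly one $+1$ and one $-1$ — the columns of $D$ may have arbitrarily many non-zero entries — but since your subsequent identification of rows of $D$ with directed edges is exactly right, the argument is unaffected.
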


\begin{remark}
    One can also consider $D^T$ as the $1$-st boundary operator in simplicial homology with real coefficient, in which case it follows that $\beta_0(G) = l - \mathrm{rank}(D^T)$, and $\beta_0$ measures precisely the number of weakly connected components. 
\end{remark}

Let us introduce the following notation. We use the symbol $[k]$ to denote the index set $\{1, 2, \cdots ,k\}$.
%Let $D\in M_{k\times l}(R)$ be a $k\times l$ matrix in $R$.
For $\emptyset\neq I\subset [k]$ and $\emptyset \neq J \subset [l]$,
we let $D(I, J)$ denote the submatrix of $D$ consisting of the rows and columns indexed by $I$ and $J$, respectively. 
We use $D(:, J)$ and $D(I, :)$ to denote $D([k], J)$ and $D(I, [l])$, respectively, 
and $m$:$n$ denotes the indices $\{m, m+1,\cdots, n\}$ when $m<n$. 
We also use $D(:, -1)$ and $D(-1, :)$ to denote the last column and last row of $D$ respectively.
Let $G$ denote the graph associated to $D^T$, and denote its weakly connected components $G_i$ for $1\leq i\leq r$. For a connected component $G_i$, let $C_i$ denote the column indices in $D$ corresponding to the vertices in $G_i$. That is, $(D(:, C_i))^T$ is the incidence matrix associated with $G_i$ for each $1\leq i\leq r$. Furthermore, for two sets of column indices $C_i$, $C_j$,
we denote by $C_i\#C_j$ the concatenation of $C_i$ and $C_j$,
i.e.,
$C_i\#C_j:=(c_i^{1}, \cdots, c_i^{l_i}, c_j^1, \cdots, c_{j}^{l_j})$,
here $C_i=(c_i^1,\cdots, c_i^{l_i})$, 
% (resp. $C_j=(c_j^1,\cdots, c_j^{l_j})$),
% $l_i$ is the cardinality of $C_i$,
and similarly for $C_j$.

\begin{lemma}\label{lem:simpleprod}
    The sum of column vectors $\sum_{j\in C_i} D(:,j)=0$ 
    for each $C_i$ with $1\leq i\leq r$.
    
    \begin{comment}
        
    That is, 
    \[D(:,C_i) \cdot 
\begin{bmatrix}
1 & 0 & \cdots & 1\\
0 & \ddots & \cdots & 1\\
0 & 0 & \ddots & \vdots\\
0 & 0 & \cdots & 1
\end{bmatrix} = 0.\]
\end{comment}
\end{lemma}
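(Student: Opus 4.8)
The plan is to use the graph-theoretic interpretation of $D^T$ as an incidence matrix. Recall that $D$ is regular, so every column of $D$ has exactly two non-zero entries with opposite signs; equivalently, each row of $D^T$ has exactly one $+1$ and one $-1$, which is precisely the defining property of the (signed vertex-edge) incidence matrix of a directed graph $G$. Here the columns of $D$ are indexed by the vertices of $G$ and the rows of $D$ are indexed by the edges of $G$. A weakly connected component $G_i$ corresponds to the vertex set $C_i$, and $D(:, C_i)^T$ is the incidence matrix of $G_i$.

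First I would fix a connected component $G_i$ and examine a single row $e$ of $D(:,C_i)$, i.e. an edge of $G_i$. There are two cases. If the edge $e$ lies entirely within $G_i$ (both endpoints are vertices of $G_i$), then that row of $D$ restricted to columns $C_i$ has exactly one $+1$ and one $-1$, so the entries sum to zero. If the edge $e$ has at most one endpoint in $G_i$ — but this cannot happen: since $G_i$ is a \emph{connected component}, every edge incident to a vertex of $G_i$ has both endpoints in $G_i$, and every edge not incident to any vertex of $G_i$ contributes an all-zero row to $D(:,C_i)$, which also sums to zero. Hence every row of $D(:,C_i)$ sums to zero, which is exactly the assertion $\sum_{j\in C_i} D(:,j) = 0$.

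The one subtlety to handle carefully is the standing assumption that $D$ has no zero rows: a priori a row of $D$ could have both its non-zero entries in columns outside $C_i$, giving a genuinely zero row of $D(:,C_i)$; this is fine for the sum being zero, but I should note it explicitly so the argument covers all rows. I would also remark that this is the row-space version of the familiar fact that the all-ones vector on each component's vertex set lies in the kernel of the incidence matrix $D(:,C_i)^T$ — phrased for $D$ itself, summing the corresponding columns of $D$ gives the zero vector.

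There is no real obstacle here; the statement is essentially a restatement of a standard fact from algebraic graph theory (cf. \cref{lemma:grph_cmpnt} and the cited \cite[Theorem 8.3.1]{MR1829620}), and the only thing requiring a moment's care is the bookkeeping over the three types of rows (edges inside $G_i$, edges outside $G_i$, and — vacuously — edges straddling the boundary, which do not exist for a connected component). The proof is therefore a short case analysis on rows, and I would present it in two or three sentences.
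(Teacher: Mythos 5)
Your proof is correct and is essentially the paper's argument: the paper proves this lemma in one line ("since every row contains precisely one $+1$ and one $-1$, the result is immediate"), and your case analysis on rows — edges inside $G_i$ contribute a $+1$ and a $-1$, edges in other components contribute a zero row of $D(:,C_i)$, and straddling edges cannot exist because $C_i$ is a connected component — is just the careful spelling-out of that same observation. No changes needed.
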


\begin{proof}
    Since every row contains precisely one +1 and one -1, the result is immediate.
\end{proof}

%We denote by $\widehat{D}\coloneqq D(:, C_1\#\cdots\#C_r)$,
%which is a column rearrangement of $D$.

%The action of rearranging the  matrix $D$ is equivalent to finding connected components of a directed graph $G$.
%There are several algorithms that can be applied, %to find connected components in a directed graph $G$,
%two common used ones are the Depth-First Search algorithm (DFS) and the Breadth-First Search algorithm (BFS),
%we refer to \cite{cormen2022introduction, MR3890038} for more %details about algorithms to find connected components.
%In this paper,
%we first construct the adjacency list from the incidence matrix $D^T$,
%the time complexity is $O(kl)$ since we need to scan over all %vertices and all edges.
%We then apply the DFS to rearrange columns of $D$,
%the time complexity of DFS is $O(k+l)$.
%Hence the time complexity of rearranging columns of $D$ is $O(kl)$.
%We exhibit the ColumnRearrange\_Regular in Algorithm \ref{alg:col_ra_reg},
%which is modified from \cite[\textrm{Algorithm 8.1}]{MR3890038}.

% \input{alg_col_ra_regular_file.tex}

\begin{lemma}\label{lem:Ul}
%Let $D$ be a $k\times l$ regular matrix without zero columns. 
Let $U_l$ be the $l\times l$ upper-triangular matrix with $1$'s on the diagonal, and with $U_l(j, c_i^{l_i}) =1$ for all $j\in C_i$. 
    Then, the zero columns of $DU_l$ are given by the following column indices $I = \{c_1^{l_1}, \ldots, c_r^{l_r}\}$. Furthermore, the non-zero columns of $DU_l$ are linearly independent. 
\end{lemma}
\begin{proof}
That $(DU_l)(:,I)$ is the zero matrix, is immediate from \cref{lem:simpleprod}. Furthermore, note that $(DU_l)(:,[l]\setminus I) = D(:, [k]\setminus I)$. Hence, the columns in $I$ are precisely the zero columns. According to \cref{lemma:grph_cmpnt},
the rank of $D$ is equal to $(l-r)$. 
We conclude that
all the non-zero columns of $DU_l$ are linearly independent.
\end{proof}
\begin{example}
\label{ex:LemmaU}
Consider the following regular matrix $D$:
\[D
=
\begin{bNiceMatrix}[
  first-row,code-for-first-row=\scriptstyle,
  first-col,code-for-first-col=\scriptstyle,
]
& [1] & [2] & [3] & [4] & [5] & [6] & [7]\\
[13] & 1 & 0 & -1 & 0 & 0 & 0 & 0\\
[35] & 0 & 0 & 1 & 0 & -1 & 0 & 0\\
[42] & 0 & -1 & 0 & 1 & 0 & 0 & 0\\
[51] & -1 & 0 & 0 & 0 & 1 & 0 & 0\\
[46] & 0 & 0 & 0 & 1 & 0 & -1 & 0\\
[47] & 0 & 0 & 0 & 1 & 0 & 0 & -1\\
\end{bNiceMatrix}.\] 
We observe that the associated directed graph $G$ has two connected components, and the corresponding sets of vertices are $C_1=\{1, 3, 5\}$ and $C_2=\{2, 4, 6, 7\}$; see \cref{fig_example}. From \cref{lem:Ul}, we obtain 
\[
U_7
=
\begin{bmatrix}
1 & 0 & 0 & 0 & 1 & 0 & 0\\
0 & 1 & 0 & 0 & 0 & 0 & 1\\
0 & 0 & 1 & 0 & 1 & 0 & 0\\
0 & 0 & 0 & 1 & 0 & 0 & 1\\
0 & 0 & 0 & 0 & 1 & 0 & 0\\
0 & 0 & 0 & 0 & 0 & 1 & 1\\
0 & 0 & 0 & 0 & 0 & 0 & 1\\
\end{bmatrix},\qquad\qquad
R=DU_7=
\begin{bmatrix}
1 & 0 & -1 & 0 & 0 & 0 & 0\\
0 & 0 & 1 & 0 & 0 & 0 & 0\\
0 & -1 & 0 & 1 & 0 & 0 & 0\\
-1 & 0 & 0 & 0 & 0 & 0 & 0\\
  0 & 0 & 0 & 1 & 0 & -1 & 0\\
 0 & 0 & 0 & 1 & 0 & 0 & 0\\
\end{bmatrix}.\] 
\end{example}
\begin{figure}[H]
  %\centering
    \begin{tikzpicture}[
       decoration = {markings,
                     mark=at position .5 with {\arrow{Stealth[length=2mm]}}},
       dot/.style = {circle, fill, inner sep=1.6pt, node contents={},
                     label=#1},
every edge/.style = {draw, postaction=decorate}
                        ]
    % Define the length of the side of the triangle
    \def\side{2.251666}

    % Draw the equilateral triangle centered at (0,0)
    %\filldraw[fill=yellow!40] (90: \side) -- (210: \side) -- (330: \side) -- cycle;

    %\draw (0, 0) -- (\side, 0) -- (1/2 * \side, \side * 0.8660254) -- cycle;
    %\filldraw[fill=yellow!40] (\side, 0) -- (1/2 * \side, \side * 0.8660254) -- (\side * 3/2, \side * 1.732051 / 2) -- cycle;
    %\filldraw[fill=yellow!40] (\side, 0) -- (2 * \side, 0) -- (\side * 3/2, \side * 1.732051 / 2) -- cycle;
    %\filldraw[fill=yellow!40] (\side * 2, 0) -- (5/2 * \side, \side * 0.8660254) -- (\side * 3/2, \side * 1.732051 / 2) -- cycle;
    %\filldraw[fill=yellow!40] (5/2 * \side,  \side * 0.8660254) -- (2 * \side, -0) -- (3 * \side, -0) -- cycle;

    % Connect the center with all three vertices
    %\draw (0,0) -- (90: \side);
    %\draw (0,0) -- (210: \side);
    %\draw (0,0) -- (330: \side);

    %\draw (0, -1/2) -- (3, 4);

    % Label the vertices and the center
    \node (1) at (1/2 * \side,  \side * 0.8) [dot=$1$];
    \node (2) at (3/2 * \side,  \side * 0.8) [dot=$2$];
    \node (6) at (5/2 * \side,  \side * 0.8) [dot=$6$];
    \node (3) at (0, -0.3) [dot=$3$];
    \node (5) at (\side, -0.3) [dot=$5$];
    \node (7) at (2 * \side, -0.3) [dot=$7$];
    \node (4) at (2 * \side, 0.45 * \side) [dot=$4$];
    %\node at (3 * \side, -0.3) {1};
    %\node at (-0.2,0) [below right] {4}; % You can adjust the position of this label

    % Optionally, mark the center
    %\fill (0, 0) circle (2pt);  % Vertex 4
    %\fill (1/2 * \side, \side * 0.8660254) circle (2pt); % Vertex 1
    %\fill (3/2 * \side, \side * 0.8660254) circle (2pt);
    %\fill (5/2 * \side, \side * 0.8660254) circle (2pt);
    %\fill (\side, 0) circle (2pt);
    %\fill (2 * \side, 0) circle (2pt);
    %\fill (2 * \side, 0.57735 * \side) circle (2pt);
    %\fill (3 * \side, -0) circle (2pt);

    \path (1) edge (3);
    \path (3) edge (5);
    \path (5) edge (1);
    
    \path (4) edge (2);
    \path (4) edge (6);
    \path (4) edge (7);
    
\end{tikzpicture}
\caption{The directed graph $G$  associated to the incidence matrix $D^T$ in \cref{ex:LemmaU}.}
\label{fig_example}
\end{figure}

We say that a square matrix $E$ is a \emph{flag matrix} if $E$ is a diagonal matrix with diagonal entries in $\{-1,1\}$. Summarized, we have the following proposition.

\begin{proposition}
\label{prop:regular}
    Let $D$ be a $k\times l$ regulable matrix. Then, $R=DV$ is a weak column reduction, where $V = EU_l$ for $E$ a flag matrix such that $DE$ is oriented. 
  %Furthermore, the matrices $R$, $U_l$, and $E$ can be computed in $O(k\,l)$ arithmetic operations.
\end{proposition}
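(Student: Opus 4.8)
The plan is to reduce \cref{prop:regular} to the already-established \cref{lem:Ul} by first orienting $D$. Since $D$ is regulable, by \cref{def:reg_mtrx} it is non-row-singular and orientable, so by \cref{def:orit_mtrx} there exists a flag matrix $E$ (i.e., a $\pm1$-diagonal matrix) such that $DE$ is oriented; concretely, $E$ records for each column whether its sign must be flipped. Note $DE$ is then regular: each of its rows still has exactly two or zero non-zero entries (multiplying a column by $-1$ does not change which entries are non-zero), and it is now oriented, hence regular in the sense of \cref{def:reg_mtrx}. As in \cref{subsec:reg_mtx}, we may assume $D$ (hence $DE$) has no zero columns or zero rows.

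First I would apply \cref{lem:Ul} to the regular matrix $DE$: there is an $l\times l$ upper-triangular matrix $U_l$ with $1$'s on the diagonal, whose off-diagonal $1$'s are exactly the entries $U_l(j,c_i^{l_i})=1$ for $j\in C_i$, such that the zero columns of $(DE)U_l$ are precisely indexed by $I=\{c_1^{l_1},\dots,c_r^{l_r}\}$ and all non-zero columns of $(DE)U_l$ are linearly independent. Setting $V:=EU_l$ and $R:=DV=(DE)U_l$, it remains to verify the four defining properties of a weak column reduction of $D$ from \cref{thm:weakfactor}. The identity $R=DV$ is immediate. For upper-triangularity of $V$ with entries in $\{-1,0,1\}$: $E$ is diagonal with $\pm1$ entries and $U_l$ is upper-triangular with $0/1$ entries, so the product $EU_l$ is upper-triangular, and its $(a,b)$ entry equals $E(a,a)U_l(a,b)\in\{-1,0,1\}$. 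Linear independence of the non-zero columns of $R$ is exactly the conclusion of \cref{lem:Ul} applied to $DE$.

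Next I would check the support condition and the characterization of the kernel columns. A column of $V=EU_l$ is a scaled column of $U_l$, so it has the same support; by construction of $U_l$ in \cref{lem:Ul}, the columns with more than one non-zero entry are exactly those indexed by $\{c_i^{l_i}:1\le i\le r\}$, and the support of such a column is $C_i$. Since the $C_i$ are the vertex sets of the distinct connected components $G_i$ of the graph associated to $(DE)^T$, they are pairwise disjoint, and together with the singleton supports of the remaining (unit) columns this gives the mutual disjointness claim. Finally, \cref{lem:Ul} states the zero columns of $R=(DE)U_l$ are precisely those indexed by $I=\{c_1^{l_1},\dots,c_r^{l_r}\}$, which is exactly the set of indices where $V$ has a column with more than one non-zero entry; this establishes the ``if and only if''. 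Hence $R=DV$ with $V=EU_l$ is a weak column reduction of $D$.

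The only genuine point requiring care — and the place I expect a referee to look closely — is the legitimacy of transferring \cref{lem:Ul} from $DE$ to $D$: one must be sure that multiplying columns by $\pm1$ preserves ``regular'' (it does, since sign changes preserve the zero pattern and the oriented property is exactly what $E$ is chosen to achieve) and that the graph $G$, the components $C_i$, and hence $U_l$ are well-defined relative to $DE$ rather than $D$, so all references to $C_i$ above implicitly mean the components of the graph of $(DE)^T$. Everything else is bookkeeping about products of a diagonal $\pm1$ matrix with an upper-triangular $0/1$ matrix.
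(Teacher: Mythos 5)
Your proposal is correct and follows exactly the route the paper intends: \cref{prop:regular} is stated there as an immediate summary of \cref{lem:Ul} applied to the oriented matrix $DE$, with $V=EU_l$, and your verification of the upper-triangularity, linear independence, support, and kernel-column conditions is the same bookkeeping. The only caveat is your closing claim that the singleton supports of the unit columns are also disjoint from the $C_i$'s — a unit column indexed by $j\in C_i\setminus\{c_i^{l_i}\}$ has support $\{j\}\subset C_i$, so strictly only the columns with more than one non-zero entry have mutually disjoint supports, which is the property actually asserted in Result~A and used later in the paper.
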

\begin{comment}
\begin{remark}\label{rmk:regulable}
When the matrix $D$ is regulable while not regular,
there exists a diagonal matrix $E$ with diagonal entries being $\pm 1$ such that $DE$ is regular,
%we refer to the matrix $E$ as \emph{reorientation}.
The column reduction of $D$ can be then derived from $DE\,T_{l}$.
We refer to the matrix $E$ as a \emph{flag matrix}.
\end{remark}
\end{comment}

\subsection{Irregular matrices}\label{subsec:irreg_mtx}

In the case that $D\in M_{k\times l}(\mathbb{R})$ is irregular as defined in \cref{def:reg_mtrx},
the matrix $D^T$ can no longer be regarded as the incidence matrix of any directed graph. However, 
if we take the absolute value of all entries in $D$, denoting the resulting matrix by $|D|$,
then $\left(|D|\right)^T$ is the incidence matrix of an \emph{undirected} graph $G$ with vertices given by the columns of $D$ as in the previous section. The incidence matrix for a graph with loops is given in \cref{def:incd_mtx_undir}.

\begin{lemma}\label{lemma:no_ort_cmpt}
Let $D\in M_{k\times l}(\mathbb{R})$ be an irregular matrix.
Suppose the matrix $D$ has no zero-columns and zero-rows.
Then, the matrix $D$ has full column rank if the undirected graph $G$ associated with $\left(|D|\right)^T$ has only one connected component.
\end{lemma}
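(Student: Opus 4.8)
The plan is to argue by contradiction: suppose $D$ is irregular, has no zero rows or columns, the associated undirected graph $G = (|D|)^T$ is connected, yet $D$ does not have full column rank. Then there is a nonzero vector $v \in \mathbb{R}^l$ with $Dv = 0$. My first step is to translate the linear relation into a combinatorial statement on $G$. Each row of $D$ corresponds to an edge of $G$: a row with two nonzero entries at columns $a$ and $b$ is an edge between vertices $a$ and $b$, and the equation $(Dv)_{\text{row}} = 0$ forces a relation $\pm v_a \pm v_b = 0$, i.e.\ $v_a = \pm v_b$ with the sign dictated by whether the two entries in that row have equal or opposite signs. Because $G$ is connected and $D$ has no zero columns, every vertex is touched by some edge, so $v$ is determined up to a single global scalar along any spanning tree; in particular $v_a \neq 0$ for every $a$ (if one coordinate vanished, connectivity would force all of them to vanish, contradicting $v \neq 0$).

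The next step is to extract the obstruction from non-orientability. Since $D$ is irregular, it is non-orientable, so no choice of column signs makes every row have at most one $+1$ and one $-1$. I would like to phrase this as: there is a closed walk in $G$ along which the product of the ``sign types'' of the traversed edges is $+1$ rather than $-1$ — equivalently, $G$ carries a cycle that is ``non-orientable'' in the sense that the forced sign relations compose to $v_a = -v_a$ around it. Concretely, fix a spanning tree $T$ of $G$; orienting columns so that all tree edges become oriented ($+1/-1$) is always possible. Non-orientability then means some non-tree edge $e$, together with the tree path between its endpoints, forms a cycle whose composite sign relation is inconsistent, i.e.\ going around the cycle forces $v_a = -v_a$ for the coordinates involved. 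Combined with the previous paragraph's conclusion that $v$ satisfies all these relations and is nowhere zero, this gives $v_a = -v_a$ with $v_a \neq 0$, the desired contradiction. Hence $Dv = 0$ implies $v = 0$, and $D$ has full column rank.

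I expect the main obstacle to be making the ``non-orientability yields an inconsistent cycle'' step fully precise and clean. The intuitive picture is clear — orientability of $D$ is exactly orientability of a signed/gain graph, and a connected signed graph is balanced (orientable) iff every cycle has positive sign — but I would need to set up the bookkeeping carefully: define the sign $s_{\text{row}} \in \{+1,-1\}$ of each two-entry row as the negative of the product of its two nonzero entries (so that an oriented row has $s = +1$), note that switching a column sign flips $s$ on exactly the incident rows, and observe that the relation imposed by a row is $v_a = s_{\text{row}} \cdot v_b$. Then orientability is precisely the existence of a column sign vector making all $s_{\text{row}} = +1$, which by the standard theory of balanced signed graphs (or a direct spanning-tree argument) is equivalent to every cycle of $G$ having sign product $+1$. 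Non-orientability therefore produces a cycle with sign product $-1$, and chaining the relations $v_{a_0} = s_1 v_{a_1} = s_1 s_2 v_{a_2} = \cdots$ around that cycle yields $v_{a_0} = (-1) v_{a_0}$. Everything else — that $v$ is nowhere vanishing, that $G$ connected forces propagation — is routine once this signed-graph dictionary is in place. I would either cite the balanced signed graph characterization or, since the paper seems to prefer self-contained graph arguments, include a two-line spanning-tree proof of it.
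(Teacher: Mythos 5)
Your proposal is correct, and it starts exactly as the paper does: assume a nonzero $v$ with $Dv=0$, observe that each row (which has exactly two nonzero entries, since an irregular matrix is non-row-singular and there are no zero rows) imposes $v_a=\pm v_b$, and use connectedness of $G$ to conclude that all $|v_j|$ are equal and nonzero, i.e.\ after rescaling $v\in\{\pm 1\}^l$. Where you diverge is the final step, and this is where the paper is shorter: instead of extracting a negative cycle, the paper turns the kernel vector itself into a column switching. Writing $E=\diag(v)$ (so $Ev=\boldsymbol{1}_l$), one gets $DE^{-1}\boldsymbol{1}_l=0$, i.e.\ every row of the sign-switched matrix $DE^{-1}$ sums to zero; since each row has exactly two entries in $\{-1,1\}$, each row must consist of one $+1$ and one $-1$, so $DE^{-1}$ is oriented and $D$ is orientable --- contradicting irregularity. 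This completely bypasses the step you yourself flagged as the delicate one, namely ``non-orientability of a connected signed graph yields a cycle of negative sign,'' which in your route has to be proved by a spanning-tree argument or imported from the theory of balanced signed graphs before you can chain $v_{a_0}=-v_{a_0}$ around the cycle. Your argument is sound once that balance lemma is in place (and the nowhere-vanishing of $v$ and the multigraph bookkeeping are handled as you indicate), and it has the conceptual merit of making the signed-graph/gain-graph picture explicit; but it buys this at the cost of an extra lemma, whereas the paper's observation that ``$\boldsymbol{1}$ in the kernel of a non-row-singular $\{0,\pm1\}$ matrix forces each row to be $(+1,-1)$'' gives the contradiction in one line.
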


\begin{proof}
Suppose $D$ does not have full column rank. Then,
there is a non-zero vector $v=(v_1,\cdots, v_l)^T\in \mathbb{R}^l$ such that $Dv=0$. 
%Without loss of generality,
Notice that the vertices corresponding to columns  $j_1$ and $j_2$ are connected in $G$ if and only if there exists some $i$ such that $D(i,j_1)D(i,j_2) \neq 0$. 
Assume that $v_1=a$. This forces that $v_j$ is either $a$ or $-a$, where $j$ is any vertex connected to vertex $v_1$ in $G$. Since $G$ is connected, we must have that $v\in \{\pm a\}^l$. We may assume that $a=1$. 
Thus there exists a diagonal matrix $E$ with diagonal entries being either $1$ or $-1$ such that the product $Ev=\boldsymbol{1}_l$,
and therefore $Dv=DE^{-1}\boldsymbol{1}_l=0$.
However, this contradicts that $D$ is irregular and therefore non-orientable. 
\end{proof}

\begin{remark}
    The $q$-th boundary matrix of a connected non-orientable $q$-dimensional pseudomanifold $M$ is irregular. We recover the well-known identity $H_q(M; \mathbb{R}) = 0$. So for instance, $H_2(\mathbb{RP}^2;\mathbb{R}) = 0.$
\end{remark}

% \subsection{Column reduction of singular matrices}
\subsection{Row-singular matrices}\label{subsec:singular_mtx}
If $D \in M_{k \times l}(\mathbb{R})$ is row-singular, 
we can interpret $(|D|)^T$ as the incidence matrix of an undirected graph with loops,
recall that $|D|=(|d_{ij}|)$ for $D=(d_{ij})$.

%an oriented \emph{hypergraph} $H$.

%It is well-known that an unoriented hypergraph can be represented as a bipartite graph $T$, where one partition consists of vertices corresponding to hyperedges and the other to the vertices of the hypergraph. Edges in $T$ connect a hyperedge to the vertices it contains. In $H$, two vertices are said to be connected if they belong to the same connected component of $T$. The vertices of $T$ can thus be grouped into connected components. For further details on oriented hypergraphs, see Appendix~\ref{appdx_hg} and references~\cite{MR4433789, MR4236395, MR2954488}.

\begin{lemma}\label{lemma:sig_cmpt}
Let $D\in M_{k\times l}(\mathbb{R})$ be a row-singular matrix without zero-columns and zero-rows.
Let $G$ be the undirected graph with loops whose incidence matrix is equal to $|D|^T$.
%We can regard $D^T$ as an incidence matrix with respect to a hypergraph $H$.
The matrix $D$ has full column rank if the associated graph $G$ has only one connected component.
\end{lemma}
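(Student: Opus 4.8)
The plan is to mirror the argument of \cref{lemma:no_ort_cmpt}, but now accounting for the fact that $G$ is a graph with loops and $D$ may be row-singular. Suppose, for contradiction, that $D$ does not have full column rank, so there is a non-zero vector $v = (v_1, \ldots, v_l)^T \in \mathbb{R}^l$ with $Dv = 0$. As in the irregular case, the key structural fact is that two columns $j_1 \neq j_2$ index adjacent vertices in $G$ precisely when some row $i$ has $D(i,j_1)D(i,j_2) \neq 0$; since $D$ is non-branching, such a row has exactly these two non-zero entries, each $\pm 1$, so the equation $(Dv)_i = 0$ reads $\pm v_{j_1} \pm v_{j_2} = 0$, forcing $v_{j_2} = \pm v_{j_1}$. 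Propagating this along paths in $G$, and using that $G$ is connected, we conclude that if $v_1 = a$ then $v \in \{\pm a\}^l$, and $a \neq 0$ since $v \neq 0$.

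**Exploiting the loop.** The new ingredient, compared to \cref{lemma:no_ort_cmpt}, is that $D$ being row-singular means $|D|^T$ has at least one loop: there is a row $i_0$ of $D$ with a single non-zero entry, say in column $j_0$, with $D(i_0, j_0) = \pm 1$. The corresponding equation $(Dv)_{i_0} = 0$ gives $\pm v_{j_0} = 0$, hence $v_{j_0} = 0$. But $v_{j_0} \in \{\pm a\}$ with $a \neq 0$, a contradiction. Therefore $D$ has full column rank.

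**Main obstacle.** The only subtlety I anticipate is being careful about the case where a row of $D$ has two non-zero entries that lie in the \emph{same} column — i.e. whether "at most two non-zero entries per row" should be read with multiplicity, which would correspond to a loop in a directed-graph sense. In the non-branching setting entries are in $\{-1,0,1\}$ and a row is indexed by distinct columns, so a "row with one non-zero entry" is genuinely the loop case and the argument above applies cleanly; I would state this explicitly to avoid ambiguity. A second minor point is to confirm that the connectivity of $G$ lets us reach \emph{every} column index from column $1$ by a path, so that the sign-propagation argument covers all coordinates of $v$ and in particular reaches some column incident to a loop. Since $G$ has a single connected component and every column is a vertex (no zero columns) while every row contributes at least one edge or loop (no zero rows), this is immediate; I would simply remark that the loop-bearing vertex $j_0$ is connected to vertex $1$, so $v_{j_0} \in \{\pm a\}$, completing the contradiction.
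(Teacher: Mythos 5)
Your proposal is correct and follows essentially the same route as the paper: propagate $|v_1|=\cdots=|v_l|$ along the connected graph $G$ exactly as in \cref{lemma:no_ort_cmpt}, then use a row with a single non-zero entry (a loop) to force one coordinate, and hence all of $v$, to vanish. The only cosmetic difference is that you phrase it as a contradiction while the paper argues directly that $Dv=0$ implies $v=0$.
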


\begin{proof}
%Suppose the oriented hypergraph $H$ is connected,
To prove that $D$ has full column rank,
it is enough to prove that $Dv=0$ if and only if $v=(v_1,\cdots, v_l)\in R^l$ is the zero vector.
Since the undirected graph $G$ is connected by assumption,
we obtain that 
 $|v_1|=\cdots=|v_l|$ as in the proof of \cref{lemma:no_ort_cmpt}.
%for any two components $v_i, v_j$ of the vector $v$ either $v_i=v_j$ or $v_i=-v_j$.
However,
since $D$ is row-singular,
there must be some row containing only one non-zero entry, and thus $v_j=0$ for some $j$. It follows that $v=(v_1,\cdots, v_l)=0$.
\end{proof}

\begin{remark}
    When $D$ is row-singular, each edge in $G$ contains either one or two vertices. 
    %Considering our construction of the bipartite graph $T$ above, 
    we see that the loops in $G$ will not be relevant in the computation of connected components. Therefore, we can ignore the loops, and find the connected components of $G$ by running any algorithm for computing connected components in graphs.
    % This shows that the connected components of $G$ can be found in $O(k+l)$ arithmetic operations as well. 
\end{remark}

\subsection{Wrapping up the proof of  \cref{thm:weakfactor}}\label{subsec:non_branching_mtx}
To prove \cref{thm:weakfactor}, we first regard $|D|^T$ as the incidence matrix of an undirected graph $G$ (with loops).
Decomposing $G$ into its connected components, we write 
\[
G = \left(\bigcup_{i=1}^r G_i\right) \bigcup \left(\bigcup_{i=1}^q \bar{G}_i\right)\bigcup \left(\bigcup_{i=1}^s \widetilde{G}_i\right),
\]
and correspondingly for the column indices
\[
C = \left(\bigcup_{i=1}^r C_i\right) \bigcup \left(\bigcup_{i=1}^q \bar{C}_i\right)\bigcup \left(\bigcup_{i=1}^s \widetilde{C}_i\right).\]
That is, $D(:,C_i)$ is the incidence matrix for $G_i$, and likewise for the other components. The notation is chosen such that $D(:, C_i)$ is a regulable matrix, $D(:, \bar{C}_i)$ is irregular, and $D(:, \widetilde{C}_i)$ is row-singular. As above, we let $(c_i^{1}, \ldots, c_i^{l_i})$ denote the ordered list of column indices in $C_i$. 
Collecting all our results, we now prove the following theorem, from which \cref{thm:weakfactor} is an immediate corollary. 
\begin{theorem}\label{thm:main_thm}
Let $D\in M_{k\times l}(\mathbb{R})$ be a non-branching matrix without zero columns. Then,
$R = D(EU)$
is a weak column reduction of $D$, where $E$ is a flag matrix such that $(DE)(:,C_i)$ is oriented (and thus regular) for each regulable component $C_i$, and $U$ is the upper-triangular matrix with $1$'s on the diagonal and $U(c_i^{j}, c_i^{1}) = 1$ for all $1\leq i \leq r$.
%Moreover, $E$, $U$ and $R$ can be computed in $O(k\, l)$ arithmetic operations.
\end{theorem}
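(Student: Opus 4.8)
The plan is to reduce the general statement to the three cases already treated — regulable, irregular, and row-singular components — by exploiting the block structure of $D$ with respect to the connected components of the graph $G$ associated to $|D|^T$. The first observation is that after the (fixed) column permutation grouping columns by component, $D$ (equivalently $DE$) becomes block-diagonal: since a row of $D$ has at most two non-zero entries, and two columns sharing a non-zero row must lie in the same component of $G$, every row of $D$ has its support inside a single block $C_i$, $\bar C_i$, or $\widetilde C_i$. Hence it suffices to produce a weak column reduction of each block separately and assemble the results; the matrix $EU$ inherits the block-diagonal structure on the $U$-part (only the regulable blocks get off-diagonal $1$'s, all in the column indexed by $c_i^1$), and $E$ is the diagonal flag matrix chosen, block by block, to orient each regulable $D(:,C_i)$ — such $E$ exists by definition of regulable, and on the irregular and row-singular blocks we take $E$ to be the identity (or any flag matrix; it is irrelevant there).

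Next I would verify the three defining properties of a weak column reduction block by block. For a regulable block $C_i$: by \cref{prop:regular} (equivalently \cref{lem:Ul} applied to the oriented matrix $(DE)(:,C_i)$), the product $(DE)(:,C_i)\,U_{l_i}$ has exactly one zero column, namely the one indexed by $c_i^{l_i}$ — wait, here the theorem is stated with $U(c_i^j,c_i^1)=1$, i.e. the ``summing'' column is the \emph{first} rather than the last; this is a harmless reindexing of \cref{lem:Ul}, and I would note that $\sum_{j\in C_i}D(:,j)=0$ by \cref{lem:simpleprod} still holds, so collapsing into the column $c_i^1$ produces the unique zero column there, while the remaining columns are unchanged and linearly independent by the rank count in \cref{lemma:grph_cmpnt}. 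For an irregular block $\bar C_i$: by \cref{lemma:no_ort_cmpt} the block has full column rank, so no column of $U$ needs more than its diagonal entry, $R$ restricted to this block equals $D(:,\bar C_i)$ and has linearly independent (all non-zero) columns. For a row-singular block $\widetilde C_i$: by \cref{lemma:sig_cmpt} the same conclusion holds. Thus within each block, the ``$i$-th column of $V$ has more than one non-zero entry iff the $i$-th column of $R$ is zero'' matches up: it happens exactly once per regulable block and never in the other two types.

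Finally I would check the two global properties that are not purely blockwise. Upper-triangularity of $V=EU$: $E$ is diagonal, and $U$ is upper-triangular with $1$'s on the diagonal by construction (each off-diagonal $1$ sits in a column $c_i^1$, and we may order the columns within each $C_i$ so that $c_i^1$ comes last among $C_i$, or simply observe that $U(c_i^j,c_i^1)=1$ with $j$ ranging over $C_i$ only creates entries that can be placed above the diagonal under a suitable global ordering consistent with the per-component grouping) — so I would make the ordering convention explicit up front. Disjointness of the support indices of the columns of $V$: the only columns with more than one non-zero entry are the columns $c_i^1$ for regulable blocks, whose support is contained in $C_i$, and these index sets are pairwise disjoint since they belong to distinct connected components; all other columns of $V$ are standard basis vectors (up to sign), whose singletons are trivially disjoint from everything and from each other. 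Assembling these observations gives that $R=D(EU)$ is a weak column reduction, and \cref{thm:weakfactor} follows by forgetting the explicit description of $E$ and $U$. I expect the only real friction to be bookkeeping: fixing one global column ordering that simultaneously makes $U$ upper-triangular and is compatible with the component decomposition, and reconciling the ``collapse into $c_i^1$'' convention here with the ``collapse into $c_i^{l_i}$'' convention of \cref{lem:Ul}; neither is deep, but both must be stated carefully to make the block-diagonal assembly airtight.
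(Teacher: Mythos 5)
Your proposal is correct and follows essentially the same route as the paper: decompose $D$ according to the connected components of the graph associated with $|D|^T$, note that distinct components share no rows so their columns are automatically linearly independent, and invoke \cref{lem:Ul}/\cref{prop:regular} on the regulable blocks and \cref{lemma:no_ort_cmpt}, \cref{lemma:sig_cmpt} on the irregular and row-singular blocks. Your remark about the collapse column is also apt: to keep $V$ upper-triangular the summing column must be the last index $c_i^{l_i}$ of each regulable component, as in \cref{lem:Ul} and \cref{prop:pers_lap}, so the entry $U(c_i^{j}, c_i^{1})=1$ in the theorem statement should be read as $U(c_i^{j}, c_i^{l_i})=1$.
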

\begin{proof}
Observe that if $\widehat{G}_i$ and $\widehat{G}_j$ are any two components of $G$, then there is no row $k$ such that $D(k, \widehat{C}_i) \neq 0$ and $D(k, \widehat{C}_j) \neq 0$. Hence, columns coming from different components of $G$ are automatically linearly independent. By \cref{lemma:no_ort_cmpt} and \cref{lemma:sig_cmpt}, the restriction of $D$ to the irregular and row-singular components is of maximal column rank, and therefore need no further reduction. It now follows from \cref{lemma:sig_cmpt}, that the stated decomposition is a weak column reduction.

%Constructing $G$ takes $O(k\, l)$ and finding the decomposition of $G$ into its components can be done in $O(k+l)$ time and space using any standard algorithm for finding connected components. Once this is done, we can iterate over the components to determine which ones are row-singular. For each non-row-singular component $C$, we can then determine $D(:,C)$ is orientable (and orient it) in $O(k\, |C_i|)$; see \cref{prop:orient}. In conclusion, the regular columns and $E$ can be extracted in $O(k\,l)$. The matrix $U$ is now straightforward to construct from the regular column indices. 
\end{proof}

\subsection{Algorithms and running times}
In this section we analyze the time complexity of computing weak column reduction.
Since a non-branching matrix $D$ is sparse,
we assume that the input matrix $D$ is in a format of sparse matrix.
There are several different options, 
such as
Compressed Sparse Column (CSC),
Compressed Sparse Row (CSR),
and Coordinate (COO).

The key algorithm in computing weak column reduction with respect to a $k\times l$ non-branching matrix $D$ is to find connected components, with input being an incidence matrix.
There are multiple algorithms that can achieve this,
such as depth-first-search (DFS),
Breadth-First Search (BFS),
and Union-Find (Disjoint Set Union, DSU).
We refer to \cite{cormen2022introduction} for more details about graph algorithms.
In this work we apply DSU based on the following reasons:
Firstly,
our input data is an incidence matrix with respect to a graph,
by applying DFS or BFS one has to first convert an incidence matrix to adjacency lists,
which will take $O(kl)$,
while DSU has near-linear time $O(k \, \alpha (l))$,
here $\alpha (l)$ is the inverse Ackermann function,
an extremely slow-growing function.
\footnote{For any practical values of $l$,
$\alpha(l)$ never exceeds $5$.
}
Secondly,
DSU excels for dynamic graphs,
i.e.,
edges and loops varies,
this happens when computing boundary matrices with respect to a non-branching simplicial filtration. This will become important in \cref{sec:filtration}.

Suppose the input $D\in M_{k\times l}(\mathbb{R})$ is a non-branching sparse (CSR format) matrix.
Recall that $|D|$ is the matrix whose non-zero entries are the absolute values of those in $D$.
It is easy to see that the computation of $|D|$ from $D$ takes $O(k)$.
We consider $|D|$ as the transpose of an incidence matrix with respect to an undirected graph with loops,
that is,
we regard each column in $|D|$ as a vertex,
and each row as an edge or a loop.
We first apply DSU (see \cref{alg:dsu} in \cref{appdx_codes}) to find all components,
which takes $O(k\alpha(l))$,
recall that in DSU algorithm, 
two operations \textsc{Union} (see \cref{alg:union} in \cref{appdx_codes}) and
\textsc{Find} (see \cref{alg:find} in \cref{appdx_codes}) are used to merge two sets and find the root of a component, respectively,
and the components are stored in the list ``parent'' that contains the root of each vertex. 
We then need to find the vertices having self-loops and their corresponding roots,
% i.e,
%the columns of $|D|$ that have at least one entry which is the only non-zero entry in its corresponding row.
when $|D|$ is stored in the CSR format,
it takes $O(k)$.
\footnote{This can be done by scanning over the CSR format index pointer.}
Next we need to filter out these roots, 
which takes $O(k+l)$ if we traverse the list of parents once and use a hash map to track indices for all roots.

We then need to reorient a non-branching matrix so that filter out all the irregular components.
The main idea is to check if we can ensure that each entry in the column sum is not equal to $2$ and $-2$ by flipping the column signs of $D$.
If this is impossible,
then the matrix $D$ is non-orientable.
We initialize the flag list to be $l$ copies of $1$s,
and scan over rows of $D$,
if the column sum is $2$ or $-2$,
we flip the flag list.
The \textsc{Reorientation} algorithm is shown in \cref{alg:reort}. 
Since $D$ is stored in CSR format,
finding the column indices of non-zero entries in each row takes $O(1)$,
hence the time complexity of \textsc{Reorientation} is $O(k)$.

We summarize the time complexity analysis in the following proposition.

\begin{proposition}
\label{prop:alg_weak_red}
Let $D\in M_{k\times l}(\mathbb{R})$ non-branching matrix.
The time complexity of weak column reduction \cref{alg:weak_red} is $O(k\times \alpha(l) + l)$.
\end{proposition}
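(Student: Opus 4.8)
The plan is to combine the structural decomposition established in \cref{thm:main_thm} with the data-structure analysis sketched in the preceding paragraphs, accounting carefully for each phase of \cref{alg:weak_red}. First I would recall that the algorithm factors as: (i) form $|D|$ from $D$; (ii) run DSU on $|D|^T$ viewed as an incidence matrix of an undirected graph with loops; (iii) identify self-loop vertices and the roots of their components; (iv) run \textsc{Reorientation} to detect which components are irregular; (v) assemble the flag matrix $E$ and the upper-triangular matrix $U$, reading off $R = D(EU)$ and the list of zero columns. The correctness of this output is exactly the content of \cref{thm:main_thm}, so here only the running time needs to be argued.

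Next I would bound each phase. Since $D$ is given in CSR format with at most $2k$ nonzero entries, computing $|D|$ is $O(k)$. The DSU phase performs one \textsc{Union}/\textsc{Find} per nonzero entry (equivalently per row, since each row contributes at most one edge), giving $O(k\,\alpha(l))$ by the standard amortized analysis of union by rank with path compression (see \cref{alg:dsu,alg:union,alg:find}); the initialization of the \texttt{parent} and rank arrays is $O(l)$. Detecting self-loops is a single scan of the CSR index pointer, $O(k)$; filtering out their roots is one pass over the \texttt{parent} array together with a hash map, $O(k+l)$. The \textsc{Reorientation} routine (\cref{alg:reort}) scans the rows once, doing $O(1)$ work per row because CSR gives constant-time access to the (at most two) column indices of nonzero entries, so it is $O(k)$. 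Finally, constructing $E$ (an $l$-vector of signs) and the sparse representation of $U$ (one extra nonzero per column in a regulable component, hence $O(l)$ total), and reading off $R$, costs $O(k+l)$.

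Summing the contributions $O(k) + O(k\,\alpha(l) + l) + O(k) + O(k+l) + O(k) + O(k+l)$, and noting $\alpha(l)\geq 1$, the dominant term is $O(k\,\alpha(l) + l)$, which is the claimed bound. I would present this as a short enumeration mirroring \cref{alg:weak_red} line by line.

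The main obstacle I anticipate is not any single estimate but making the accounting airtight: one must be sure that every phase truly touches each nonzero entry only a constant number of times, and in particular that the DSU operations are keyed to the $O(k)$ nonzeros rather than to pairs of columns (a naive ``for each pair of columns sharing a row'' reading would be quadratic). The CSR format is what makes the per-row access $O(1)$, so I would state explicitly that the input format is CSR and that each row of a non-branching matrix has at most two nonzeros, then invoke the classical $\alpha$-bound for DSU as a black box rather than reproving it. A secondary subtlety is that the output size itself — the sparse $R$, $E$, and $U$ — is $O(k+l)$, so even writing the answer fits within the stated budget; I would note this to preempt the objection that producing the factorization could be more expensive than merely computing connected components.
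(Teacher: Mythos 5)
Your proposal is correct and follows essentially the same route as the paper's proof: a phase-by-phase accounting of \cref{alg:weak_red}, charging $O(k\,\alpha(l))$ to the DSU calls, $O(k)$ or $O(l)$ to the loop detection, root filtering, per-component \textsc{Reorientation}, and the updates of $E$, $V$, and $R$, and then summing to $O(k\,\alpha(l)+l)$. Your added remarks on CSR access and the $O(k+l)$ output size only make explicit what the paper leaves implicit.
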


\begin{proof}
%We finally summarize the \textsc{weakcolumnreduction} algorithm in \cref{alg:weak_red}.
Recall that \textsc{DSU} takes $O(k\times \alpha({l}))$,
and the procedure of searching roots with respect to loops takes $O(l)$ since the size of vertices is $l$.
%In \cref{alg:weak_red},
Retrieving the column partition $\{I_r\}_{r}$ and the corresponding row partition $\{J_r\}_r$ takes $O(l)$ and $O(k)$, respectively,
computing $\textsc{\text{Reorientation}}(D_r)$ takes $O(|J_r|)$,
and updating $E(I_r, I_r)$ takes $O(I_r)$,
therefore,
%\cref{alg:weak_red}
it takes $O(k\times \alpha(l)+l)$.
\end{proof}

\begin{remark}
The following shows that a DSU based approach is faster than simply applying Gaussian column reduction. Consider the $k\times (k+1)$ matrix 
\[
D = 
\begin{bmatrix}
1 & -1 & 0  & 0 & \cdots & 0\\
1 & 0 & -1 & 0 & \cdots & 0\\
1 & 0 & 0  & -1 & \cdots & 0\\
\vdots & \vdots & \vdots & \vdots & \ddots & \vdots\\
1 & 0 & 0 & 0 & \cdots & -1
\end{bmatrix},
\]
and observe that the Gaussian column elimination of $D$ needs to add two columns iteratively $k$ times,
which will cost $O(k^2)$. In contrast,
the weak column reduction takes a near-linear time according to \cref{prop:alg_weak_red}.
\end{remark}

\begin{algorithm}[H]
\caption{\textsc{Reorientation}}
\label{alg:reort}
\begin{algorithmic}[1]
\State \textbf{Input:} $D$
\Comment{$D$ is a $k\times l$ sparse non-branching matrix}
% \Comment{}
\State \textbf{Output:} isorientable, $\textrm{flag}$ %, $D'$
\Comment{$E$ records the signs, $D'$ is the reorientation of $D$}
%Reorientation of $D$

% \State 

\State $\textrm{isorientable}\gets \textrm{true}$
\State $\text{flag}\gets [1, 1, \cdots, 1]$ 
\Comment{Initialize $\text{flag}$ to be a length $l$ list of all $1$s}
\State $\text{vertices} \gets [1, 2, \cdots, l]$ 
% \State $D'\gets D$
% \Comment{Initialize $D'$}

%\Statex
\For{$i=1$ to $k$}
\State $\text{edge}_i\gets \text{column index (or indices) of non-zero entry (or entries) in } D(i, :)$ 

\If{$\text{edge}_i$ has two different vertices}
\State $u, v\gets \text{edge}_i$
\If{$\text{flag}[u]*D(i, u)+\text{flag}[v]*D(i, v)=-2 \text{ or }2$}

\If{$u\in \text{vertices}$ and $v\in \text{vertices}$}
\State $D(:, v)\gets -D(:, v)$
\State $\textrm{flag}[v]\gets -1$

\ElsIf{$u\notin \text{vertices}$ and $v\in \text{vertices}$}
\State $D(:, v)\gets -D(:, v)$
\State $\textrm{flag}[v]\gets -1$

\ElsIf{$u\in \text{vertices}$ and $v\notin \text{vertices}$}
\State $D(:, u)\gets -D(:, u)$
\State $\textrm{flag}[u]\gets -1$

\ElsIf{$u\notin \text{vertices}$ and $v\notin \text{vertices}$}
\State $\text{isorientable}\gets \text{false}$
\State \textbf{break}
\Comment{The matrix $D$ is non-orientable, break for loop}
\EndIf

%\If{$\text{flag}[u]=\text{flag}[v]=1$}
% \State $D(:, v)\gets -D(:, v)$
%\State $\textrm{flag}[v]\gets -1$
%\ElsIf{either $\text{flag}[u]=-1$ or $\text{flag}[v]=-1$}
%\State $\text{isorientable}\gets \text{false}$
%\State \textbf{break}
%\Comment{The matrix $D$ is non-orientable, break for loop}
%\EndIf
\EndIf
\EndIf
\State $\text{vertices}\gets \text{vertices}\backslash \text{edge}_i$

\EndFor

\end{algorithmic}
\end{algorithm}

%\State $\text{vertices}\gets \text{vertices}\backslash \text{edge}_i$

\begin{algorithm}[H]
\caption{\textsc{WeakColumnReduction}}
\label{alg:weak_red}
\begin{algorithmic}[1]
\State \textbf{Input:} $D$
\Comment{$D$ is a $k\times l$ matrix}
\State \textbf{Output:} $R$, $E$, $V$
\Comment{$R=DEV$}
\State $R\gets D$
\State $V\gets\id_l$
\State $E\gets \id_l$
\State $\text{parent}\gets \textsc{DSU}(|D|)$
\Comment{parent} is the list of roots of each vertex
\State $\text{roots}\gets$ the set of unique values of parent
\State $S_{\text{loops}}\gets$ the set of vertices having loops
\Comment{$S_{\text{loops}}$ is computed from index pointer of $D$}
\State $\text{roots}_{\text{loops}}\gets \text{parent}[S_\text{loops}]$
\Comment{Find the roots of components having loops}
\State $\text{roots}_{\text{noloops}}\gets \text{roots}\backslash \text{roots}_{\text{loops}}$

\For{$r\in \text{roots}_{\text{noloops}}$}
\Comment{update $R$, $E$ and $V$}
\State $I_r\gets$ the indices of $r$ in parent, sorted in increasing order
\State $J_r\gets$ the indices of non-zero rows in $D(:, I_r)$ 
\State $D_r\gets D(J_r, I_{r})$
\State isorientable, flag $\gets$ \textsc{Reorientation}($D_r$)
\If{isorientable is true}
\State $i_r\gets$ the last element in $I_r$
\State $R(:, i_r)\gets 0$
\Comment{Set the column $R(:, i_r)$ to be zeros}
\State $V(I_r, i_r)\gets 1$
\Comment{Set the $I_r$ rows in column $V(:, i_r)$ to be ones}
\State $E(I_r, I_r)\gets \text{flag}$
\EndIf
\EndFor

\Statex

\end{algorithmic}
\end{algorithm}

\section{A faster algorithm for the up persistent Laplacian}\label{sec:up_pers_lap}
In this section, we leverage our results from \cref{sec:non_brch_mtx} to compute the up persistent Laplacian. We start by defining non-branching simplicial complexes and recalling the definition of up persistent Laplacians as introduced in \cite{pers_lap}. In \cref{subsec:regular_form}, we present the main result of this section, \cref{prop:pers_lap}, which establishes that the computation of the up persistent Laplacian can be achieved using quadratic arithmetic operations for non-branching simplicial complexes.

\begin{comment}
    
In subsection \ref{subsec:kron_red} we regard an up persistent Laplacian as a Laplacian over an oriented hypergraph,
and present a Cheeger-type inequality in 
Proposition \ref{prop:cheeger}.
%Then in Section \ref{subsec:simp_filt} we present the computation of up persistent Laplacian in a special case of simplicial filtration. 
Finally in Section \ref{subsec:cubical} we implement our method on the image dataset,
which shows that our algorithm runs faster than the algorithm given in \cite{pers_lap}.
\end{comment}

\subsection{Background material}\label{subsec:sp_cmplx}

Let $\LL$ be a simplicial complex.
We denote by $S_{q}^\LL$ the set of all $q$-simplices in $\LL$.

\begin{definition}
%Let $\LL$ be a simplicial (resp. cubical) complex.
We say a $q$-simplex in $\LL$ is \emph{non-branching} if it is a face of at most two $(q+1)$-simplices,
and we say $\LL$ is \emph{$q$-non-branching} if all the $q$-simplices $S_q^\LL\subset \LL$ are \emph{non-branching}.
\end{definition}
When the dimension is clear from context, we simply refer to $\LL$ as \emph{non-branching}. 
\begin{remark}
    
Observe that if $\LL$ is a $(q+1)$-dimensional pseudomanifold \cite{MR575168}, then
the collection $\LL$ is non-branching.
This is also the case for cubical complexes \cite{MR3025945},
since each $q$-cube is the face of at most two $(q+1)$-cubes.
\end{remark}

Choose an orientation for each simplex $\sigma \in S_q^\LL$,
and denote the set of oriented $q$-simplices as $\bar{S}_q^\LL$.
Let $C_q^\LL$ be the chain group over $\mathbb{R}$ generated by $\bar{S}_q^\LL$,
and denote by $\partial_q^\LL\colon C_q^\LL\to C_{q-1}^\LL$ the boundary operator. We associate to each simplex a positive weight via the function $w^\LL\colon \LL\to \mathbb{R}^+$, and we say $\LL$ is \emph{unweighted} if $w^\LL\equiv 1$. Moreover, we let $w_q^\LL$ denote the restriction of $w^\LL$ to $C_q^\LL$. An inner product over $C_q^\LL$ can be defined by
\[
\langle [\sigma_1], [\sigma_2]\rangle_{w_q^\LL}
\coloneqq
\delta_{\sigma_1\sigma_2}\cdot \left(w_q^\LL(\sigma_1)\right)^{-1},
\]
where $\delta_{\sigma_1\sigma_2}$ is equal to $1$ if and only if $\sigma_1=\sigma_2$,
otherwise it is equal to $0$.
We denote by $\left(\partial_q^\LL\right)^\ast: C_{q-1}^\LL\to C_q^\LL$ the adjoint of $\partial_q^\LL$ with respect to this inner product.

\begin{definition}
\label{total:lap}    
The \emph{$q$-th up Laplacian} is $\triangle^{\LL}_{q,\mathrm{up}}\coloneqq \partial_{q+1}^{\LL}\circ \left(\partial_{q+1}^{\LL}\right)^\ast$,
the \emph{$q$-th down Laplacian} is
$\triangle^{\LL}_{q, \mathrm{down}}\coloneqq
\left(\partial_q^\LL\right)^\ast\circ\partial_q^\LL
$,
and the \emph{$q$-th combinatorial Laplacian} $\triangle_q^\LL: C_q^\LL\to C_q^\LL$ is defined as 
$
 \triangle_q^{\LL}\coloneqq
 \triangle^{\LL}_{q,\mathrm{up}} + 
 \triangle_{q, \mathrm{down}}^\LL.
$
\end{definition}

For a pair of simplicial complexes $\KK\hookrightarrow \LL$, let $C_q^{\LL, \KK}$ be the subspace of $C_q^\LL$ defined as:
\[
C_q^{\LL, \KK}:= \{c\in C_q^\LL: \partial_q^\LL(c)\in C_{q-1}^\KK\}\subset C_q^\LL,
\]
and let $\partial_{q}^{\LL, \KK}:= \partial_q^\LL\Big |_{C_{q}^{\LL, \KK}}$ be the boundary operator restricted to $C_q^{\LL, \KK}$.

\begin{definition}[\cite{pers_lap}]
\label{def:perslap}
The \emph{$q$-th up persistent Laplacian} $\triangle_{q, \mathrm{up}}^{\KK, \LL}: C_q^\KK\to C_q^\KK$ is 
\[
 \triangle_{q, \mathrm{up}}^{\KK, \LL}\coloneqq \partial_{q+1}^{\LL, \KK}\circ \left(\partial_{q+1}^{\LL, \KK}\right)^\ast,
\]
and the \emph{$q$-th persistent Laplacian} $\triangle_{q}^{\KK, \LL}: C_q^\KK\to C_q^\KK$ is
\[
 \triangle_{q}^{\KK, \LL}\coloneqq \partial_{q+1}^{\LL, \KK}\circ \left(\partial_{q+1}^{\LL, \KK}\right)^\ast +
 \left(\partial_q^\KK\right)^\ast \circ \partial_q^\KK.
\]
\end{definition}

\begin{figure}
\centering
\begin{tikzcd}
\centering
     C_{q+1}^\KK\arrow{rr}{\partial_{q+1}^\KK}\arrow[hookrightarrow,dashed,gray]{dd} && C_q^\KK\arrow[rr,shift left=.75ex,blue,"\partial_q^\KK"]\arrow[hookrightarrow,dashed,gray]{dd}\arrow[dl,blue,"\left(\partial_{q+1}^{\LL, \KK}\right)^*" {yshift=-2pt, xshift=-7pt}] && C_{q-1}^\KK\arrow[hookrightarrow,dashed,gray]{dd}\arrow[ll,shift left=.75ex,blue,"\left(\partial_q^\KK\right)^*"]\\
      &C_{q+1}^{\LL, \KK}\arrow[hookrightarrow,dashed,gray]{dl}\arrow[ur,shift left=.75ex,blue,"\partial_{q+1}^{\LL, \KK}"]&& \,\,\,\,\,\,\,\,\,\, & \\
        C_{q+1}^\LL\arrow{rr}{\partial_{q+1}^\LL}  && C_q^\LL\arrow{rr}{\partial_q^\LL} && C_{q-1}^\LL
\end{tikzcd}
\caption{The relevant morphisms in the definition of the persistent Laplacian $\triangle_{q}^{\KK, \LL}$.}
\label{fig_dig_per_lap}
\end{figure}

The commutative diagram associated with $\triangle_{q}^{\KK, \LL}$ is displayed in \cref{fig_dig_per_lap}. In this paper, we shall restrict our attention to the up persistent Laplacian $\triangle_{q, \mathrm{up}}^{\KK, \LL}$; see \cref{rem:full-laplacian}.

We recall the matrix representation of $\triangle_{q, \mathrm{up}}^{\KK, \LL}$ in the following theorem.

\begin{theorem}[{\cite[Theorem 3.1]{pers_lap}}]\label{thm_memoli}
Assume $n_{q+1}^{\LL, \KK}\coloneqq \dim(C_{q+1}^{\LL, \KK})>0$.
Fix any basis of $C_{q+1}^{\LL, \KK}\subset C_{q+1}^{\LL}$, represented as a matrix $Z\in \mathbb{R}^{n_{q+1}^\LL\times n_{q+1}^{\LL, \KK}}$, and let $B_{q+1}^{\LL, \KK}$ be the corresponding matrix representation of $\partial_{q+1}^{\LL, \KK}$.
Let $W_{q}^\KK$ and $W_q^\LL$ denote the diagonal weight matrix representations of $w_{q}^\KK$ and $w_q^\LL$, respectively. The matrix representation $\Delta_{q, \mathrm{up}}^{\KK, \LL}$ of $\triangle_{q, \mathrm{up}}^{\KK, \LL}$ can be expressed as:
\begin{equation}\label{eqn:per_lap}
\Delta_{q, \mathrm{up}}^{\KK, \LL}
=
B_{q+1}^{\LL, \KK} W_{q}^{\LL, \KK}\left(B_{q+1}^{\LL, \KK}\right)^{T}(W_q^\KK)^{-1},
\end{equation}
where $W_{q}^{\LL, \KK}:=\left(Z^T (W_{q+1}^\LL)^{-1} Z\right)^{-1}$.
\end{theorem}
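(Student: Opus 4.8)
The plan is to choose explicit bases, write down the matrices of $\partial_{q+1}^{\LL,\KK}$ and of its adjoint $\left(\partial_{q+1}^{\LL,\KK}\right)^\ast$ separately, and then multiply. The only nonroutine point is that neither $C_q^\KK$ nor $C_{q+1}^{\LL,\KK}$ carries the standard inner product, so the adjoint will involve Gram matrices. First I would record the elementary fact that if $T\colon U\to V$ is a linear map of finite-dimensional inner-product spaces, and we fix bases in which the inner products of $U$ and $V$ have Gram matrices $G_U$ and $G_V$ and in which $T$ is the matrix $M$, then $T^\ast\colon V\to U$ is the matrix $G_U^{-1}M^TG_V$; this drops out of writing $\langle Tu,v\rangle_V=\langle u,T^\ast v\rangle_U$ in coordinates and comparing.

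Next I would pin down the two Gram matrices. On $C_q^\KK$ with the basis of oriented $q$-simplices, the defining formula $\langle[\sigma_1],[\sigma_2]\rangle_{w_q^\KK}=\delta_{\sigma_1\sigma_2}\,(w_q^\KK(\sigma_1))^{-1}$ makes the Gram matrix the diagonal matrix $(W_q^\KK)^{-1}$. On $C_{q+1}^{\LL,\KK}$ I use the inner product restricted from $C_{q+1}^\LL$, which in the oriented $(q+1)$-simplex basis of $C_{q+1}^\LL$ has Gram matrix $(W_{q+1}^\LL)^{-1}$; since the columns of $Z$ are the coordinates of a basis of the subspace $C_{q+1}^{\LL,\KK}$, the restricted inner product has Gram matrix $Z^T(W_{q+1}^\LL)^{-1}Z$ in that basis. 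I would then note that this matrix is symmetric and positive definite, since $(W_{q+1}^\LL)^{-1}$ is positive definite and $Z$ has full column rank (being a basis), hence it is invertible and its inverse is, by definition, $W_q^{\LL,\KK}$.

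Finally I would assemble the pieces. The defining condition for $C_{q+1}^{\LL,\KK}$ is precisely that $\partial_{q+1}^\LL$ maps it into $C_q^\KK$, so $\partial_{q+1}^{\LL,\KK}$ is genuinely a linear map $C_{q+1}^{\LL,\KK}\to C_q^\KK$ and $B_{q+1}^{\LL,\KK}$ is its matrix in the basis $Z$ of the source and the oriented-simplex basis of the target. Substituting $G_U=Z^T(W_{q+1}^\LL)^{-1}Z=\left(W_q^{\LL,\KK}\right)^{-1}$, $G_V=(W_q^\KK)^{-1}$, and $M=B_{q+1}^{\LL,\KK}$ into the fact above shows that $\left(\partial_{q+1}^{\LL,\KK}\right)^\ast$ is represented by $W_q^{\LL,\KK}\left(B_{q+1}^{\LL,\KK}\right)^T(W_q^\KK)^{-1}$, and left-multiplying by the matrix $B_{q+1}^{\LL,\KK}$ of $\partial_{q+1}^{\LL,\KK}$ gives \eqref{eqn:per_lap}. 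I expect the main obstacle to be purely one of bookkeeping: making sure the basis $Z$ is applied consistently on both sides of the adjoint identity, and that it is the \emph{restricted} inner product on $C_{q+1}^{\LL,\KK}$ — not some other inner product on that subspace — that produces the factor $W_q^{\LL,\KK}$; once those conventions are fixed, the identity is forced.
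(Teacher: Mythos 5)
Your argument is correct: the Gram matrices $(W_q^\KK)^{-1}$ and $Z^T(W_{q+1}^\LL)^{-1}Z$ are identified properly, the adjoint formula $G_U^{-1}M^TG_V$ is right, and composing gives exactly \eqref{eqn:per_lap}. The paper does not prove this statement itself but quotes it from \cite[Theorem 3.1]{pers_lap}, and your route is essentially the same weighted-inner-product/adjoint computation used there, so nothing further is needed.
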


We need to recall one more lemma. First, sort all the $q$-simplices $\bar{S}_q^\LL$,
and denote this ordered set by $[n_q^\LL]$. Let $I_q^\KK$ denote the indices of the $q$-simplices $S_q^\KK$ in $\LL$, and let $I_q^{\LL, \KK}:=[n_{q}^\LL]\backslash I_q^\KK$ denote the indices of $q$-simplices in $\LL$ but not in $\KK$. Lastly, let $D_{q+1}^{\LL, \KK}\coloneqq B_{q+1}^\LL(I_q^{\LL, \KK}, :)$.
%We then consider about the up persistent Laplacian over a pair of $q$-non-branching simplicial (resp. cubical) complexes. 
As is evident from \eqref{eqn:per_lap}, the key point of computing $\Delta_{q, \mathrm{up}}^{\KK, \LL}$ is to find expressions for $B_{q+1}^{\LL, \KK}$ and $Z$, and this can be done via \cref{thm:weakfactor}.
The following lemma is a corollary of \cite[Lemma 3.4]{pers_lap}.

\begin{lemma}\label{lemma:pers_lap_memoli}
Let $\KK\hookrightarrow \LL$ be a pair of  simplicial complexes where $\LL$ is $q$-non-branching, and let $R_{q+1}^{\LL}:= D_{q+1}^{\LL, \KK} Y$ be a weak column reduction of $D_{q+1}^{\LL}$.
Moreover, let $I\subset [n_{q+1}^\LL]$ be the index set of $0$ columns of $R_{q+1}^\LL$.
Then, the following hold:
\begin{enumerate}
    \item If $I=\emptyset$,
    then $C_{q+1}^{\LL, \KK}=\{0\}$.
    \item If $I\neq \emptyset$ and $Z\coloneqq Y(:, I)$,
    then the columns of $Z$ constitute a basis for $C_{q+1}^{\LL, \KK}$.
    Moreover,
    $B_{q+1}^{\LL, \KK}\coloneqq (B_{q+1}^\LL Y)(I_q^{\KK}, I)$ is the matrix representation of $\partial_{q+1}^{\LL, \KK}$.
\end{enumerate}
\end{lemma}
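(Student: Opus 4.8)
The plan is to derive this directly from \cite[Lemma 3.4]{pers_lap} by verifying that a weak column reduction of $D_{q+1}^{\LL, \KK}$ supplies exactly the data that lemma requires. First I would recall what \cite[Lemma 3.4]{pers_lap} says in the general (not necessarily non-branching) setting: given \emph{any} column reduction $R = D_{q+1}^{\LL, \KK} Y$ of the matrix $D_{q+1}^{\LL, \KK}$ (with $Y$ invertible upper-triangular), the columns of $Y$ corresponding to the zero columns of $R$ span $\ker D_{q+1}^{\LL, \KK}$, and since $C_{q+1}^{\LL, \KK}$ is by definition $\{c \in C_{q+1}^\LL : \partial_{q+1}^\LL(c) \text{ has zero component on every $q$-simplex of } \LL \setminus \KK\}$, one has $C_{q+1}^{\LL, \KK} = \ker D_{q+1}^{\LL, \KK}$ under the chosen bases. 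So the only thing that needs checking is that \cref{thm:weakfactor} produces an object that legitimately plays the role of ``$Y$'' in that lemma.

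The key steps, in order: (1) Observe that $D_{q+1}^{\LL, \KK} = B_{q+1}^\LL(I_q^{\LL,\KK}, :)$ is a non-branching matrix, being a row-submatrix of the $(q+1)$-st boundary matrix of a $q$-non-branching complex $\LL$ (each $q$-simplex bounds at most two $(q+1)$-simplices, so each row of $B_{q+1}^\LL$, hence of $D_{q+1}^{\LL,\KK}$, has at most two nonzero entries, each $\pm 1$). (2) If $D_{q+1}^{\LL, \KK}$ has a zero column, that column is the boundary of a $(q+1)$-simplex all of whose $q$-faces lie in $\KK$; such a simplex contributes a standard basis vector to $\ker$, and one checks these are handled correctly — alternatively one may first strip zero columns, apply \cref{thm:weakfactor} to the remainder, and reinsert; I would note this reduction explicitly so the hypothesis ``without zero columns'' of \cref{thm:weakfactor} is met. (3) Apply \cref{thm:weakfactor} to get $R_{q+1}^\LL = D_{q+1}^{\LL,\KK} Y$ with $Y$ upper-triangular, entries in $\{-1,0,1\}$, the nonzero columns of $R_{q+1}^\LL$ linearly independent, and — crucially — the $i$-th column of $Y$ having more than one nonzero entry iff the $i$-th column of $R_{q+1}^\LL$ is zero. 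Since $Y$ is upper-triangular with $\pm 1$ on the diagonal it is invertible, so $R_{q+1}^\LL$ and $D_{q+1}^{\LL,\KK}$ have the same rank and $Y$ maps the coordinate subspace spanned by $\{e_i : i \in I\}$ isomorphically onto... no: rather, the columns $Y(:,i)$ for $i \in I$ are linearly independent (columns of an invertible matrix), lie in $\ker D_{q+1}^{\LL,\KK}$ by definition of $I$, and number exactly $\dim \ker D_{q+1}^{\LL,\KK}$ (because the nonzero columns of $R_{q+1}^\LL$ are independent and there are $\mathrm{rank}$ of them, so $|I| = l - \mathrm{rank}$). Hence they form a basis of $\ker D_{q+1}^{\LL,\KK} = C_{q+1}^{\LL,\KK}$.

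For part (1), $I = \emptyset$ means $\ker D_{q+1}^{\LL,\KK} = \{0\}$, i.e. $C_{q+1}^{\LL,\KK} = \{0\}$, immediately. For part (2), with $Z := Y(:,I)$ the columns of $Z$ are a basis of $C_{q+1}^{\LL,\KK}$ by the above. It remains to identify the matrix representation of $\partial_{q+1}^{\LL,\KK}$: by definition $\partial_{q+1}^{\LL,\KK}$ is $\partial_{q+1}^\LL$ restricted to $C_{q+1}^{\LL,\KK}$ with codomain $C_q^\KK$, so in the basis $Z$ of the source and the standard $q$-simplex basis of $C_q^\KK$ (indexed by $I_q^\KK$) its matrix is $B_{q+1}^\LL$ applied to the columns $Z$, then restricted to the rows $I_q^\KK$ — that is, $(B_{q+1}^\LL Y)(I_q^\KK, I)$, since $B_{q+1}^\LL Z = B_{q+1}^\LL Y(:,I) = (B_{q+1}^\LL Y)(:,I)$ and then we keep only the $I_q^\KK$ rows; the rows in $I_q^{\LL,\KK}$ are automatically zero on $C_{q+1}^{\LL,\KK}$, which is consistent. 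I expect the main (minor) obstacle to be bookkeeping the two base changes cleanly — keeping straight that $Y$ acts on the $(q+1)$-simplex coordinates while the row-restriction to $I_q^\KK$ versus $I_q^{\LL,\KK}$ acts on the $q$-simplex coordinates — and making sure the zero-column reduction in step (2) is addressed so that \cref{thm:weakfactor} applies verbatim; neither is deep, and everything else is a direct invocation of \cite[Lemma 3.4]{pers_lap} together with the structural guarantees of \cref{thm:weakfactor}.
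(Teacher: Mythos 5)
Your proposal is correct and follows essentially the same route as the paper, which simply observes that the lemma is a corollary of \cite[Lemma 3.4]{pers_lap}: the only point needed is that the weak column reduction of \cref{thm:weakfactor} provides an invertible upper-triangular $Y$ whose columns indexed by $I$ span $\ker D_{q+1}^{\LL,\KK}=C_{q+1}^{\LL,\KK}$, since column-reducedness in the strict sense is never used, only linear independence of the non-zero columns of $R_{q+1}^\LL$. Your explicit handling of the zero columns of $D_{q+1}^{\LL,\KK}$ (so that \cref{thm:weakfactor} applies verbatim) and the rank count $|I|=l-\mathrm{rank}(D_{q+1}^{\LL,\KK})$ are exactly the bookkeeping the paper leaves implicit and later makes explicit in \cref{prop:pers_lap}.
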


\subsection{Computation of the up persistent Laplacian}\label{subsec:regular_form}
Let $\KK\hookrightarrow \LL$ be an inclusion of simplicial complexes for which $\LL$ is $q$-non-branching.
In this section, 
we compute the matrix representation of the up persistent Laplacian $\Delta_{q, \textrm{up}}^{\KK, \LL}$.
%Let $B_{q+1}^\LL$ be a representation of the boundary operator $\partial_{q+1}^\LL: C_{q+1}^\LL\to C_q^\LL$.
%Denote by $D_{q+1}^{\LL}\coloneqq B_{q+1}^\LL(I_q^{\LL, \KK}, :)$,
%we can compute the up persistent Laplacian $\Delta_{q, up}^{\KK, \LL}$ via computing the weak column reduction of $D_{q+1}^\LL$.
%Since $D_{q+1}^\LL$ is non-branching,
%thus we only need to fix the zero columns and apply Theorem \ref{thm:weakfactor} to the remaining non-zero columns in $D_{q+1}^\LL$.
We denote by $C_0=(c_0^1, c_0^2, \cdots, c_0^h)$ the indices of zero columns in $D_{q+1}^{\LL, \KK}$,
and decompose the column indices $C$ of $D_{q+1}^{\LL, \KK}$ as
\begin{equation}
\label{eq:decomp}
C = C_0 \bigcup
\left(\bigcup_{i=1}^r C_i\right) \bigcup 
\left(\bigcup_{i=1}^q \bar{C}_i\right)\bigcup \left(\bigcup_{i=1}^s \widetilde{C}_i\right).
\end{equation}
Following the notation at the beginning of \cref{subsec:non_branching_mtx}, noting that $D_{q+1}^{\LL}$ is non-branching. According to \cref{thm:weakfactor},
there is an upper-triangular matrix $V$ with entries in $\{-1, 0, 1\}$ such that $R=D_{q+1}^{\LL, \KK} V$ is a weak column reduction.
Moreover,
the $\{c_{1}^{l_1}, \cdots, c_r^{l_r}\}$ columns in $V$ form an orthogonal basis (with respect to the standard inner product) for the kernel of the matrix $D_{q+1}^{\LL, \KK}$ with the 0-columns removed.

\begin{proposition}\label{prop:pers_lap}
Let $R=D_{q+1}^{\LL,\KK}V$ be a weak column reduction.
Then the following hold:
\begin{enumerate}
    \item The matrix $R$ contains $(h+r)$ zero columns,
    and the corresponding column indices are 
    \[\{c_0^1, \cdots, c_0^h, c_1^{l_1},\cdots, c_r^{l_r}\}.\]
    \item The submatrix $Z:=V(:, \{c_0^1, \cdots, c_0^h, c_1^{l_1},\cdots, c_r^{l_r}\})$ constitutes a basis of $C_{q+1}^{\LL, \KK}$.
    \item For 
    $b_j:= B_{q+1}^\LL(I_q^\KK, C_j)V(C_j, c_j^{l_j})$ \[B_{q+1}^{\LL, \KK}
    \coloneqq 
    B_{q+1}^\LL(I_q^{\KK}, C_0) \#
    \begin{bmatrix}
    b_1 & b_2 & \cdots & b_r
    \end{bmatrix}
    \]
    is the matrix representation of $\partial_{q+1}^{\LL, \KK}$ with respect to the basis $Z$.
\end{enumerate}
\end{proposition}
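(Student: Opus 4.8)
The plan is to read items~2 and~3 directly off \cref{lemma:pers_lap_memoli} once the precise shape of the weak column reduction $R=D_{q+1}^{\LL,\KK}V$ and the location of its zero columns have been pinned down using \cref{thm:weakfactor} (equivalently \cref{thm:main_thm}). Recall that \cref{lemma:pers_lap_memoli} already asserts, for \emph{any} weak column reduction $R=D_{q+1}^{\LL,\KK}V$ with zero-column index set $I$, that either $I=\emptyset$ and $C_{q+1}^{\LL,\KK}=\{0\}$, or $Z:=V(:,I)$ is a basis of $C_{q+1}^{\LL,\KK}$ and $(B_{q+1}^\LL V)(I_q^\KK,I)$ represents $\partial_{q+1}^{\LL,\KK}$ in this basis. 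So the work splits into: (a) showing $I=\{c_0^1,\dots,c_0^h,c_1^{l_1},\dots,c_r^{l_r}\}$, which is item~1; and (b) rewriting $(B_{q+1}^\LL V)(I_q^\KK,I)$ in the concatenated form of item~3.

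First I would fix the shape of $V$. Since $D_{q+1}^{\LL,\KK}=B_{q+1}^\LL(I_q^{\LL,\KK},:)$ is a row-submatrix of a boundary matrix it is non-branching, so after deleting its zero columns $C_0$ I apply \cref{thm:main_thm} to the remaining columns and the decomposition \eqref{eq:decomp}, obtaining $V=EU$ where $E$ is a flag matrix (normalised to $+1$ on the immaterial $C_0$ columns and on every non-regulable component) and $U$ is upper-triangular with $1$'s on the diagonal, with $U(c_i^{t},c_i^{l_i})=1$ for $1\le i\le r$, $1\le t\le l_i$ as in \cref{lem:Ul}, and $U(:,j)=e_j$ otherwise. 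Thus the only columns of $V$ with more than one non-zero entry are $V(:,c_1^{l_1}),\dots,V(:,c_r^{l_r})$, and $\operatorname{supp}V(:,c_i^{l_i})\subseteq C_i$; since $C_0,C_1,\dots$ are pairwise disjoint this also records the ``mutually disjoint supports'' property.

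For item~1 I would then compute: $R(:,c_0^m)=D_{q+1}^{\LL,\KK}(:,c_0^m)=0$ by the definition of $C_0$; $R(:,c_i^{l_i})=D_{q+1}^{\LL,\KK}(:,C_i)\,V(C_i,c_i^{l_i})$, which vanishes by \cref{lem:simpleprod} because $(D_{q+1}^{\LL,\KK}E)(:,C_i)$ is regular and hence has all row sums zero; and $R(:,j)=\pm D_{q+1}^{\LL,\KK}(:,j)\neq 0$ for every remaining $j$, these being (up to sign) the non-zero columns of $D_{q+1}^{\LL,\KK}$, which are moreover linearly independent by \cref{thm:main_thm}. Hence the zero-column set of $R$ is exactly $I=\{c_0^1,\dots,c_0^h\}\cup\{c_1^{l_1},\dots,c_r^{l_r}\}$, of cardinality $h+r$. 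Items~2 and~3 are then obtained by feeding this $I$ into \cref{lemma:pers_lap_memoli}: item~2 is immediate, and for item~3 I expand $(B_{q+1}^\LL V)(I_q^\KK,I)$ column by column along the ordered list $I=C_0\#(c_1^{l_1},\dots,c_r^{l_r})$ — the $c_0^m$-column is $B_{q+1}^\LL(I_q^\KK,:)e_{c_0^m}=B_{q+1}^\LL(I_q^\KK,c_0^m)$, and the $c_j^{l_j}$-column is $B_{q+1}^\LL(I_q^\KK,:)V(:,c_j^{l_j})=B_{q+1}^\LL(I_q^\KK,C_j)\,V(C_j,c_j^{l_j})=b_j$ using $\operatorname{supp}V(:,c_j^{l_j})\subseteq C_j$ — and concatenate.

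The only genuinely delicate point is that \cref{thm:weakfactor} is stated for matrices without zero columns, whereas $D_{q+1}^{\LL,\KK}$ typically has the zero columns $C_0$; so one must be careful to extend the factorisation across $C_0$ by identity columns and to note that, although this breaks the biconditional ``$V$-column has $>1$ entry $\iff$ $R$-column is zero'' (the $c_0^m$ columns are single-entry columns of $V$ yet give zero columns of $R$), everything stays consistent because $e_{c_0^m}\in\ker D_{q+1}^{\LL,\KK}\subseteq C_{q+1}^{\LL,\KK}$ exactly captures the $(q+1)$-simplices all of whose $q$-faces lie in $\KK$. Tracking the flag signs (so they can be normalised to $+1$ on $C_0$) and the ordering used in the concatenations is the other spot where an error could slip in; neither is deep.
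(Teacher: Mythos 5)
Your proposal is correct and follows essentially the same route as the paper: it pins down the zero columns of $R$ via \cref{thm:main_thm} applied to $D_{q+1}^{\LL,\KK}$ with its zero columns $C_0$ set aside, then invokes \cref{lemma:pers_lap_memoli} for items~2 and~3, and expands $(B_{q+1}^\LL V)(I_q^\KK,I)$ column by column exactly as the paper does (the $C_0$ columns of $V$ being standard basis vectors, the $c_j^{l_j}$ columns supported in $C_j$). Your extra care about extending the factorisation across $C_0$ and verifying the zero/non-zero pattern via \cref{lem:simpleprod} only makes explicit what the paper leaves implicit.
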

\begin{proof}
    The first statement follows immediately from  \cref{thm:main_thm} if we fix the zero columns of $D_{q+1}^{\LL, \KK}$ and compute a weak column reduction from its non-zero columns. The second statement is then a corollary of  \cref{lemma:pers_lap_memoli}.
    The third statement follows from \cref{lemma:pers_lap_memoli} as well.
    In fact, according to \cref{lemma:pers_lap_memoli} we have 
    $B_{q+1}^{\LL, \KK}=B_{q+1}^\LL(I_q^\KK, :)V\left(:, \{c_0^1,\cdots, c_0^{h}, c_1^{l_1}, \cdots, c_r^{l_r}\}\right)$.
    Recall that $\{c_0^1,\cdots, c_0^h\}$ are the indices of zero columns in $D_{q+1}^{\LL, \KK}$ and therefore for each $1\leq i \leq h$ the column vector $V\left(:, c_0^i\right)$ contains only one $1$ at row $c_0^i$ and all the remaining entries are $0$'s.
    Therefore,
    $B_{q+1}^\LL(I_q^\KK, :)V\left(:, C_0\right)=B_{q+1}^\LL(I_q^{\KK}, C_0)$.
    It thus remains to prove that $b_j=B_{q+1}^\LL(I_q^\KK, :)V(:, c_j^{l_j})$ for each $1\leq j\leq r$.
    In fact,
    according to  \cref{thm:main_thm}, the elements of $V(C_j, c_j^{l_j})$ are in $\{-1, 1\}$, and all remaining entries in the $c_j^{l_j}$-row are $0$'s. Therefore,  $B_{q+1}^\LL(I_q^\KK, :)V(:, c_j^{l_j})=B_{q+1}^\LL(I_q^\KK, C_j)V(C_j, c_j^{l_j})$ and the third statement is proved.
\end{proof}
We arrive at our main result in this section, where we follow the notation from the previous proposition, let $s_j:=\sum\limits_{k\in C_j}\frac{1}{W_{q+1}^\LL(k, k)}$ for $1\leq j\leq r$, and $S=\diag(s_1, s_2,\cdots, s_r)$ be a diagonal matrix.
\begin{theorem}
    \label{thm:fastcomputation}
    %For $W_q^\KK$ and $W_q^\LL$ denote the diagonal weight matrix representation of $w_q^\KK$ and $w_q^\LL$ respectively.
    Let $\KK \hookrightarrow \LL$ be simplicial complexes, where $\LL$ is $q$-non-branching. The up persistent Laplacian $\Delta_{q, \mathrm{up}}^{\KK, \LL}$ is given by 
    \begin{equation}\label{eq:uplap_1}
    \Delta_{q, \mathrm{up}}^{\KK, \LL}
    =
    B_{q+1}^{\LL, \KK} W_{q+1}^{\LL, \KK}\left(B_{q+1}^{\LL, \KK}\right)^{T}(W_{q}^\KK)^{-1},
    \end{equation}
    where $W_{q+1}^{\LL, \KK}$ equals the diagonal matrix $W_{q+1}^{\LL}(C_0, C_0) \oplus S^{-1}$.
    %Moreover,
     %$\Delta_{q, \mathrm{up}}^{\KK, \LL}$ can be computed in 
     %$O\left(n_{q+1}^\LL \times(n_{q}^\LL - n_q^\KK) + n_{q+1}^\LL \times n_q^\KK + (n_q^\KK)^2\right)$ arithmetic operations.
\end{theorem}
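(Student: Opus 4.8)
The plan is to apply \cref{thm_memoli} with the particular basis produced by the weak column reduction of \cref{prop:pers_lap}, and then to identify the Gram-type matrix $W_{q+1}^{\LL,\KK}=\bigl(Z^T(W_{q+1}^\LL)^{-1}Z\bigr)^{-1}$ with $W_{q+1}^{\LL}(C_0,C_0)\oplus S^{-1}$. By \cref{prop:pers_lap}, the matrix $Z=V(:,\{c_0^1,\dots,c_0^h,c_1^{l_1},\dots,c_r^{l_r}\})$ is a basis matrix for $C_{q+1}^{\LL,\KK}$ and $B_{q+1}^{\LL,\KK}$ is as described there, so \cref{thm_memoli} (i.e.\ \eqref{eqn:per_lap}) already gives \eqref{eq:uplap_1} with $W_{q+1}^{\LL,\KK}$ replaced by $\bigl(Z^T(W_{q+1}^\LL)^{-1}Z\bigr)^{-1}$. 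Hence everything reduces to the claim that $Z^T(W_{q+1}^\LL)^{-1}Z = W_{q+1}^\LL(C_0,C_0)^{-1}\oplus S$, after which the conclusion follows since inverting a block-diagonal matrix inverts each block and inverting a diagonal matrix reciprocates its entries.

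The core of the argument is a diagonalization based on the disjoint-support property. First I would read off the columns of $Z$. Each column indexed by $c_0^i$ is $\pm e_{c_0^i}$: since $c_0^i$ is a zero column of $D_{q+1}^{\LL,\KK}$, it corresponds to an isolated vertex (with no loop) of the graph associated to $|D_{q+1}^{\LL,\KK}|^T$, and the weak reduction fixes the zero columns and acts separately on each component, so $V(:,c_0^i)=\pm e_{c_0^i}$ with support $\{c_0^i\}$. Each column indexed by $c_j^{l_j}$ is, by \cref{thm:weakfactor} and \cref{thm:main_thm}, one of the orthogonal kernel generators: it has support exactly $C_j$ and all its nonzero entries lie in $\{-1,1\}$. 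The crucial structural input is that all of these supports are pairwise disjoint: the $c_0^i$ are distinct isolated vertices, the $C_j$ are distinct connected components and hence mutually disjoint, and no $c_0^i$ lies in any $C_j$ because $c_0^i$ is isolated. Since $(W_{q+1}^\LL)^{-1}$ is diagonal, $u^T(W_{q+1}^\LL)^{-1}v=0$ whenever $\operatorname{supp}(u)\cap\operatorname{supp}(v)=\emptyset$, so $Z^T(W_{q+1}^\LL)^{-1}Z$ is diagonal.

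It then remains to compute its diagonal entries. In the slot for $c_0^i$ we get $(\pm e_{c_0^i})^T(W_{q+1}^\LL)^{-1}(\pm e_{c_0^i})=\bigl(W_{q+1}^\LL(c_0^i,c_0^i)\bigr)^{-1}$, the signs squaring away, and these entries assemble into the block $W_{q+1}^\LL(C_0,C_0)^{-1}$. In the slot for $c_j^{l_j}$ we get $\sum_{k\in C_j}\bigl(V(k,c_j^{l_j})\bigr)^2\bigl(W_{q+1}^\LL(k,k)\bigr)^{-1}=\sum_{k\in C_j}\bigl(W_{q+1}^\LL(k,k)\bigr)^{-1}=s_j$, using $V(k,c_j^{l_j})^2=1$ on $C_j$; these assemble into $S=\diag(s_1,\dots,s_r)$. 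Substituting $W_{q+1}^{\LL,\KK}=\bigl(Z^T(W_{q+1}^\LL)^{-1}Z\bigr)^{-1}=W_{q+1}^\LL(C_0,C_0)\oplus S^{-1}$ into \eqref{eqn:per_lap} yields \eqref{eq:uplap_1}.

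I do not expect a genuine obstacle: the substantive work has been front-loaded into \cref{thm:weakfactor} and \cref{prop:pers_lap}, and what is left is a short computation. The only points requiring care are the bookkeeping that $V(:,c_0^i)=\pm e_{c_0^i}$ (from the fact that the reduction leaves zero columns untouched and treats each graph component independently) and the verification that $Z^T(W_{q+1}^\LL)^{-1}Z$ is actually invertible so that $W_{q+1}^{\LL,\KK}$ is well defined; the latter is automatic because $Z$ has full column rank and $(W_{q+1}^\LL)^{-1}$ is positive definite, which also re-derives the explicit diagonal form above.
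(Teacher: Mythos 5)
Your proposal is correct and follows essentially the same route as the paper: both apply \cref{thm_memoli} with the basis $Z$ and the matrix $B_{q+1}^{\LL,\KK}$ supplied by \cref{prop:pers_lap}, and identify $\bigl(Z^T(W_{q+1}^\LL)^{-1}Z\bigr)^{-1}$ with $W_{q+1}^{\LL}(C_0,C_0)\oplus S^{-1}$ using the disjointness of the supports of the relevant columns of $V$. The only difference is that you write out the diagonal Gram-matrix computation explicitly (and correctly note that disjoint supports, not mere standard-inner-product orthogonality, is what makes the weighted Gram matrix diagonal), a step the paper leaves implicit.
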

\begin{proof}
    Since the columns of $V(:, \{c_1^{l_1}\cdots, c_r^{l_r}\})$ are orthogonal to each other (with respect to the standard inner product), the statement is immediate from Equation \eqref{eqn:per_lap} and \cref{prop:pers_lap}.

    \iffalse
    With respect to the arithmetic operations,
    we first observe that according to \cref{thm:main_thm}, it takes $O\left((n_{q}^\LL - n_q^\KK)\times n_{q+1}^\LL \right)$ to compute the weak column reduction of $D_{q+1}^{\LL, \KK}$,
    and it takes $O(n_q^\KK\times n_{q+1}^\LL)$ and $O(n_{q+1}^\LL)$ to compute the matrix $B_{q+1}^{\LL, \KK}$ and $S$ respectively.
    Since the matrix $B_{q+1}^{\LL, \KK}$ is non-branching and $S$ is diagonal,
    the products of all matrices can be computed in $O((n_q^\KK)^2)$.
    Combined, we arrive at the bound in the statement.
    \fi
\end{proof}

\begin{example}\label{ex_2}
Consider the pair of simplicial complexes $\KK\hookrightarrow \LL$ in \cref{fig_mobius}. Here, the simplicial complex $\LL$ is a triangulation of the of M\"obius strip, and $\KK$ is its boundary. The matrix representations of $\partial_2^{\LL}$ and $D_2^{\LL, \KK}$ can be expressed as:
\[
B_2^{\LL}
=
\begin{bNiceMatrix}[
  first-row,code-for-first-row=\scriptstyle,
  first-col,code-for-first-col=\scriptstyle,
]
& [134] & [124] & [245] & [235] & [135]\\
[34] & 1 & 0 & 0 & 0 &0 \\
[14] & -1 & -1 & 0 & 0 & 0 \\
[13] & 1 & 0 & 0 & 0 & 1\\
[24] & 0 & 1 & 1 & 0 & 0 \\
[12] & 0 & 1 & 0 & 0 & 0\\
[45] & 0 & 0 & 1 & 0 & 0\\
[25] & 0 & 0 & -1 & -1 & 0 \\
[35] & 0 & 0 & 0 & 1 & 1 \\
[23] & 0 & 0 & 0 & 1 & 0\\
[15] & 0 & 0 & 0 & 0 & -1\\
\end{bNiceMatrix},\qquad\qquad 
D_2^{\LL, \KK}
=
\begin{bNiceMatrix}[
  first-row,code-for-first-row=\scriptstyle,
  first-col,code-for-first-col=\scriptstyle,
]
& [134] & [124] & [245] & [235] & [135]\\
[14] & -1 & -1 & 0 & 0 & 0 \\
[13] & 1 & 0 & 0 & 0 & 1 \\
[24] & 0 & 1 & 1 & 0 & 0 \\
[25] & 0 & 0 & -1 & -1 & 0 \\
[35] & 0 & 0 & 0 & 1 & 1 \\
\end{bNiceMatrix}.
\]
\begin{figure}[h]
  %\centering
  \begin{subfigure}[b]{0.4\textwidth}
    \begin{tikzpicture}
    % Define the length of the side of the triangle
    \def\side{2.251666}

    % Draw the equilateral triangle centered at (0,0)
    %\filldraw[fill=yellow!40] (90: \side) -- (210: \side) -- (330: \side) -- cycle;

    \filldraw[fill=yellow!40] (0, 0) -- (\side, 0) -- (1/2 * \side, \side * 0.8660254) -- cycle;
    \filldraw[fill=yellow!40] (\side, 0) -- (1/2 * \side, \side * 0.8660254) -- (\side * 3/2, \side * 1.732051 / 2) -- cycle;
    \filldraw[fill=yellow!40] (\side, 0) -- (2 * \side, 0) -- (\side * 3/2, \side * 1.732051 / 2) -- cycle;
    \filldraw[fill=yellow!40] (\side * 2, 0) -- (5/2 * \side, \side * 0.8660254) -- (\side * 3/2, \side * 1.732051 / 2) -- cycle;
    \filldraw[fill=yellow!40] (5/2 * \side,  \side * 0.8660254) -- (2 * \side, -0) -- (3 * \side, -0) -- cycle;

    % Connect the center with all three vertices
    %\draw (0,0) -- (90: \side);
    %\draw (0,0) -- (210: \side);
    %\draw (0,0) -- (330: \side);

    %\draw (0, -1/2) -- (3, 4);

    % Label the vertices and the center
    \node at (1/2 * \side,  \side * 0.8 + 0.4) {1};
    \node at (3/2 * \side,  \side * 0.8 + 0.4) {2};
    \node at (5/2 * \side,  \side * 0.8 + 0.4) {3};
    \node at (0, -0.3) {3};
    \node at (\side, -0.3) {4};
    \node at (2 * \side, -0.3) {5};
    \node at (3 * \side, -0.3) {1};
    %\node at (-0.2,0) [below right] {4}; % You can adjust the position of this label

    % Optionally, mark the center
    \fill (0, 0) circle (2pt);  % Vertex 4
    \fill (1/2 * \side, \side * 0.8660254) circle (2pt); % Vertex 1
    \fill (3/2 * \side, \side * 0.8660254) circle (2pt);
    \fill (5/2 * \side, \side * 0.8660254) circle (2pt);
    \fill (\side, 0) circle (2pt);
    \fill (2 * \side, 0) circle (2pt);
    \fill (3 * \side, -0) circle (2pt);
\end{tikzpicture}
\caption{$\LL$.}%
\end{subfigure}
\qquad\qquad
\begin{subfigure}[b]{0.4\textwidth}
    \begin{tikzpicture}
    % Define the length of the side of the triangle
    \def\side{2.251666}

    % Draw the equilateral triangle centered at (0,0)
    %\filldraw[fill=yellow!40] (90: \side) -- (210: \side) -- (330: \side) -- cycle;

    %\draw[draw=none,fill=white, fill opacity=1] (0, 0) -- (\side, 0) -- (1/2 * \side, \side * 0.8660254) -- cycle;
    
    %\draw (\side, 0) -- (1/2 * \side, \side * 0.8660254) -- (\side * 3/2, \side * 1.732051 / 2) -- cycle;
    \draw (\side, 0) -- (2 * \side, 0);
    
    \draw (0, 0) -- (\side, 0);
   % \draw[white] (\side,0) -- (1/2 * \side, \side * 0.8660254);
    \draw (1/2 * \side, \side * 0.8660254) -- (3/2 * \side, \side * 0.8660254);
    
    \draw (\side * 3/2, \side * 1.732051 / 2) -- (5/2 * \side, \side * 0.8660254);
    \draw (2 * \side, -0) -- (3 * \side, -0);

    % Connect the center with all three vertices
    %\draw (0,0) -- (90: \side);
    %\draw (0,0) -- (210: \side);
    %\draw (0,0) -- (330: \side);

    %\draw (0, -1/2) -- (3, 4);

    % Label the vertices and the center
    \node at (1/2 * \side,  \side * 0.8 + 0.4) {1};
    \node at (3/2 * \side,  \side * 0.8 + 0.4) {2};
    \node at (5/2 * \side,  \side * 0.8 + 0.4) {3};
    \node at (0, -0.3) {3};
    \node at (\side, -0.3) {4};
    \node at (2 * \side, -0.3) {5};
    \node at (3 * \side, -0.3) {1};
    %\node at (-0.2,0) [below right] {4}; % You can adjust the position of this label

    % Optionally, mark the center
    \fill (0, 0) circle (2pt);  % Vertex 4
    \fill (1/2 * \side, \side * 0.8660254) circle (2pt); % Vertex 1
    \fill (3/2 * \side, \side * 0.8660254) circle (2pt);
    \fill (5/2 * \side, \side * 0.8660254) circle (2pt);
    \fill (\side, 0) circle (2pt);
    \fill (2 * \side, 0) circle (2pt);
    \fill (3 * \side, -0) circle (2pt);
\end{tikzpicture}
\caption{$\KK$.}%
  \end{subfigure}
\caption{Simplicial complexes $\KK\hookrightarrow \LL$ associated with Example \ref{ex_2}.}
\label{fig_mobius}
\end{figure}
It is easy to check that $D_2^{\LL, \KK}$ is irregular (\cref{def:reg_mtrx}), and that there is only one component in the undirected graph associated to $|D_2^{\LL, \KK}|^T$. Hence $D_2^{\LL, \KK}$ has full column rank and thus $\Delta_{1, \mathrm{up}}^{\KK, \LL}=0$.
\end{example}

In the following proposition we show that the up persistent Laplacian $\Delta_{q, \mathrm{up}}^{\KK, \LL}$ can be computed in quadratic arithmetic operations,
given that the input data are in the format of sparse (CSR) matrix.
We exhibit the pseudocode of computing up persistent Laplacian in \cref{alg:pers_lap}.

\begin{proposition}
\label{prop:alg_lap}
Let $\KK\hookrightarrow \LL$ be a pair of simplicial complexes as described in \cref{thm:fastcomputation}.
The up persistent Laplacian $\Delta_{q, \text{up}}^{\KK, \LL}$ can be computed in $O\left((n_q^\LL-n_q^\KK)\times \alpha(n_{q+1}^\LL) + n_{q+1}^\LL+(n_q^\KK)^2\right)$.
\end{proposition}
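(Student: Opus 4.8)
The plan is to track the cost of each subroutine needed to assemble Equation~\eqref{eq:uplap_1} and verify that the total is as claimed. First I would invoke \cref{prop:alg_weak_red} applied to the matrix $D_{q+1}^{\LL,\KK}$, which is a non-branching matrix of size roughly $(n_q^\LL - n_q^\KK) \times n_{q+1}^\LL$; computing its weak column reduction $R = D_{q+1}^{\LL,\KK} V$ (equivalently, the flag matrix $E$ and the upper-triangular $U$, so $V = EU$) costs $O\bigl((n_q^\LL - n_q^\KK)\times \alpha(n_{q+1}^\LL) + n_{q+1}^\LL\bigr)$ arithmetic operations. From $R$ we read off, in time linear in $n_{q+1}^\LL$, the index sets: the zero columns $C_0$ of $D_{q+1}^{\LL,\KK}$, and the component column-blocks $C_1, \dots, C_r$ corresponding to the regulable components (together with the representative indices $c_j^{l_j}$). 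This is all bookkeeping over the DSU output, well within the stated budget.

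Next I would account for the construction of the reduced boundary matrix $B_{q+1}^{\LL,\KK}$ as described in \cref{prop:pers_lap}: its columns are either a single column $B_{q+1}^\LL(I_q^\KK, C_0)$ directly extracted from $B_{q+1}^\LL$, or a linear combination $b_j = B_{q+1}^\LL(I_q^\KK, C_j)\, V(C_j, c_j^{l_j})$ over a component block. Since $B_{q+1}^\LL$ is non-branching, each of its columns has at most two nonzero entries, so summing the (at most $|C_j|$ signed) columns of $B_{q+1}^\LL(I_q^\KK, C_j)$ costs $O(|C_j|)$; since the $C_j$ together with $C_0$ partition a subset of the columns of $D_{q+1}^\LL$, the total cost of forming $B_{q+1}^{\LL,\KK}$ is $O(n_{q+1}^\LL)$. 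Likewise the diagonal entries $s_j = \sum_{k \in C_j} 1/W_{q+1}^\LL(k,k)$ and the diagonal matrix $W_{q+1}^{\LL,\KK} = W_{q+1}^\LL(C_0,C_0) \oplus S^{-1}$ are computed by one pass over the weights, costing $O(n_{q+1}^\LL)$.

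Finally I would bound the cost of the matrix arithmetic in \eqref{eq:uplap_1}. A key structural observation is that $B_{q+1}^{\LL,\KK}$ has at most $n_q^\KK$ rows (indexed by $I_q^\KK$) and at most $n_q^\KK$ columns (one per zero column of $R$, and the rank of $D_{q+1}^{\LL,\KK}$ plus the number of kernel columns is at most the number of columns, which is bounded — more simply, the number of columns of $B_{q+1}^{\LL,\KK}$ equals $h + r \le n_{q+1}^\LL$, but the product $B_{q+1}^{\LL,\KK} W_{q+1}^{\LL,\KK} (B_{q+1}^{\LL,\KK})^T$ lands in $M_{n_q^\KK \times n_q^\KK}$). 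The matrix $B_{q+1}^{\LL,\KK}$ is itself non-branching (each column, being a $\pm$-sum of boundary columns over a connected component that is a face of at most two $(q+1)$-simplices within $\KK$, has at most two nonzero entries), so multiplying $B_{q+1}^{\LL,\KK}$ by the diagonal $W_{q+1}^{\LL,\KK}$ costs $O(n_{q+1}^\LL)$, and then $\bigl(B_{q+1}^{\LL,\KK} W_{q+1}^{\LL,\KK}\bigr)(B_{q+1}^{\LL,\KK})^T$ is a product of a sparse matrix with $O(1)$ nonzeros per column times its transpose, yielding an $n_q^\KK \times n_q^\KK$ matrix in $O((n_q^\KK)^2)$ operations; the final right-multiplication by the diagonal $(W_q^\KK)^{-1}$ is another $O((n_q^\KK)^2)$. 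Summing the three contributions — the weak column reduction $O((n_q^\LL - n_q^\KK)\,\alpha(n_{q+1}^\LL) + n_{q+1}^\LL)$, the assembly of $B_{q+1}^{\LL,\KK}$, $S$, and weights $O(n_{q+1}^\LL)$, and the matrix products $O((n_q^\KK)^2)$ — gives the claimed bound. The main subtlety to get right is the sparsity of $B_{q+1}^{\LL,\KK}$ and the precise dimensions of the intermediate products, so that the dense $(n_q^\KK)^2$ term genuinely only appears in the outer products and not in forming $B_{q+1}^{\LL,\KK}$ itself.
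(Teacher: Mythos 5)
Your overall route is the same as the paper's: apply \cref{prop:alg_weak_red} to $D_{q+1}^{\LL,\KK}$, assemble $B_{q+1}^{\LL,\KK}$ and the diagonal weight matrices in (near-)linear time, and charge the quadratic term to the final matrix products. However, the sparsity justification you give for that last step contains a genuine error: non-branching is a condition on \emph{rows}, not columns. Each column of $B_{q+1}^{\LL}$ is the boundary of a $(q+1)$-simplex and has exactly $q+2$ nonzero entries, not at most two; and the columns of $B_{q+1}^{\LL,\KK}$ can have arbitrarily many nonzero entries --- the paper's \cref{ex:dense} (a triangulated $n$-gon with $\KK$ its boundary) produces a single column of $B_{q+1}^{\LL,\KK}$ with $n$ nonzeros. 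So your claim that $B_{q+1}^{\LL,\KK}$ is ``a sparse matrix with $O(1)$ nonzeros per column'' is false, and the step in which you deduce the $O\big((n_q^\KK)^2\big)$ cost of $B_{q+1}^{\LL,\KK} W_{q+1}^{\LL,\KK}\big(B_{q+1}^{\LL,\KK}\big)^T$ from that premise does not stand as written. This matters because the number of columns $h+r$ of $B_{q+1}^{\LL,\KK}$ need not be bounded by $n_q^\KK$, so a naive dense product costing $O\big((n_q^\KK)^2(h+r)\big)$ would exceed the stated budget; some correct sparsity argument is genuinely required here.

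The repair is exactly the argument the paper uses, and it is row-based: each row of $B_{q+1}^{\LL}(I_q^\KK,:)$ has at most two nonzero entries, and since the column blocks $C_0, C_1,\dots,C_r$ are pairwise disjoint, each row of $B_{q+1}^{\LL,\KK}$ again has at most two nonzeros, giving at most $2n_q^\KK$ nonzero entries in total. Writing $B W B^T=\sum_j W_{jj}\,B(:,j)B(:,j)^T$ with $c_j$ the number of nonzeros of column $j$, the cost is $\sum_j c_j^2\le \big(\max_j c_j\big)\sum_j c_j\le n_q^\KK\cdot 2n_q^\KK=O\big((n_q^\KK)^2\big)$, and the final multiplication by the diagonal $(W_q^\KK)^{-1}$ adds another $O\big((n_q^\KK)^2\big)$. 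The same row-sparsity bound also shows that assembling $B_{q+1}^{\LL,\KK}$ costs only $O(n_q^\KK)$ beyond the $O(n_{q+1}^\LL)$ bookkeeping you already budget, which is how the paper accounts for that step. With this substitution your proof coincides with the paper's and the claimed bound follows.
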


\begin{proof}
Computing the weak column reduction of the matrix $D_{q+1}^{\LL, \KK}$ takes $O\left((n_q^\LL-n_q^\KK)\times \alpha(n_{q+1}^\LL) + n_{q+1}^\LL\right)$ according to \cref{prop:alg_weak_red}.
Since the matrix $B_{q+1}^\LL$ is non-branching and $W_{q+1}^{\LL, \KK}$ is diagonal,
there are at most $2n_{q}^\KK$ non-zero entries in $B_{q+1}^{\LL,\KK}$ and $n_{q}^\KK$ non-zero entries in $W_{q+1}^{\LL, \KK}$, respectively,
hence the computation of $B_{q+1}^{\LL, \KK}$ and $W_{q+1}^{\LL, \KK}$ takes $O(n_q^\KK)$.
Finally,
computing the matrix product of $\Delta_{q, \text{up}}^{\KK, \LL}$ takes $O((n_q^\KK)^2)$.
To sum up,
it takes $O\left((n_q^\LL-n_q^\KK)\times \alpha(n_{q+1}^\LL) + n_{q+1}^\LL+(n_q^\KK)^2\right)$ to compute $\Delta_{q, \text{up}}^{\KK, \LL}$.
\end{proof}

\begin{algorithm}[H]
 \caption{\textsc{UpPersistentLaplacian}}
 \label{alg:pers_lap}
 \begin{algorithmic}[1]
 \State \textbf{Input:} $I_q^\KK$, $B_{q+1}^\LL$, $W_{q+1}^\LL$, $W_{q}^\KK$
 %\Comment{$B_{q+1}^\LL$ is an oriented matrix representation}
 \State \textbf{Output:} $\Delta_{q, \mathrm{up}}^{\KK, \LL}$
\State $\Delta_{q, \mathrm{up}}^{\KK, \LL}\gets 0_{n_q^\KK\times n_q^\KK}$ 
\Comment{Initialize $\Delta_{q, \mathrm{up}}^{\KK, \LL}$}
 \If{$n_q^\KK=n_q^\LL$}
 \State $\Delta_{q, \mathrm{up}}^{\KK, \LL}\gets B_q^\LL \left(B_q^\LL\right)^T$
 \Else
 \State $D_{q+1}^{\LL, \KK}\gets B_{q+1}^\LL(I_q^{\LL, \KK}, :)$

 \State $C_0\gets \text{ indices of zero columns in }D_{q+1}^{\LL, \KK}$
 \State $C_{\text{nonzero}}\gets \text{indcies of non-zero columns in }D_{q+1}^\LL$
 
%\Statex

\State{$R, E, V\gets \textsc{WeakColumnReduction}(D_{q+1}^\LL(:, C_{\text{nonzero}}))$}

\Statex

\For{$j$ in the zero column indices of $R$}

\State $C_j\gets \text{support indices of the column vector }V(:, j)$
\State $s_j\gets \sum_{i\in C_j}\frac{1}{W_{q+1}^{\LL}(i, i)}$
\State $b_j\gets \text{the sum of column vectors in } B_{q+1}^\LL(I_q^\KK, I_j)E(C_j, C_j)$
\EndFor

%\State $B_{q+1}^{\LL, \KK}\gets \text{stack the set of column vectors}\{B_j\} \text{ as a matrix}$
\State $B_{q+1}^{\LL, \KK}\gets
    B_{q+1}^\LL(I_q^{\KK}, C_0) \#
    \begin{bmatrix}
    b_1 & b_2 & \cdots & b_r
    \end{bmatrix}$

%\State $W_{q+1}^{\LL, \KK}\gets \text{ diagonal matrix with diagonal entries being } \{w_j\}$

\State $W_{q+1}^{\LL, \KK}\gets W_{q+1}^\LL(C_0, C_0)\oplus S^{-1}$

\State $\Delta_{q, \text{up}}^{\KK, \LL}\gets B_{q+1}^{\LL, \KK}W_{q+1}^{\LL, \KK}\left(B_{q+1}^{\LL, \KK}\right)^T (W_q^\KK)^{-1}$

% \Else 
% \State $\Delta_{q, \text{up}}^{\KK, \LL}\gets 0_{n_q^\KK\times n_q^\KK}$
% \EndIf
\EndIf
\end{algorithmic}
\end{algorithm}

\begin{remark}\label{rmk:cmpt_time}
In \cref{prop:alg_lap},
we notice that the quadratic arithmetic operations of $\Delta_{q, \text{up}}^{\KK, \LL}$ is due to the matrix product operation, 
which is reasonable since even if all the matrices $B_{q+1}^{\LL, \KK}$, $W_{q+1}^{\LL, \KK}$ and $W_q^\KK$ are in the sparse format, 
we have to scan over every entry of $\Delta_{q, \text{up}}^{\KK, \LL}$.
Therefore if we only compute the pair of matrices $(B_{q+1}^{\LL, \KK}, W_{q+1}^{\LL, \KK})$ instead of the up persistent Laplacian $\Delta_{q, \text{up}}^{\KK, \LL}$ explicitly,
it takes $O\left((n_q^\LL-n_q^\KK)\times \alpha(n_{q+1}^\LL) + n_{q+1}^\LL+n_q^\KK\right)$ arithmetic operations.
\end{remark}

\subsection{The up persistent Laplacian for filtrations}\label{sec:filtration}

One advantage of DSU algorithm is that it 
supports near-constant amortized time complexity 
in handling dynamic graphs that only undergo edge additions.
As a result,
we can compute the weak column reduction with respect to a sequence of non-branching matrices efficiently.

\begin{lemma}\label{lemma:D_filtration}
Let $\{D_0, D_1, \cdots, D_m\}$ be a sequence of non-branching matrices where each $D_{i}$ contains exactly one more row than $D_{i-1}$ for each $1\leq i\leq m$.
Suppose $D_0$ is a $k\times l$ matrix,
then we can find all the weak column reduction of all $\{D_0, D_1, \cdots, D_m\}$ in
$$O\left(m\times (k+m+l) + (k+m)\times \alpha(l)\right)$$ arithmetic operations.
% $O(k\times l + m\times l)$ 
\end{lemma}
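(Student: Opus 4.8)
The plan is to prove \cref{lemma:D_filtration} by induction on $m$, exploiting the incremental nature of the DSU data structure. The base case $m=0$ is exactly \cref{prop:alg_weak_red}: computing the weak column reduction of $D_0$, a $k\times l$ non-branching matrix, takes $O(k\times\alpha(l)+l)$ operations. The key observation for the inductive step is that passing from $D_{i-1}$ to $D_i$ appends exactly one new row, which in the graph picture of \cref{subsec:non_branching_mtx} means adding exactly one new edge (or one loop) to the undirected graph $G_{i-1}$ associated with $|D_{i-1}|^T$. Crucially, the vertex set (the $l$ columns) never changes, so the DSU structure can be reused: the new edge triggers at most one \textsc{Union} call, costing $O(\alpha(l))$ amortized, and the connected-component partition is updated in place rather than recomputed from scratch.

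The steps I would carry out, in order: First, set up the induction and state that after processing $D_{i-1}$ we maintain the DSU \texttt{parent} array, the set of roots of components containing loops, and the reorientation flags — all of which together encode the weak column reduction data. Second, analyze the cost of the update when the new row of $D_i$ has two nonzero entries at columns $u,v$: call \textsc{Find} on $u$ and $v$ ($O(\alpha(l))$), and if they lie in different components, \textsc{Union} them ($O(\alpha(l))$); if they lie in the same component, this adds a cycle and may turn a regulable component irregular — re-running \textsc{Reorientation} on the affected component costs $O(|J_r|)$ where $J_r$ is its row set. Third, handle the loop case (one nonzero entry), which merely marks the component's root as loop-bearing in $O(\alpha(l))$. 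Fourth, after the final matrix $D_m$ is incorporated, we must actually emit the weak column reductions $R_i = D_i E_i V_i$; since each $R_i$ can differ from $R_{i-1}$ in only the columns of the (at most one) affected component, and since extracting the explicit matrices $R_i$, $E_i$, $V_i$ from the DSU state for a $k_i\times l$ matrix (with $k_i = k+i$) costs $O(k_i + l) = O(k+m+l)$, summing over the $m+1$ steps gives $O(m\times(k+m+l))$. Finally, add up: the DSU operations over the whole sequence cost $O((k+m)\times\alpha(l))$ since there are $k+m$ total rows/edges processed across all matrices, plus the $O(l)$ initialization and the $O(m\times(k+m+l))$ for emitting outputs, yielding the claimed bound $O(m\times(k+m+l) + (k+m)\times\alpha(l))$.

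The main obstacle I anticipate is bounding the cost of the \textsc{Reorientation} re-checks across the filtration. A naive bound would re-run \textsc{Reorientation} on a component every time an edge is added to it, and a component could receive many edges, so one must argue this does not blow up the total. The clean way is to fold the reorientation check into the per-edge processing: when adding an edge $(u,v)$ within an already-connected component, one only needs to compare $\mathrm{flag}[u]\cdot D_i(\text{new row},u)$ with $\mathrm{flag}[v]\cdot D_i(\text{new row},v)$ — an $O(1)$ check — to decide whether the component becomes non-orientable, rather than re-scanning the whole component; when merging two components via \textsc{Union}, one propagates a sign correction, which is the same book-keeping already needed for DSU and fits within the $O(\alpha(l))$ amortized budget if one stores signs along the union-find tree (or, more crudely, accepts an $O(l)$-per-merge cost, which over $O(m)$ merges contributes $O(m\times l)$, already absorbed by the stated bound). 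Either way, the point to be careful about is that the emission of the explicit output matrices — not the DSU bookkeeping — is what forces the $O(m\times(k+m+l))$ term, and one should make explicit that the problem statement asks for all $m+1$ reductions, each of a matrix with up to $k+m$ rows.
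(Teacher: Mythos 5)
Your proposal is correct and follows essentially the same route as the paper: an inductive/incremental argument where each appended row adds one edge or loop to the component graph, handled by a single \textsc{Find}/\textsc{Union} in $O(\alpha(l))$ amortized time, with the explicit matrices $R_i$, $E_i$, $V_i$ updated per step at cost $O(k+m+l)$ on top of the base case from \cref{prop:alg_weak_red}. Your extra care about maintaining orientability incrementally (the $O(1)$ flag comparison when an edge lands inside an existing component) makes explicit a point the paper only treats implicitly through its case analysis, but it does not change the overall argument or the bound.
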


\begin{proof}
Without loss of generality,
we assume that in the matrix $D_{i}$ the last row is the one more extra row than $D_{i-1}$ for each $1\leq i\leq m$.
We denote by $G_i$ the undirected graph whose incidence matrix is $|D_i|^T$.
According to \cref{prop:alg_weak_red} it takes $O(k\times \alpha(l)+l)$ to compute the weak column reduction with respect to $D_0$.

To compute that of $D_1$, 
we need to first find the connected components of $G_1$,
since $G_1$ contains exactly one more edge or loop than $G_0$,
either two components in $G_0$ are getting connected or one component in $G_0$ is added one more self-loop,
hence we only need to apply \textsc{Union} and \textsc{Find} actions once hence the arithmetic operation is $\alpha(l)$.
According to the proof of \cref{thm:weakfactor} in \cref{subsec:non_branching_mtx},
to compute the weak column reduction of $D_1$ we only need to check whether the new component in $G_1$ is regular or not.
In more detail,
suppose $C_1, C_2, \cdots, C_r$ are the regulable components in $D_0$,
i.e.,
$D_0(:, C_i)$ is a regulable matrix for each $1\leq i\leq r$,
and the weak column reduction is $R_0=D_0 E_0 V_0$.
In the case that the row vector $D_1(-1, :)$ contains one non-zero entry,
whose column index is $j_1$,
we then only need to check if $j_1$ is in any of the regulable components,
if so,
say,
$j_1\in C_i$
,
then $R_1$ is given by replacing $R_0(:, j_1)$ by $D_0(:, j_1)$ and 
adding one more additional row vector $D_1(-1, :)$ in the bottom.
If $j_1$ does not belong to any of $C_i$ for $1\leq i\leq r$,
it means that the regulable components are not affected by this additional loop,
hence we can get $R_1$ simply be appending the row vector $D_1(-1, :)$ in the bottom of the matrix $R_0$.
Similarly,
in the case that the row vector $D_1(-1, :)$ contains two non-zero entries,
that $D_1(-1, j_1)=1$ and $D_1(-1, j_2)=-1$,
with the corresponding column indices being $j_1 < j_2$,
if $j_1\in C_{i_1}$ and $j_2\in C_{i_2}$ for some $1\leq i_1, i_2\leq r$,
then the \textsc{Union} action unite the two sets $C_{i_1}$ and $C_{i_2}$ into one set,
and to compute $R_1$,
we only need to replace $R_0(:, j_1)$ by $D_{0}(:, j_1)$,
and append one more additional row vector 
$\begin{bNiceMatrix}[
  first-row,code-for-first-row=\scriptstyle,
]
   &            & j_1 &  \\
0 & \cdots & 1   & 0  \cdots & 0\\
\end{bNiceMatrix},$
in the bottom.
In the other cases,
the discussions are similar.
Notice that in each case to calculate $R_1$ we only apply column action to one or two columns in $R_0$ and append an extra row in the bottom of $R_0$,
thus it costs $O(k+l)$ arithmetic operations.
In a similar manner,
we can calculate $E_1$ and $V_1$ out of $E_0$ and $V_0$ in $O(l)$.
Hence the weak column reduction of $D_1$ can be computed in $O(k+l)$.
Similarly,
we can compute the weak column reduction of $D_i$ from that of $D_{i-1}$ for $1\leq i\leq m$.
Thus to sum up it takes $O\left(m\times (k+m+l) + (k+m)\times \alpha(l)\right)$.
\end{proof}

\begin{remark}\label{rmk:filt_weak_red}
In some situations,
we are only interested in the null space of $R_i$ and it is not necessary to write out $R_i$ explicitly.
On the other hand, 
in the proof of \cref{lemma:D_filtration} we observe that when computing $R_i$,
only by applying \textsc{Union} and \textsc{Find} once we can get the indices of zero columns in $R_i$,
which takes $O(\alpha(l))$.
Therefore it takes $O\left((k+m)\times \alpha(l)+l\right)$ to find the dimension of null spaces of each matrix in $\{D_0, \cdots, D_m\}$. 
\end{remark}

\iffalse
Consider a simplicial filtration $\KK_0\hookrightarrow\KK_1\hookrightarrow \LL$ where $\KK_1$ contains exactly one more $q$-simplex than $\KK_0$.
According to \cref{lemma:D_filtration} the weak column reduction of $D_{q+1}^{\LL, \KK_1}$ and $D_{q+1}^{\LL, \KK_0}$ can be calculated in $O(n_{q+1}^\LL\times (n_q^\LL-n_q^{\KK_1}) + n_{q+1}^\LL)$ arithmetic operations,
and it takes $O(n_{q}^{\KK_1}\times n_{q+1}^\LL+n_q^{\KK_1})$ to calculate both $B_{q+1}^{\LL, \KK_1}$ and $B_{q+1}^{\LL, \KK_0}$,
finally it takes $O((n_q^{\KK_1})^2)$ and $O((n_q^{\KK_0})^2)$ to compute $\Delta_{q, \mathrm{up}}^{\KK_1, \LL}$ and $\Delta_{q, \mathrm{up}}^{\KK_0, \LL}$.
Therefore in total the time complexity of computing the two up persistent Laplacians $\Delta_{q, \mathrm{up}}^{\KK_1, \LL}$ and $\Delta_{q, \mathrm{up}}^{\KK_0, \LL}$ is 
$O(n_{q+1}^\LL\times (n_q^\LL-n_q^{\KK_1}) +
n_{q+1}^\LL + O(n_{q}^{\KK_1}\times n_{q+1}^\LL+n_q^{\KK_1}) + 
(n_q^{\KK_1})^2
).$
By induction,
we have the following proposition:
\fi

Let $\KK_0\hookrightarrow\KK_1 \hookrightarrow\cdots\hookrightarrow \KK_m\hookrightarrow \LL$ be a $q$-non-branching simplicial filtration where each $\KK_i$ contains exactly one more $q$-simplex than $\KK_{i-1}$ for each $1\leq i\leq m$.
Without loss of generality,
we assume that the set of $q$-simplices $S_q^{\KK_i}$ in $\KK_i$ is ordered such that the additional $q$-simplex of $\KK_i$ is at the end of $S_q^{\KK_i}$,
and $S_q^{\KK_m}=\{\sigma_i|\sigma_i\in S_q^{\LL}\}_{i=1}^{n_q^{\KK_m}}$.

\begin{proposition}\label{prop:time_filtration}
The sequence of up persistent Laplacians $\{\Delta_{q, \mathrm{up}}^{\KK_i, \LL}\}_{i=0}^m$ can be computed in 
\[
O\left((m+1)\times (n_q^{\KK_m})^2+(m+1)\times n_{q+1}^\LL+(n_q^\LL-n_q^{\KK_m}+m)\times \alpha(n_{q+1}^\LL)\right)
\]
arithmetic operations.
\end{proposition}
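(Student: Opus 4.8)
The plan is to amortize the cost across the filtration using \cref{lemma:D_filtration}, and then add up the per-index costs of assembling each up persistent Laplacian from its weak column reduction, as in \cref{prop:alg_lap}. First I would set up the sequence of non-branching matrices to which \cref{lemma:D_filtration} applies. By the ordering convention on $S_q^{\KK_i}$, the index sets satisfy $I_q^{\LL,\KK_0}\supset I_q^{\LL,\KK_1}\supset\cdots\supset I_q^{\LL,\KK_m}$, with each $I_q^{\LL,\KK_i}$ having exactly one fewer element than $I_q^{\LL,\KK_{i-1}}$; consequently the matrices $D_{q+1}^{\LL,\KK_i}=B_{q+1}^\LL(I_q^{\LL,\KK_i},:)$ form a sequence where $D_{q+1}^{\LL,\KK_{i-1}}$ has exactly one more row than $D_{q+1}^{\LL,\KK_i}$. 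Reading this sequence in the order $D_{q+1}^{\LL,\KK_m}, D_{q+1}^{\LL,\KK_{m-1}},\ldots, D_{q+1}^{\LL,\KK_0}$, each matrix has exactly one more row than its predecessor, so \cref{lemma:D_filtration} applies with $k = n_q^\LL - n_q^{\KK_m}$ (the number of rows of $D_{q+1}^{\LL,\KK_m}$) and $l = n_{q+1}^\LL$ (the number of columns). This gives all $m+1$ weak column reductions in $O\big((n_q^\LL - n_q^{\KK_m})\cdot\alpha(n_{q+1}^\LL) + m\cdot\alpha(n_{q+1}^\LL) + m\cdot(n_q^\LL - n_q^{\KK_m} + m + n_{q+1}^\LL)\big)$ operations.

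Next I would account for converting each weak column reduction into the corresponding $\Delta_{q,\mathrm{up}}^{\KK_i,\LL}$. By the proof of \cref{prop:alg_lap}, given the weak column reduction of $D_{q+1}^{\LL,\KK_i}$, assembling $B_{q+1}^{\LL,\KK_i}$ and the diagonal matrix $W_{q+1}^{\LL,\KK_i}$ costs $O(n_q^{\KK_i})$ (there are at most $2n_q^{\KK_i}$ nonzero entries to write down), and forming the matrix product in \eqref{eq:uplap_1} costs $O((n_q^{\KK_i})^2)$. Since $n_q^{\KK_i}\le n_q^{\KK_m}$ for all $i$, summing over the $m+1$ values of $i$ contributes $O\big((m+1)(n_q^{\KK_m})^2\big)$. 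There is one subtlety to address: when $n_q^{\KK_i} = n_q^\LL$ (which can only happen for $i=m$ if $\KK_m = \LL$ in degree $q$), the algorithm takes the branch $\Delta_{q,\mathrm{up}}^{\KK_i,\LL} = B_q^\LL (B_q^\LL)^T$, which still costs $O((n_q^{\KK_i})^2)$, so the bound is unaffected.

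Finally I would collect the two contributions and simplify. The total is
\[
O\!\left((m+1)(n_q^{\KK_m})^2 + m\,n_{q+1}^\LL + (n_q^\LL - n_q^{\KK_m} + m)\,\alpha(n_{q+1}^\LL) + m(n_q^\LL - n_q^{\KK_m} + m)\right).
\]
The term $m(n_q^\LL - n_q^{\KK_m})$ is dominated by $(n_q^\LL - n_q^{\KK_m} + m)\cdot\alpha(n_{q+1}^\LL)$ only if $m = O(\alpha(n_{q+1}^\LL))$, which is not generally true; instead I would absorb $m(n_q^\LL - n_q^{\KK_m})$ and $m^2$ into the stated terms by noting that $m \le n_q^{\KK_m}$ (each step adds a distinct $q$-simplex of $\KK_m$), so $m(n_q^\LL - n_q^{\KK_m}) \le (m+1)(n_q^{\KK_m})^2$ after the crude bound $n_q^\LL - n_q^{\KK_m} \le (n_q^{\KK_m})^2$ is avoided — more carefully, $m^2 \le (m+1)(n_q^{\KK_m})^2$ and $m\,n_{q+1}^\LL \le (m+1)n_{q+1}^\LL$, and the remaining cross term is handled by observing it is at most $(n_q^\LL - n_q^{\KK_m})\cdot m \le (n_q^\LL - n_q^{\KK_m} + m)^2$, which one checks is dominated by the already-present quadratic and linear terms once $m\le n_q^{\KK_m}$. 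Writing $n_q^\LL - n_q^{\KK_m} + m$ for the amortized DSU cost then yields exactly
\[
O\!\left((m+1)(n_q^{\KK_m})^2 + (m+1)\,n_{q+1}^\LL + (n_q^\LL - n_q^{\KK_m} + m)\,\alpha(n_{q+1}^\LL)\right).
\]
The main obstacle I anticipate is precisely this last bookkeeping: matching the per-step $O(k+l)$ update cost of \cref{lemma:D_filtration} against the target bound, since naively summing gives an extra $m(n_q^\LL - n_q^{\KK_m} + n_{q+1}^\LL)$ term, and one must use $m \le n_q^{\KK_m}$ (and hence $m\,n_{q+1}^\LL$ being genuinely allowed by the $(m+1)n_{q+1}^\LL$ term, while $m\cdot(n_q^\LL - n_q^{\KK_m})$ and $m^2$ get absorbed into $(m+1)(n_q^{\KK_m})^2$) to collapse everything into the claimed expression.
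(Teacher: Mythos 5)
Your setup (reversing the filtration so that each $D_{q+1}^{\LL,\KK_i}$ gains one row, then assembling each $B_{q+1}^{\LL,\KK_i}$, $W_{q+1}^{\LL,\KK_i}$ and the product per index) is sound, but the final bookkeeping step contains a genuine gap. Applying \cref{lemma:D_filtration} as a black box with $k=n_q^\LL-n_q^{\KK_m}$ and $l=n_{q+1}^\LL$ unavoidably produces the term $m\times(n_q^\LL-n_q^{\KK_m})$, because the lemma's per-step cost $O(k+l)$ comes from actually writing out each reduced matrix $R^i$ (copying a column of length $k$ and appending a row). This term is \emph{not} dominated by the claimed bound, and your absorption argument does not work: $m(n_q^\LL-n_q^{\KK_m})\leq (m+1)(n_q^{\KK_m})^2$ would require $n_q^\LL-n_q^{\KK_m}\lesssim (m+1)\,n_q^{\KK_m}$, which fails because the gap between $\KK_m$ and $\LL$ is unconstrained; and the fallback bound $(n_q^\LL-n_q^{\KK_m}+m)^2$ is dominated by nothing in the statement, since the only term involving $n_q^\LL-n_q^{\KK_m}$ there is linear (with factor $\alpha(n_{q+1}^\LL)$). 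Concretely, take $m\sim n_q^{\KK_m}\sim n_{q+1}^\LL\sim t$ and $n_q^\LL\sim t^3$ (possible when many $q$-simplices of $\LL\setminus\KK_m$ are faces of no $(q+1)$-simplex, i.e.\ zero rows): the claimed bound is of order $t^3\alpha(t)$, while your total retains a term of order $t^4$.

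The paper's proof avoids this term by never materializing the matrices $R^i$ along the filtration. Only $\Delta_{q,\mathrm{up}}^{\KK_m,\LL}$ is computed from scratch via \cref{prop:alg_lap}; thereafter, for each step $i\mapsto i-1$ one uses \cref{rmk:filt_weak_red} to obtain the new zero-column indices with a single \textsc{Union}/\textsc{Find} in $O(\alpha(n_{q+1}^\LL))$, updates $E^{i-1},V^{i-1}$ in $O(n_{q+1}^\LL)$, and — crucially — derives $B_{q+1}^{\LL,\KK_{i-1}}$ incrementally from $B_{q+1}^{\LL,\KK_i}$ (a matrix with only $n_q^{\KK_i}\leq n_q^{\KK_m}$ rows) by modifying one or two columns and removing one row, at cost $O(n_q^{\KK_m})$, with an $O(1)$ update of $W_{q+1}^{\LL,\KK_{i-1}}$ and the $O((n_q^{\KK_{i-1}})^2)$ matrix product. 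Summing these per-step costs gives exactly the stated bound, with no $m\times(n_q^\LL-n_q^{\KK_m})$ contribution; to repair your argument you would need to replace the black-box use of \cref{lemma:D_filtration} by this implicit (zero-column-indices only) update, rather than try to absorb the extra term.
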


\begin{proof}
%Let $B_{q+1}^{i}$ be the boundary matrix with respect to $\partial_{q+1}^{\KK_i}$.
According to \cref{prop:alg_lap},
it takes $O\left((n_q^\LL-n_q^{\KK_m})\times \alpha(n_{q+1}^\LL) + n_{q+1}^\LL+(n_q^{\KK_m})^2\right)$ to compute $\Delta_q^{\KK_m, \LL}$.
Let $D_{q+1}^{\LL, \KK_i}:=B_{q+1}^\LL(I_{q}^{\LL, \KK_i}, :)$ and $R^{i}=D_{q+1}^{\LL, \KK_i}E^{i}V^{i}$ be the weak column reduction of $D_{q+1}^{\LL, \KK_i}$.
We then evaluate the arithmetic operations of computing $\Delta_q^{\KK_{m-1}, \LL}$.
First we need to find the indices of zero columns in $R^{m-1}$,
according to \cref{rmk:filt_weak_red} it costs nearly constant arithmetic operation $O(\alpha(n_{q+1}^\LL))$,
besides that,
we also need to compute $E^{m-1}$ and $V^{m-1}$,
which costs $O(n_{q+1}^\LL)$ as explained in the proof of \cref{lemma:D_filtration}.
We then need to compute $B_{q+1}^{\LL, \KK_{m-1}}$.
Recall that the columns in $B_{q+1}^{\LL, \KK_i}$ are one-to-one corresponding to the regulable components in $D_{q+1}^{\LL, \KK_i}$,
thus we can derive $B_{q+1}^{\LL, \KK_{m-1}}$ from $B_{q+1}^{\LL, \KK_m}$ by modifying one or two columns and removing one row,
which costs $O(n_{q}^{\KK_m})$.
Computing $W_{q+1}^{\LL, \KK_{m-1}}$ from $W_{q+1}^{\LL, \KK_{m}}$ costs $O(1)$,
therefore it takes $O(\alpha(n_{q+1}^\LL)+n_{q+1}^\LL+n_q^{\KK_m}+(n_{q}^{\KK_{m-1}})^2)$ to compute $\Delta_{q, \mathrm{up}}^{\KK_{m-1}, \LL}$,
where the quadratic term comes from computing the matrix product.
By iteration,
we obtain the arithmetic operations of $\{\Delta_{q, \mathrm{up}}^{\KK_i, \LL}\}_{i=0}^m$.
%we conclude that it takes $O\left(m\times (n_q^{\KK_m})^2+m\times n_{q+1}^\LL+(n_q^\LL-n_q^{\KK_m}+m)\times \alpha(n_{q+1}^\LL)\right)$ to compute all the up persistent Laplacians $\{\Delta_{q, \mathrm{up}}^{\KK_i, \LL}\}_{i=0}^m$.
\end{proof}

In \cref{prop:time_filtration} we showed that our algorithm requires cubic arithmetic operations to compute the set $\{\Delta_{q, \mathrm{up}}^{\KK_i, \LL}\}_{i=0}^m$. This cubic running time results from the need to perform matrix product operations, and although the matrices $B_{q+1}^{\LL, \KK_i}$, $W_{q+1}^{\LL, \KK}$ and $W_q^{\KK_i}$ are non-branching, 
it is not guaranteed the matrix $\Delta_{q, \mathrm{up}}^{\KK_i, \LL}$ is sparse. 

\begin{example}\label{ex:dense}
  Consider the pair of simplicial complexes $\KK\hookrightarrow \LL$ in \cref{fig:dense_example}. By a direct calculation we obtain that 
$B_{2}^{\LL, \KK}
=
\begin{bmatrix}
    1 & -1 & 1 & -1
\end{bmatrix}^T$,
and hence $\Delta_{1, \text{up}}^{\KK, \LL}$ is dense. In particular, obtaining each of the up persistent Laplacians in the filtration requires a quadratic number of arithmetic operations. This example generalizes to more edges (larger matrices) by triangulating an $n$-gon and letting $\KK$ be the outer boundary.
\begin{figure}[H]
  \centering
  \begin{subfigure}[b]{0.2\textwidth}
\begin{tikzpicture}
    % Define the length of the side of the triangle
    \def\side{1.}

    % Fill the equilateral triangle centered at (0,0)
    % Use \fill instead of \filldraw to avoid drawing the edge between 1 and 2
    %\fill[fill=yellow!40] (90: \side) -- (210: \side) -- (330: \side) -- cycle;
    
    % Draw the edges between the other two pairs of vertices
    %\draw (-\side, -\side) -- (\side, -\side); % Edge between vertices 2 and 3
   % \draw (\side, -\side) -- (\side, \side);  % Edge between vertices 3 and 1
   % \draw (\side, \side) -- (-\side, \side);
   % \draw (-\side, \side) -- (-\side, -\side);
   
    \filldraw[fill=white] (-\side, -\side) -- (\side, -\side) -- (\side, \side) -- (-\side, \side) -- (-\side, -\side) -- cycle;
    %\draw (-\side, -\side) -- (\side, \side);

    % Connect the center with all three vertices
    %\draw (0,0) -- (90: \side);
    
    % Fill the vertices with yellow circles (2pt)
    \fill (-\side, \side) circle (2pt);  % Vertex 1
    \fill (-\side, -\side) circle (2pt); % Vertex 2
    \fill (\side, \side) circle (2pt); % Vertex 3
    \fill (\side, -\side) circle (2pt);  % Vertex 4

    % Label the vertices and the center
    \node at (-\side, \side) [above left] {1};
    \node at (-\side, -\side) [below left] {2};
    \node at (\side, \side) [above right] {3};
    \node at (\side, -\side) [below right] {4};
\end{tikzpicture}
\caption{$\KK$.}
\end{subfigure}%
\quad
 \begin{subfigure}[b]{0.2\textwidth}
\begin{tikzpicture}
    % Define the length of the side of the triangle
    \def\side{1.}

    % Fill the equilateral triangle centered at (0,0)
    % Use \fill instead of \filldraw to avoid drawing the edge between 1 and 2
    %\fill[fill=yellow!40] (90: \side) -- (210: \side) -- (330: \side) -- cycle;
    
    % Draw the edges between the other two pairs of vertices
    %\draw (-\side, -\side) -- (\side, -\side); % Edge between vertices 2 and 3
   % \draw (\side, -\side) -- (\side, \side);  % Edge between vertices 3 and 1
   % \draw (\side, \side) -- (-\side, \side);
   % \draw (-\side, \side) -- (-\side, -\side);
   
    \filldraw[fill=yellow!40] (-\side, -\side) -- (\side, -\side) -- (\side, \side) -- (-\side, \side) -- (-\side, -\side) -- cycle;
    \draw (-\side, -\side) -- (\side, \side);

    % Connect the center with all three vertices
    %\draw (0,0) -- (90: \side);
    
    % Fill the vertices with yellow circles (2pt)
    \fill (-\side, \side) circle (2pt);  % Vertex 1
    \fill (-\side, -\side) circle (2pt); % Vertex 2
    \fill (\side, \side) circle (2pt); % Vertex 3
    \fill (\side, -\side) circle (2pt);  % Vertex 4

    % Label the vertices and the center
    \node at (-\side, \side) [above left] {1};
    \node at (-\side, -\side) [below left] {2};
    \node at (\side, \side) [above right] {3};
    \node at (\side, -\side) [below right] {4};
\end{tikzpicture}
\caption{$\LL$.}
\end{subfigure}%
\caption{The matrix $\Delta_{1, \text{up}}^{\KK, \LL}$ associated to these two complexes is dense.}
\label{fig:dense_example}
\end{figure}

\end{example}

However,
if we do not need to compute the up persistent Laplacians explicitly (e.g., if we only want the eigenvalues; see \cref{sec:eigenvalues}), the arithmetic operations can be reduced to be quadratic. We summarize the result in the following corollary:

\begin{corollary}\label{cor:filt}
The sequences of matrix pairings $\left\{\big(B_{q+1}^{\LL, \KK_i}, W_{q+1}^{\LL, \KK_i}\big)\right\}_{i=0}^m$ and the matrix products 
\[\left\{(W_q^{\KK_i})^{-1/2}B_{q+1}^{\LL, \KK_i} (W_{q+1}^{\LL, \KK_i})^{1/2}\right\}_{i=0}^m\] 
can be computed in 
\[
O\left((n_q^\LL-n_q^{\KK_m}+m)\times \alpha(n_{q+1}^\LL) + (m+1)\times (n_{q+1}^\LL + n_q^{\KK_m})\right)
\]
arithmetic operations.
\end{corollary}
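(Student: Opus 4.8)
The plan is to interleave the filtration analysis from the proof of \cref{prop:time_filtration} with the per-step cost accounting from \cref{prop:alg_lap}, but stopping short of the matrix product that forces the quadratic term. First I would fix the notation $D_{q+1}^{\LL, \KK_i} := B_{q+1}^\LL(I_q^{\LL, \KK_i}, :)$ and $R^i = D_{q+1}^{\LL, \KK_i} E^i V^i$ for the weak column reductions, exactly as in \cref{prop:time_filtration}. The observation driving the argument is that passing from $\KK_i$ to $\KK_{i-1}$ deletes one $q$-simplex of $\KK_i$, which amounts to \emph{adding} one row to $D_{q+1}^{\LL, \KK_i}$ to obtain $D_{q+1}^{\LL, \KK_{i-1}}$; hence the family $\{D_{q+1}^{\LL, \KK_i}\}_{i=m}^0$ (indexed in decreasing order) is exactly a sequence of non-branching matrices each with one extra row, so \cref{lemma:D_filtration} applies with $k = n_q^\LL - n_q^{\KK_m}$, $l = n_{q+1}^\LL$, and $m$ steps. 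This gives the weak column reductions $\{(R^i, E^i, V^i)\}$ — or, invoking \cref{rmk:filt_weak_red}, just the zero-column index sets — in $O\big((n_q^\LL - n_q^{\KK_m} + m)\times \alpha(n_{q+1}^\LL) + m\times(n_q^\LL - n_q^{\KK_m} + m + n_{q+1}^\LL)\big)$ operations, which already fits inside the claimed bound.

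Next I would account for the matrices $B_{q+1}^{\LL, \KK_i}$ and $W_{q+1}^{\LL, \KK_i}$. For the base index $m$, \cref{prop:pers_lap}(3) and the cost analysis inside \cref{prop:alg_lap} give $B_{q+1}^{\LL, \KK_m}$ in $O(n_q^{\KK_m})$ operations (it has at most $2 n_q^{\KK_m}$ nonzero entries since $B_{q+1}^\LL$ is non-branching), and $W_{q+1}^{\LL, \KK_m} = W_{q+1}^\LL(C_0, C_0)\oplus S^{-1}$ from \cref{thm:fastcomputation} in $O(n_{q+1}^\LL)$ operations. For the inductive step, as in the proof of \cref{prop:time_filtration}: the columns of $B_{q+1}^{\LL, \KK_i}$ correspond bijectively to the regulable components of $D_{q+1}^{\LL, \KK_i}$, and the single extra row distinguishing $D_{q+1}^{\LL, \KK_{i-1}}$ either merges two regulable components, attaches a loop, or leaves the regulable part untouched; in all cases $B_{q+1}^{\LL, \KK_{i-1}}$ is obtained from $B_{q+1}^{\LL, \KK_i}$ by modifying at most two columns and deleting one row, at cost $O(n_q^{\KK_m})$, and $W_{q+1}^{\LL, \KK_{i-1}}$ is updated from $W_{q+1}^{\LL, \KK_i}$ in $O(1)$ (plus the $O(\alpha(n_{q+1}^\LL))$ to recompute the relevant $s_j$ via a \textsc{Find}). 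Summing over $i = m, m-1, \ldots, 0$ contributes $O\big((m+1)\times (n_q^{\KK_m} + n_{q+1}^\LL)\big)$ plus the DSU bookkeeping already absorbed above. Finally, forming each product $(W_q^{\KK_i})^{-1/2} B_{q+1}^{\LL, \KK_i} (W_{q+1}^{\LL, \KK_i})^{1/2}$ is cheap: $(W_q^{\KK_i})^{-1/2}$ and $(W_{q+1}^{\LL, \KK_i})^{1/2}$ are diagonal, so left- and right-multiplication just rescales the at most $2 n_q^{\KK_m}$ nonzero entries of $B_{q+1}^{\LL, \KK_i}$, costing $O(n_q^{\KK_m})$ per index and $O((m+1)\, n_q^{\KK_m})$ in total. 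Collecting all contributions yields the stated bound.

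The main obstacle I anticipate is not any single estimate but the bookkeeping around the incremental updates: one must argue carefully that updating $B_{q+1}^{\LL, \KK_{i-1}}$ from $B_{q+1}^{\LL, \KK_i}$ really only touches $O(1)$ columns. This requires checking that when the new row causes two regulable components $C_{i_1}, C_{i_2}$ to merge, the new aggregated column $b$ for the merged component can be computed from the old $b_{i_1}, b_{i_2}$ (both already available) by a single vector addition after a sign correction read off from $E$, rather than re-summing $B_{q+1}^\LL$ over the whole merged index set — and similarly that a newly row-singular or newly non-orientable component simply \emph{drops} a column (its contribution disappears from the kernel) at $O(1)$ structural cost. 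A secondary subtlety is that a single added row can change the orientability status of a component, so one must confirm the $O(n_q^{\KK_m})$ per-step bound still holds when \textsc{Reorientation} must be rerun on the affected component; but since the affected component's size is bounded by $n_q^{\KK_m}$ and this happens at most once per filtration step, the total remains $O((m+1)\, n_q^{\KK_m})$. Everything else is a routine summation of the per-step costs, and the key point — that we never form the dense product $B_{q+1}^{\LL, \KK_i} W_{q+1}^{\LL, \KK_i} (B_{q+1}^{\LL, \KK_i})^T$ (cf.\ \cref{ex:dense}) — is exactly what removes the cubic term present in \cref{prop:time_filtration}.
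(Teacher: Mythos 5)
Your proposal is correct and takes essentially the same route as the paper: \cref{cor:filt} is obtained there directly from the proof of \cref{prop:time_filtration} by dropping the per-step quadratic matrix-product term and keeping the incremental DSU/column-update accounting, with the diagonal rescaling by $(W_q^{\KK_i})^{-1/2}$ and $(W_{q+1}^{\LL, \KK_i})^{1/2}$ costing only a rescaling of the $O(n_q^{\KK_m})$ nonzero entries per index, exactly as you argue. The only caveat is presentational: you should rely on \cref{rmk:filt_weak_red} and the per-step updates of $E^i$, $V^i$, $B_{q+1}^{\LL,\KK_i}$ (as in the proof of \cref{prop:time_filtration}) rather than the full bound of \cref{lemma:D_filtration}, whose $m\times(k+m+l)$ term for explicitly maintaining the matrices $R^i$ is neither needed here nor obviously within the stated bound.
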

%\todo{i suppose $m$ should be shifted here too}

\iffalse
{\color{blue}One advantage of \cref{prop:time_filtration} than that of \cite{pers_lap} in the case of non-branching filtration is that it is allowed to have a gap between $\KK_m$ and $\LL$,
therefore if the $q$-simplices between $\KK_m$ and $\LL$ are not interested,
we do not have to refine the simplicial complexes between $\KK_m$ and $\LL$ to achieve a simplexwise filtration.

Another advantage is that in the case of computing non-zero eigenvalues,
we can compute that of the dual Laplacian,
i.e.,
$
\widetilde{\Delta}_{q, \mathrm{up}}^{\KK, \LL}
    :=
    \left(B_{q+1}^{\LL, \KK}\right)^{T}(W_{q}^\KK)^{-1}B_{q+1}^{\LL, \KK} S^{-1}.
$
Since in practice the matrix $B_{q+1}^{\LL, \KK}$ contains much fewer columns than rows,
the size of $\widetilde{\Delta}_{q, \mathrm{up}}^{\KK_i, \LL}$ is usually much smaller than that of $\Delta_{q, \mathrm{up}}^{\KK_i, \LL}$.

[extend their explicit running time] say more about why it seems unlikely to achieve sub-cubic running times. compare eigenvalues of up + down with pers lap, use $B^T$ and svd to compute eigenvalues in quadratic). in practice: we want all eigenvalues, we can do this in quartic time, and this is probably the best since computing the eigenvalues of down for a fixed L anyways takes cubic, and you need to do this for a linear number of L)}
\fi

\section{A generalized Kron reduction and a Cheeger inequality}\label{subsec:kron_red}

In graph theory, the eigenvalues of the graph Laplacian (specifically, the up Laplacian) encode crucial information about connectivity, clustering, and expansion. For instance, the smallest non-zero eigenvalue reflects how well-connected the graph is, as captured by the Cheeger inequalities. Recently, a Cheeger-type inequality for the Hodge Laplacian on simplicial complexes was established in~\cite{jost2023cheeger}, and a Cheeger inequality for the persistent graph Laplacian was given in~\cite{pers_lap}. However, as noted in~\cite{wei2023}, very little is known about the topological interpretation of the eigenvalues and eigenvectors of persistent Laplacians. In this section, we show that the setting of non-branching complexes provides a clearer understanding of the eigenvalues of the up persistent Laplacian.

Concretely, in \cref{sec:kron}, we show that $\Delta_{q, \mathrm{up}}^{\KK, \LL}$ coincides with the Laplacian of a weighted oriented hypergraph, under the assumption that $\LL$ is unweighted and $q$-non-branching. The relevant theory for hypergraphs is provided in \cref{appdx_hg}. This type of reduction—where the persistent (graph) Laplacian is replaced by the Laplacian of a single graph—is known as a \emph{Kron reduction}. However, as shown in~\cite[Section SM5]{pers_lap}, one cannot, in general, replace the up persistent Laplacian of a pair of simplicial complexes with the Laplacian of a single simplicial complex. Exploring generalizations of Kron reduction to the simplicial setting is therefore left by the authors as an open direction for future work. Our result thus provides a partial answer to this open problem.

Leveraging the connection to hypergraphs, we establish a Cheeger-type inequality for the smallest non-zero eigenvalue in \cref{sec:cheeger}. To the best of our knowledge, this is the first result of its kind for the up persistent Laplacian of simplicial complexes.

\begin{remark}
While we do not make the statements explicit, all of the following results apply verbatim to the case of cubical complexes.
\end{remark}

\subsection{The Kron reduction}
\label{sec:kron}
Let $\LL$ be an unweighted $q$-non-branching simplicial complex. 
Recall that $C_j$ is a column-index subset of $B_{q+1}^\LL$ associated with the vertices of a connected component in a hypergraph, 
and $l_j = |C_j|$ is the cardinality of $C_j$. 
Note that $B_{q+1}^\LL(:, C_j)$ corresponds to $l_j$ distinct $(q+1)$-simplices,
%Since $B_{q+1}^\LL(:, C_j)$ is an oriented representation, 
and \cref{prop:pers_lap} implies that the column vector $B_{q+1}^{\LL, \KK}(:, j)$ is equal to the column sum of $B_{q+1}^\LL(:, C_j)$. Therefore, the column $B_{q+1}^{\LL, \KK}(:, j)$ can be interpreted as the boundary of a $(q+1)$-dimensional polyhedron formed by the union of the $l_j$ $(q+1)$-simplices.

In fact, we can interpret $B_{q+1}^{\LL, \KK}$ as the incidence matrix of an oriented  hypergraph $\widetilde{\KK}=(V, E)$, where each vertex corresponds to a $q$-simplex in $\KK$, and where the hyperedges are given by the boundaries of connected polyhedra in $\LL$.  Furthermore, since $\LL$ is unweighted, we recall from \cref{thm:fastcomputation} that
the up persistent Laplacian can be written as
\[
\Delta_{q, \mathrm{up}}^{\KK, \LL}
=
B_{q+1}^{\LL, \KK} \big(\id_h\oplus S^{-1}\big)\left(B_{q+1}^{\LL, \KK}\right)^{T},
\]
where $S^{-1}=\diag(l_1^{-1}, \cdots, l_r^{-1})$.
Therefore, if we regard $(\id_h\oplus S^{-1})$ as the hyperedge weights of the oriented hypergraph $\widetilde{\KK}$, the up persistent Laplacian $\Delta_{q, \mathrm{up}}^{\KK, \LL}$ coincides with the hypergraph Laplacian (\cref{def:laphyper}) of $\widetilde{\KK}$.
Summarized, we have the following.
\begin{proposition}\label{prop:cheeger}
%Let $\LL$ be an unweighted $q$-non-branching simplicial complexes,
%and $\KK\hookrightarrow \LL$ be a sub-simplicial complex.
% and $S_{q+1}^\LL\subset \LL$ is orientation-compatible.
%Assume $\LL$ is unweighted.
%Let $B_{q+1}^\LL$ be a representation of the boundary operator $\partial_{q+1}^\LL$,  
%and let $B_{q+1}^{\LL, \KK}$ be the matrix representation of $\partial_{q+1}^{\LL, \KK}$.
Let $\LL$ be an unweighted $q$-non-branching simplicial complex. Then, the up persistent Laplacian takes the form
 \[\Delta_{q, \mathrm{up}}^{\KK, \LL}=B_{q+1}^{\LL, \KK}\left(\id_h\oplus S^{-1}\right)(B_{q+1}^{\LL, \KK})^T\]
which coincides with the hypergraph Laplacian of the weighted oriented hypergraph $\widetilde{\KK}$.
\end{proposition}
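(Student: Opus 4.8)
The plan is to assemble this proposition essentially as a bookkeeping exercise, stitching together \cref{thm:fastcomputation}, \cref{prop:pers_lap}, and the definition of the hypergraph Laplacian from \cref{appdx_hg}. First I would recall from \cref{thm:fastcomputation} that in general $\Delta_{q, \mathrm{up}}^{\KK, \LL} = B_{q+1}^{\LL, \KK} W_{q+1}^{\LL, \KK}(B_{q+1}^{\LL, \KK})^T (W_q^\KK)^{-1}$ with $W_{q+1}^{\LL, \KK} = W_{q+1}^\LL(C_0, C_0) \oplus S^{-1}$. Specializing to $\LL$ unweighted gives $W_{q+1}^\LL = \id$, so $W_{q+1}^\LL(C_0, C_0) = \id_h$ where $h = |C_0|$, and $S = \diag(l_1, \dots, l_r)$ since $s_j = \sum_{k \in C_j} 1/W_{q+1}^\LL(k,k) = |C_j| = l_j$. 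If additionally $\KK$ is unweighted then $W_q^\KK = \id$ and the formula collapses to $\Delta_{q, \mathrm{up}}^{\KK, \LL} = B_{q+1}^{\LL, \KK}(\id_h \oplus S^{-1})(B_{q+1}^{\LL, \KK})^T$, which is the displayed identity. (I would note that the statement as phrased only hypothesizes $\LL$ unweighted; if $\KK$ is meant to be unweighted too this should be made explicit, otherwise one keeps the trailing $(W_q^\KK)^{-1}$ factor — I would flag this minor point rather than silently drop it.)

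Next I would make precise the identification of $B_{q+1}^{\LL, \KK}$ with the incidence matrix of an oriented hypergraph $\widetilde{\KK} = (V, E)$. By \cref{prop:pers_lap}(3), the columns of $B_{q+1}^{\LL, \KK}$ are the columns $B_{q+1}^\LL(I_q^\KK, C_0)$ together with the aggregated columns $b_j = B_{q+1}^\LL(I_q^\KK, C_j) V(C_j, c_j^{l_j})$ for $1 \le j \le r$. Each such column is a $\{-1,0,1\}$-vector indexed by the $q$-simplices of $\KK$ (since $\LL$ is $q$-non-branching and $V$ has entries in $\{-1,0,1\}$, and after the sign adjustment $E$ each column has at most one $+1$ and one $-1$ per row in the regulable case — actually I should check that aggregating over a connected component keeps entries in $\{-1,0,1\}$: this follows because the component's incidence structure, once oriented, has each row of $D$ meeting it in opposite-signed pairs that cancel, leaving the $B_{q+1}^\LL(I_q^\KK, \cdot)$ part). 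So each column is the signed incidence vector of a hyperedge: vertices are $q$-simplices of $\KK$, hyperedges are the $C_0$-columns (single $(q+1)$-simplices whose boundary lies in $\KK$) and the $r$ polyhedral boundaries. Then I invoke \cref{def:laphyper}: the Laplacian of a weighted oriented hypergraph with incidence matrix $B$ and diagonal edge-weight matrix $W_E$ is $B W_E B^T$. Taking $B = B_{q+1}^{\LL, \KK}$ and $W_E = \id_h \oplus S^{-1}$ (weight $1$ on each single-simplex hyperedge, weight $1/l_j$ on the $j$-th polyhedral hyperedge) yields exactly $\Delta_{q, \mathrm{up}}^{\KK, \LL}$.

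Concretely the steps in order are: (1) substitute $W_{q+1}^\LL = \id$ into \cref{thm:fastcomputation} and simplify $S$; (2) verify the aggregated columns $b_j$ have entries in $\{-1,0,1\}$ so that $B_{q+1}^{\LL, \KK}$ is a legitimate oriented-hypergraph incidence matrix; (3) read off the vertex set (the $q$-simplices of $\KK$) and edge set ($C_0$ plus the $r$ components) and the weight assignment; (4) cite \cref{def:laphyper} to conclude the product $B_{q+1}^{\LL, \KK}(\id_h \oplus S^{-1})(B_{q+1}^{\LL, \KK})^T$ is by definition the hypergraph Laplacian of $\widetilde{\KK}$. I expect step (2) — confirming that summing the columns within a regulable component and applying the flag signs leaves the result in $\{-1,0,1\}$ rather than producing a $\pm 2$ — to be the only genuinely non-trivial point, and it is precisely the content already secured by \cref{thm:main_thm} and \cref{lem:simpleprod} (the interior rows cancel), so the proof is a short appeal to those. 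Everything else is substitution and unpacking definitions.
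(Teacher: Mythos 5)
Your overall route is the same as the paper's: the proposition is obtained by specializing \cref{thm:fastcomputation} to unit weights (so $W_{q+1}^{\LL,\KK}=\id_h\oplus S^{-1}$ with $S=\diag(l_1,\dots,l_r)$), reading $B_{q+1}^{\LL,\KK}$ as the incidence matrix of the oriented hypergraph $\widetilde{\KK}$, and citing \cref{def:laphyper}; the paper presents exactly this discussion and then states the proposition as a summary. Your flag about the missing $(W_q^\KK)^{-1}$ is harmless: the weights on $\KK$ are those inherited from $\LL$, so ``$\LL$ unweighted'' forces $W_q^\KK=\id$ (alternatively one can absorb it into the vertex weights $W_0$ of \cref{def:laphyper}).

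The one place where you go beyond the paper is your step (2), and there your justification does not hold. \cref{lem:simpleprod} and \cref{thm:main_thm} only control the rows indexed by $I_q^{\LL,\KK}$: the flag matrix $E$ is chosen so that $(D_{q+1}^{\LL,\KK}E)(:,C_j)$ is oriented, i.e.\ the rows \emph{outside} $\KK$ carry one $+1$ and one $-1$ per component and cancel; nothing whatsoever is imposed on the rows indexed by $I_q^\KK$. A $q$-simplex of $\KK$ that is a face of two $(q+1)$-simplices lying in the \emph{same} regulable component can therefore receive coefficient $\pm 2$ in the aggregated column $b_j$. Concretely, take $\LL$ to be the M\"obius strip of \cref{ex_2} and enlarge $\KK$ to the boundary circle together with the interior edge $[13]$: then $D_2^{\LL,\KK}$ has rows $[14],[24],[25],[35]$, forming a single regulable component with flags $(1,-1,1,-1,1)$ on the five triangles, and the $[13]$-entry of $b_1$ equals $2$. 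So the bound ``entries of $B_{q+1}^{\LL,\KK}$ lie in $\{-1,0,1\}$'' is \emph{not} secured by the cited lemmas; it is an additional condition on the pair $\KK\hookrightarrow\LL$ (it holds when the polyhedra glue orientably across the $\KK$-faces they self-intersect in, and in particular in the cubical/image setting of \cref{subsec:cubical}). Note that the displayed matrix identity itself is untouched by this --- it is literally \cref{thm:fastcomputation} with unit weights --- and the paper's own text also asserts the incidence-matrix interpretation without addressing the point; but your claim that it ``is precisely the content already secured by \cref{thm:main_thm} and \cref{lem:simpleprod}'' is where your argument, as written, has a genuine gap.
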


On the other hand,
we can also consider the dual hypergraph (\cref{def_dual_hg}) $\widetilde{\KK}_{\textrm{dual}}$,
i.e.,
we regard each polyhedron in $\LL$ as a vertex, and each $q$-simplex as a hyperedge. That is, we take the transpose of the incidence matrix of $\widetilde{\KK}$, to obtain a hypergraph where the vertex weights are given by $\id_h\oplus S^{-1}$ and the hyperedges are unweighted (weights $\equiv$ 1). The following is immediate.

\begin{proposition}
Let $\Delta_{\widetilde{\KK}_{\textrm{\textrm{dual}}}}$ be the Laplacian associated with the dual the hypergraph $\widetilde{\KK}_{\textrm{\textrm{dual}}}$.
Then, $
\Delta_{\widetilde{\KK}_{\textrm{\textrm{dual}}}}
=
(\id_h\oplus S^{-1})(B_{q+1}^{\LL, \KK})^TB_{q+1}^{\LL, \KK}.$
\end{proposition}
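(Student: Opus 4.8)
The plan is straightforward: the statement asks to identify the Laplacian of the dual hypergraph $\widetilde{\KK}_{\textrm{dual}}$ with the matrix product $(\id_h\oplus S^{-1})(B_{q+1}^{\LL, \KK})^T B_{q+1}^{\LL, \KK}$. First I would recall the definition of the hypergraph Laplacian (\cref{def:laphyper} in \cref{appdx_hg}) applied to an oriented hypergraph: for an oriented hypergraph $H$ with incidence matrix $B$, vertex-weight matrix $W_V$, and hyperedge-weight matrix $W_E$, the Laplacian is $\Delta_H = W_V^{?} B W_E B^T$ (the precise placement of weights must be read off from the definition). For $\widetilde{\KK}$ itself we already have from \cref{prop:cheeger} that $\Delta_{q,\mathrm{up}}^{\KK,\LL} = B_{q+1}^{\LL,\KK}(\id_h\oplus S^{-1})(B_{q+1}^{\LL,\KK})^T$, which means $\widetilde{\KK}$ has incidence matrix $B_{q+1}^{\LL,\KK}$, unit vertex weights, and hyperedge weights $\id_h\oplus S^{-1}$.

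Next I would invoke the definition of the dual hypergraph (\cref{def_dual_hg}): passing to the dual swaps the roles of vertices and hyperedges, so the incidence matrix of $\widetilde{\KK}_{\textrm{dual}}$ is $(B_{q+1}^{\LL,\KK})^T$, the vertex weights of $\widetilde{\KK}_{\textrm{dual}}$ are the (former) hyperedge weights $\id_h\oplus S^{-1}$, and the hyperedge weights of $\widetilde{\KK}_{\textrm{dual}}$ are the (former) vertex weights, i.e.\ all equal to $1$. Plugging these data into the Laplacian formula from \cref{def:laphyper}, with incidence matrix $(B_{q+1}^{\LL,\KK})^T$, vertex weights $\id_h\oplus S^{-1}$, and trivial hyperedge weights, yields exactly $(\id_h\oplus S^{-1})(B_{q+1}^{\LL,\KK})^T B_{q+1}^{\LL,\KK}$, which is the claimed identity. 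The argument is essentially a bookkeeping exercise: read off the data of $\widetilde{\KK}$, dualize, and substitute.

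The only real subtlety — and the step I would be most careful about — is making sure the placement of the weight matrices in the hypergraph Laplacian definition matches up correctly under transposition, since the formula in \cref{prop:cheeger} has the edge weights sandwiched as $B\,W_E\,B^T$ with vertex weights on the left (here trivial). When one dualizes, the ``$W_E B^T$ on the right'' structure becomes ``$W_V$ on the left'' with the new incidence matrix $(B_{q+1}^{\LL,\KK})^T$, and one must confirm the definition in the appendix is set up so that vertex weights indeed appear on the left (possibly after a normalization convention). Assuming the conventions in \cref{appdx_hg} are consistent — which they are designed to be — the identity follows immediately, so I would state the proof in one or two lines: ``By \cref{def_dual_hg}, $\widetilde{\KK}_{\textrm{dual}}$ has incidence matrix $(B_{q+1}^{\LL,\KK})^T$, vertex weights $\id_h\oplus S^{-1}$, and unit hyperedge weights; substituting into \cref{def:laphyper} gives the claim.''
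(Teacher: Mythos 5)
Your proposal is correct and matches the paper's approach: the paper treats the proposition as immediate from the construction of $\widetilde{\KK}_{\textrm{dual}}$ (incidence matrix $(B_{q+1}^{\LL,\KK})^T$, vertex weights $\id_h\oplus S^{-1}$, unit hyperedge weights) together with the Laplacian formula $L_0=W_0\,\mathcal{I}\,W_1\,\mathcal{I}^T$ from the appendix, which is exactly the substitution you carry out. Your worry about weight placement is resolved by that definition, since vertex weights do appear on the left, so your one-line conclusion is exactly right.
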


The following simple observation will be important in the next section.

\begin{lemma}\label{lemma:same_eig_val}
$\Delta_{\widetilde{\KK}_{\textrm{\textrm{dual}}}}$ and $\Delta_{q, \mathrm{up}}^{\KK, \LL}$ have the same non-zero eigenvalues.
\end{lemma}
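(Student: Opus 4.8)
The plan is to exploit the standard linear-algebra fact that for any matrix $A$, the two products $AA^T$ and $A^TA$ have the same non-zero eigenvalues with the same multiplicities, and then to account for the diagonal weight matrices that appear on either side. Write $P := \id_h\oplus S^{-1}$, so that $\Delta_{q,\mathrm{up}}^{\KK,\LL} = B P B^T$ and $\Delta_{\widetilde{\KK}_{\mathrm{dual}}} = P B^T B$, where I abbreviate $B := B_{q+1}^{\LL,\KK}$. Since $P$ is diagonal with strictly positive entries, it has a well-defined symmetric positive-definite square root $P^{1/2}$. First I would form the symmetrized matrix $M := P^{1/2} B^T B P^{1/2}$, which is similar to $P B^T B = \Delta_{\widetilde{\KK}_{\mathrm{dual}}}$ via conjugation by $P^{1/2}$ (indeed $P^{1/2} M P^{-1/2} = P B^T B$), and hence $M$ and $\Delta_{\widetilde{\KK}_{\mathrm{dual}}}$ have identical eigenvalues, non-zero ones included.

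Next I would observe that $M = (B P^{1/2})^T (B P^{1/2})$, so by the $AA^T$ versus $A^TA$ principle applied to $A := B P^{1/2}$, the matrix $M$ has the same non-zero eigenvalues as $A A^T = B P B^T = \Delta_{q,\mathrm{up}}^{\KK,\LL}$. Concretely, if $M v = \lambda v$ with $\lambda \neq 0$, then $A A^T (A v) = A (M v) = \lambda (A v)$ and $A v \neq 0$ because $v^T M v = \|A v\|^2 = \lambda \|v\|^2 \neq 0$; the reverse direction is symmetric. Chaining the two comparisons gives that $\Delta_{\widetilde{\KK}_{\mathrm{dual}}}$ and $\Delta_{q,\mathrm{up}}^{\KK,\LL}$ share the same non-zero eigenvalues, as claimed.

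I do not anticipate a serious obstacle here; the only point requiring a little care is that the two matrices have different sizes ($\Delta_{q,\mathrm{up}}^{\KK,\LL}$ acts on $C_q^\KK \cong \mathbb{R}^{n_q^\KK}$ while $\Delta_{\widetilde{\KK}_{\mathrm{dual}}}$ acts on $\mathbb{R}^{h+r}$, the number of polyhedra/hyperedges), so the statement is necessarily only about the \emph{non-zero} part of the spectrum — the zero eigenvalues come with different multiplicities governed by $n_q^\KK - \operatorname{rank}(B)$ versus $(h+r) - \operatorname{rank}(B)$. I would also note in passing that the argument never uses the non-branching hypothesis beyond what is already packaged into \cref{prop:cheeger}; it is a purely formal consequence of the factorization $\Delta_{q,\mathrm{up}}^{\KK,\LL} = B P B^T$ together with positivity of the diagonal weights $P$. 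If one wants the statement with multiplicities, the same similarity-plus-transpose argument delivers that for free.
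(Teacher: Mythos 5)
Your proof is correct, and at its core it runs on the same mechanism as the paper's: transferring eigenvectors between the two products $B P B^{T}$ and $P B^{T} B$ (with $B = B_{q+1}^{\LL,\KK}$, $P=\id_h\oplus S^{-1}$) by multiplying the eigenvalue equation by the appropriate factor. The paper does this directly and asymmetrically — if $PB^TBv=\lambda v$ it multiplies by $B$, and if $BPB^Tv'=\lambda' v'$ it multiplies by $PB^T$ — without ever symmetrizing. Your detour through $M=P^{1/2}B^TBP^{1/2}$ and the standard $A^TA$ versus $AA^T$ fact for $A=BP^{1/2}$ is a mild repackaging rather than a genuinely different argument, but it does buy a little extra: it makes explicit that the transferred eigenvector is non-zero (via $\|Av\|^2=\lambda\|v\|^2$), a point the paper leaves implicit (there one should note that $Bv=0$ would force $\lambda v=0$, contradicting $\lambda\neq 0$), and it yields for free that the non-zero spectra agree with multiplicities and consist of positive reals, since $M$ is symmetric positive semidefinite. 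Your remarks on the size discrepancy of the two matrices and on the fact that only the factorization $\Delta_{q,\mathrm{up}}^{\KK,\LL}=BPB^T$ with $P$ positive diagonal is used (consistent with the unweighted setting of \cref{prop:cheeger}) are accurate.
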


\begin{proof}
If $\lambda$ is a non-zero eigenvalue of $\Delta_{\widetilde{\KK}_{\textrm{\textrm{dual}}}}$,
then there is an eigenvector $v$ such that
$(\id_h\oplus S^{-1})(B_{q+1}^{\LL, \KK})^TB_{q+1}^{\LL, \KK}v=\lambda v$.
Multiplying by $B_{q+1}^{\LL, \KK}$ on both sides,
we obtain that $\lambda$ is also an eigenvalue of $\Delta_{q,\textrm{up}}^{\KK, \LL}$ with the corresponding eigenvector being 
$B_{q+1}^{\LL, \KK}v$.
Similarly,
if $\lambda'$ is an eigenvalue of $\Delta_{q,\textrm{up}}^{\KK, \LL}$,
there is some eigenvector $v'$ such that
$B_{q+1}^{\LL, \KK}\left(\id_h\oplus S^{-1}\right)(B_{q+1}^{\LL, \KK})^T v'=\lambda' v'$.
Multiplying by $(\id_h\oplus S^{-1})$
$\left(B_{q+1}^{\LL, \KK}\right)^T$ on both sides,
we prove the lemma.
\end{proof}

\subsection{A Cheeger-type inequality}\label{sec:cheeger}
We denote by $P_j$ the $(q+1)$-dimensional (connected) polyhedron in $\LL$ corresponding to the column vector $B_{q+1}^{\LL, \KK}(:, j)$ for $1\leq j\leq h+r$.
Let $\mathbf{V}(P_j)$ be the number of $(q+1)$-simplices in the polyhedron $P_j$.
Moreover,
we let $\mathbf{A}(P_j)$ be the number of $q$-simplices forming the boundary of $P_j$, i.e., the number of non-zero entries of the column vector $B_{q+1}^{\LL, \KK}(:, j)$,
and let $\mathbf{A}(P_i\bigcap P_j)$ be the number of $q$-simplices shared by the boundaries of polyhedra $P_{i}$ and $P_{j}$,
i.e.,
$\mathbf{A}(P_i\bigcap P_j)$ is equal to the absolute value of the (standard) dot product of the column vectors $B_{q+1}^{\LL, \KK}(:, i)$ and $B_{q+1}^{\LL, \KK}(:, j)$.
Finally,
we let $\widehat{\mathbf{A}}(P_j)$ be the number of $q$-simplices in the boundary of $P_j$ which are not part of the boundary of any other polyhedron,
i.e.,
$\widehat{\mathbf{A}}(P_j)
=
\mathbf{A}(P_j)
-
\sum\limits_{i\neq j}\mathbf{A}(P_i\bigcap P_j).
$
We first recall the Ger\u{s}gorin circle theorem below,
the readers can consult \cite{MR2978290, MR2093409} for more details.
\begin{theorem}[Geršgorin]\label{thm:gersgorin}
Let $A=(a_{ij})\in M_n(\mathbb{C})$. Let $R_i=\sum\limits_{j\neq i}|a_{ij}|$. Let $D(a_{ii}, R_i)\subset \mathbb{C}$ be the closed disc centered at $a_{ii}$ with radius $R_i$, which is referred to as a Geršgorin disc. Then every eigenvalue of $A$ lies within the union of all the Geršgorin discs.
\end{theorem}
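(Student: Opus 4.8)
The plan is to give the standard maximal-coordinate argument, which needs nothing beyond the definition of an eigenvector. First I would fix an arbitrary eigenvalue $\lambda$ of $A$ together with an eigenvector $x = (x_1,\dots,x_n)^T \neq 0$, and choose an index $i$ realizing $|x_i| = \max_{1\le j\le n}|x_j|$; note that $|x_i|>0$ because $x\neq 0$.

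Next I would isolate the $i$-th scalar equation of $Ax=\lambda x$, namely $\sum_{j} a_{ij}x_j = \lambda x_i$, and rewrite it as $(\lambda - a_{ii})x_i = \sum_{j\neq i} a_{ij}x_j$. Taking absolute values, applying the triangle inequality, and then using $|x_j|\le |x_i|$ for every $j$, one gets
\[
|\lambda - a_{ii}|\,|x_i| \;\le\; \sum_{j\neq i}|a_{ij}|\,|x_j| \;\le\; |x_i|\sum_{j\neq i}|a_{ij}| \;=\; |x_i|\,R_i .
\]
Dividing through by $|x_i|>0$ yields $|\lambda - a_{ii}|\le R_i$, i.e. $\lambda \in D(a_{ii},R_i)$, and hence $\lambda \in \bigcup_{k=1}^n D(a_{kk},R_k)$. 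Since $\lambda$ was an arbitrary eigenvalue, the union of the Geršgorin discs contains the whole spectrum.

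There is essentially no obstacle: the only ``choice'' in the proof is the selection of the dominant coordinate $i$, and the final division is legitimate precisely because that coordinate is nonzero. I would also record the sharper by-product that $\lambda$ in fact lies in the single disc $D(a_{ii},R_i)$ centered at the diagonal entry indexed by the dominant coordinate, since this localized form is the version actually invoked in the next section when bounding the spectrum of $\Delta_{\widetilde{\KK}_{\mathrm{dual}}}$ (equivalently, by \cref{lemma:same_eig_val}, the nonzero spectrum of $\Delta_{q,\mathrm{up}}^{\KK,\LL}$).
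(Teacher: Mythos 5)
Your proof is correct: it is the standard maximal-coordinate argument for the Geršgorin circle theorem, and every step (choice of the dominant index, triangle inequality, division by $|x_i|>0$) is valid. The paper itself does not prove this classical result but only recalls it with references, so your argument supplies exactly the standard proof found there; your added remark that $\lambda$ lies in the specific disc indexed by the dominant coordinate is a correct and harmless strengthening.
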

Denote by $\lambda_{\mathrm{min}}^{\KK, \LL}$ the minimal non-zero eigenvalue of $\Delta_{q, \mathrm{up}}^{\KK, \LL}$.

\begin{proposition}\label{prop:realcheeger1}
Let $\KK\hookrightarrow \LL$ where $\KK$ and $\LL$ are unweighted and $\LL$ is $q$-non-branching. Then,
%Suppose the boundary matrix $B_{q+1}^\LL$ has full column rank and $B_{q+1}^{\LL, \KK}$ is not $0$.
\[
\min\limits_{j\leq h+r}\frac{\widehat{\mathbf{A}}(P_j)}{\mathbf{V}(P_j)}\leq \lambda_{\mathrm{min}}^{\KK, \LL}.\]
\end{proposition}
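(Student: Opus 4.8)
The plan is to bound $\lambda_{\min}^{\KK, \LL}$ from below by exhibiting a lower bound on the smallest non-zero eigenvalue of the equivalent dual operator $\Delta_{\widetilde{\KK}_{\textrm{dual}}} = (\id_h \oplus S^{-1})(B_{q+1}^{\LL, \KK})^T B_{q+1}^{\LL, \KK}$, which by \cref{lemma:same_eig_val} has the same non-zero spectrum as $\Delta_{q, \mathrm{up}}^{\KK, \LL}$. The dual operator is indexed by the polyhedra $P_1, \ldots, P_{h+r}$, so it is a $(h+r) \times (h+r)$ matrix, and its entries have a direct combinatorial meaning: the $(j,j)$ entry is $\frac{1}{\mathbf{V}(P_j)}\langle B_{q+1}^{\LL,\KK}(:,j), B_{q+1}^{\LL,\KK}(:,j)\rangle = \frac{\mathbf{A}(P_j)}{\mathbf{V}(P_j)}$ (using $\mathbf{V}(P_j) = |C_j| = l_j$, with the convention $\mathbf{V}(P_j)=1$ for the $h$ zero-column indices), and the off-diagonal $(i,j)$ entry has absolute value $\frac{\mathbf{A}(P_i \cap P_j)}{\mathbf{V}(P_i)}$.

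First I would apply the Geršgorin circle theorem (\cref{thm:gersgorin}) to $\Delta_{\widetilde{\KK}_{\textrm{dual}}}$, but I need a lower bound on eigenvalues rather than the usual localization. Since $\Delta_{\widetilde{\KK}_{\textrm{dual}}}$ is similar to the symmetric positive semidefinite matrix $(\id_h \oplus S^{-1})^{1/2}(B_{q+1}^{\LL,\KK})^T B_{q+1}^{\LL,\KK}(\id_h \oplus S^{-1})^{1/2}$, its eigenvalues are real and nonnegative. Every eigenvalue $\lambda$ lies in some Geršgorin disc, so $|\lambda - \frac{\mathbf{A}(P_j)}{\mathbf{V}(P_j)}| \leq \sum_{i \neq j}\frac{\mathbf{A}(P_i \cap P_j)}{\mathbf{V}(P_j)}$ for some $j$, hence $\lambda \geq \frac{\mathbf{A}(P_j)}{\mathbf{V}(P_j)} - \sum_{i\neq j}\frac{\mathbf{A}(P_i \cap P_j)}{\mathbf{V}(P_j)} = \frac{\widehat{\mathbf{A}}(P_j)}{\mathbf{V}(P_j)}$, using the definition $\widehat{\mathbf{A}}(P_j) = \mathbf{A}(P_j) - \sum_{i\neq j}\mathbf{A}(P_i\cap P_j)$. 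Taking the minimum over $j$ gives $\lambda \geq \min_{j\leq h+r}\frac{\widehat{\mathbf{A}}(P_j)}{\mathbf{V}(P_j)}$ for \emph{every} eigenvalue, and in particular for $\lambda_{\min}^{\KK, \LL}$, which by \cref{lemma:same_eig_val} is an eigenvalue of $\Delta_{\widetilde{\KK}_{\textrm{dual}}}$.

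The main obstacle I anticipate is being careful about the hybrid structure of the weight matrix $\id_h \oplus S^{-1}$: the first $h$ indices correspond to zero columns of the weak column reduction $R$ (coming from $C_0$, the originally-zero columns of $D_{q+1}^{\LL,\KK}$), for which the polyhedron $P_j$ is a single $(q+1)$-simplex and $\mathbf{V}(P_j) = 1$, while the remaining $r$ indices carry weight $l_j^{-1} = \mathbf{V}(P_j)^{-1}$. One must verify the Geršgorin computation treats both blocks uniformly, which it does once $\mathbf{V}(P_j)$ is defined as $|C_j|$ throughout (equal to $1$ on the $C_0$ block). A secondary subtlety is confirming the off-diagonal entries of $\Delta_{\widetilde{\KK}_{\textrm{dual}}}$ are exactly $\pm\frac{\mathbf{A}(P_i\cap P_j)}{\mathbf{V}(P_i)}$: this follows since the $(i,j)$ entry of $(B_{q+1}^{\LL,\KK})^T B_{q+1}^{\LL,\KK}$ is the standard dot product of columns $i$ and $j$, whose absolute value is $\mathbf{A}(P_i\cap P_j)$ by definition, and left-multiplication by the diagonal $(\id_h\oplus S^{-1})$ scales row $i$ by $\mathbf{V}(P_i)^{-1}$. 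The rest is a one-line application of Geršgorin, so no serious difficulty remains beyond this bookkeeping.
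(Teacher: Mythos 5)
Your proof is correct and follows essentially the same route as the paper: identify the entries of $\Delta_{\widetilde{\KK}_{\textrm{dual}}}$ as $\frac{\mathbf{A}(P_j)}{\mathbf{V}(P_j)}$ on the diagonal and $\frac{\mathbf{A}(P_i\cap P_j)}{\mathbf{V}(P_j)}$ in absolute value off the diagonal, apply the Ger\v{s}gorin circle theorem, and transfer the bound to $\lambda_{\mathrm{min}}^{\KK,\LL}$ via \cref{lemma:same_eig_val}. Your additional bookkeeping (reality of the spectrum via similarity to a symmetric matrix, and the uniform treatment of the $C_0$ block with $\mathbf{V}(P_j)=1$) only makes explicit what the paper leaves implicit.
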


\begin{proof}
%We notice that for two different column indices $i$ and $j$ the inner product of  column vectors $B_{q+1}^{\LL, \KK}(:, i)$ and $B_{q+1}^{\LL, \KK}(:, j)$ counts the number of $q$-simplices shared by the boundaries of polyhedra $P_{i}$ and $P_{j}$ up to a sign difference,
%which is equal to the $(i, j)$-entry in $(B_{q+1}^{\LL, \KK})^TB_{q+1}^{\LL, \KK}$.
%Moreover,
%the $(j, j)$-entry in $(B_{q+1}^{\LL, \KK})^TB_{q+1}^{\LL, \KK}$ counts the number of $q$-simplices in the boundary of the $j$-th polyhedron.
We notice that the $(i, j)$-entry of $\Delta_{\widetilde{\KK}_{\textrm{\textrm{dual}}}}$ is equal to $\frac{\mathbf{A}(P_i\bigcap P_j)}{\mathbf{V}(P_i)}$ with $i\neq j$,
and the absolute value of diagonal entries of $\Delta_{\widetilde{\KK}_{\textrm{\textrm{dual}}}}$ are given by $\frac{\mathbf{A}(P_i)}{\mathbf{V}(P_i)}$ for each $i\leq h+r$.
Thus the result follows from \cref{thm:gersgorin}.
\end{proof}

\begin{lemma}\label{lemma:full_col_rnk}
Suppose $B_{q+1}^{\LL, \KK}$ is not equal to $0$.
If $B_{q+1}^\LL$ has full column rank,
then $B_{q+1}^{\LL, \KK}$ has full column rank.
\end{lemma}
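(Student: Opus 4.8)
The plan is to exploit the fact that, by \cref{def:perslap}, the operator $\partial_{q+1}^{\LL,\KK}$ is literally the restriction of $\partial_{q+1}^{\LL}$ to the subspace $C_{q+1}^{\LL,\KK}\subseteq C_{q+1}^{\LL}$ (followed by the inclusion $C_q^\KK\hookrightarrow C_q^\LL$ on the target). Since $B_{q+1}^\LL$ has full column rank, $\partial_{q+1}^{\LL}$ is injective, and the restriction of an injective linear map to a subspace is injective; hence $\partial_{q+1}^{\LL,\KK}$ is injective, which is exactly the statement that $B_{q+1}^{\LL,\KK}$ has full column rank. The hypothesis $B_{q+1}^{\LL,\KK}\neq 0$ is only there to rule out the degenerate case $C_{q+1}^{\LL,\KK}=\{0\}$ (equivalently $I=\emptyset$ in \cref{lemma:pers_lap_memoli}), in which $B_{q+1}^{\LL,\KK}$ has no columns and ``full column rank'' is vacuous; note that under the full-rank assumption on $B_{q+1}^\LL$ this degenerate case is in fact the \emph{only} way $B_{q+1}^{\LL,\KK}$ can vanish, since a nonzero $c\in C_{q+1}^{\LL,\KK}$ with $\partial_{q+1}^{\LL,\KK}(c)=0$ would also satisfy $\partial_{q+1}^{\LL}(c)=0$, contradicting injectivity of $\partial_{q+1}^\LL$.

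To phrase this in the matrix language of \cref{prop:pers_lap}, I would argue as follows. Let $Z$ be the matrix whose columns form the chosen basis of $C_{q+1}^{\LL,\KK}$; then $Z$ has linearly independent columns, and by \cref{lemma:pers_lap_memoli} we have $B_{q+1}^{\LL,\KK}=B_{q+1}^\LL(I_q^\KK,:)\,Z$, while by the defining property of $C_{q+1}^{\LL,\KK}$ the columns of $Z$ lie in the kernel of $D_{q+1}^{\LL,\KK}=B_{q+1}^\LL(I_q^{\LL,\KK},:)$, i.e. $B_{q+1}^\LL(I_q^{\LL,\KK},:)\,Z=0$. Now suppose $B_{q+1}^{\LL,\KK}x=0$ for some vector $x$. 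Then $B_{q+1}^\LL(I_q^\KK,:)(Zx)=0$ and $B_{q+1}^\LL(I_q^{\LL,\KK},:)(Zx)=0$; since $[n_q^\LL]=I_q^\KK\sqcup I_q^{\LL,\KK}$, stacking these two row-blocks yields $B_{q+1}^\LL(Zx)=0$. Full column rank of $B_{q+1}^\LL$ forces $Zx=0$, and linear independence of the columns of $Z$ then forces $x=0$, so $B_{q+1}^{\LL,\KK}$ has full column rank.

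There is essentially no obstacle here: the only point requiring a line of care is the bookkeeping that $B_{q+1}^{\LL,\KK}$ and $D_{q+1}^{\LL,\KK}$ are exactly the two row-blocks of $B_{q+1}^\LL Z$ indexed by $I_q^\KK$ and $I_q^{\LL,\KK}$ — immediate from \cref{lemma:pers_lap_memoli} and the definition of $C_{q+1}^{\LL,\KK}$ — together with using the hypothesis $B_{q+1}^{\LL,\KK}\neq 0$ to ensure $Z$ is a genuine (nonempty) matrix so that the conclusion is meaningful. I would present the one-line conceptual argument as the proof and optionally include the matrix computation as a remark for readers who prefer the explicit form used elsewhere in \cref{sec:up_pers_lap}.
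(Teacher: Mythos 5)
Your proof is correct, and it takes a genuinely different route from the paper's. The paper argues concretely through the weak column reduction of \cref{prop:pers_lap}: it notes that $B_{q+1}^{\LL,\KK}$ arises from $B_{q+1}^\LL$ by left-to-right column operations (which preserve full column rank), followed by selecting the columns indexed by $C_0$ and the reduced columns $b_i$, and finally deleting the rows indexed by $I_q^{\LL,\KK}$, on which those selected columns vanish by construction; since omitting zero rows cannot destroy linear independence, full column rank is inherited. Your argument instead bypasses the non-branching structure entirely: you use only that $Z$ has independent columns spanning $C_{q+1}^{\LL,\KK}$, that $B_{q+1}^{\LL,\KK}=B_{q+1}^\LL(I_q^\KK,:)\,Z$ by \cref{lemma:pers_lap_memoli}, and that $B_{q+1}^\LL(I_q^{\LL,\KK},:)\,Z=0$ by the definition of $C_{q+1}^{\LL,\KK}$, so that $B_{q+1}^{\LL,\KK}x=0$ forces $B_{q+1}^\LL(Zx)=0$, hence $Zx=0$, hence $x=0$ --- equivalently, the restriction of an injective map to a subspace is injective. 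What your approach buys is generality and brevity: it works for any pair $\KK\hookrightarrow\LL$ and any choice of basis $Z$ of $C_{q+1}^{\LL,\KK}$, not just the basis produced by the weak column reduction, and it makes transparent that the hypothesis $B_{q+1}^{\LL,\KK}\neq 0$ only excludes the vacuous case $C_{q+1}^{\LL,\KK}=\{0\}$. What the paper's version buys is that it stays entirely within the explicit matrix bookkeeping ($C_0$ columns, $b_i$'s, removed rows) used throughout \cref{sec:up_pers_lap}, so the reader sees exactly which columns of the reduced matrix survive and why they are zero on the deleted rows. Either proof is acceptable; yours could even be stated as a standalone linear-algebra fact independent of \cref{prop:pers_lap}.
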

\begin{proof}
    From \cref{prop:pers_lap}, we have that $B_{q+1}^{\LL, \KK}$ is obtained from $B_{q+1}^\LL$ by performing left-to-right column operations, and removing the rows corresponding to simplices in $\LL-\KK$. Specifically, we have the following description: 
    \[B_{q+1}^{\LL, \KK}
    \coloneqq 
    B_{q+1}^\LL(I_q^{\KK}, C_0) \#
    \begin{bmatrix}
    b_1 & b_2 & \cdots & b_r
    \end{bmatrix}
    \]
    where the column indices $c_0^i$ from \cref{prop:pers_lap} (i.e., not the column indices of the $b_i$'s) correspond to columns in $B_{q+1}^\LL$ that are identically 0 on the rows that get removed. Furthermore,  each vector $b_i$ is obtained by adding to a column $B_q^\LL(:,c_i^{l_i})$ a number of columns in $B_q^\LL$ with column indices $k$ for $1\leq k< c_i^{l_i}$. Let the resulting column vector be $d_{i}$. Since $B_{q+1}^\LL$ is assumed to be of full column rank, we have that the matrix $A$ obtained by performing these column operations on $B_{q+1}^\LL$ is of full column rank. Moreover, by construction, each vector $d_{i}$ is 0 at the rows that ultimately get removed. Hence, we obtain the matrix $B_{q+1}^\LL$ by omitting rows of zeros from $A$. The result follows.
\end{proof}

\begin{proposition}\label{prop:realcheeger2}
Let $\KK\hookrightarrow \LL$ where $\LL$ is unweighted and $q$-non-branching. 
Suppose the boundary matrix $B_{q+1}^\LL$ has full column rank and $B_{q+1}^{\LL, \KK}$ is not $0$.
%Denote by $\lambda_{\mathrm{min}}^{\KK, \LL}$ the minimal non-zero eigenvalue of $\Delta_{q, \mathrm{up}}^{\KK, \LL}$.
Then,
$
%\min\limits_{i\leq h+r}\frac{\widehat{A}(P_i)}{V(P_i)}\leq 
\lambda_{\mathrm{min}}^{\KK, \LL}\leq \min\limits_{j\leq h+r} \frac{\mathbf{A}(P_j)}{\mathbf{V}(P_j)}.
$
\end{proposition}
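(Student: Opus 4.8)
The plan is to exploit the spectral equivalence established in \cref{lemma:same_eig_val}: it suffices to bound the minimal non-zero eigenvalue of $\Delta_{\widetilde{\KK}_{\textrm{dual}}} = (\id_h\oplus S^{-1})(B_{q+1}^{\LL, \KK})^T B_{q+1}^{\LL, \KK}$ from above by $\min_{j}\mathbf{A}(P_j)/\mathbf{V}(P_j)$. The key structural input is \cref{lemma:full_col_rnk}: under the hypothesis that $B_{q+1}^\LL$ has full column rank and $B_{q+1}^{\LL,\KK}\neq 0$, the matrix $B_{q+1}^{\LL,\KK}$ itself has full column rank. Consequently $(B_{q+1}^{\LL,\KK})^T B_{q+1}^{\LL,\KK}$ is positive definite (of size $(h+r)\times(h+r)$), and therefore $\Delta_{\widetilde{\KK}_{\textrm{dual}}}$, being similar to the positive definite matrix $(\id_h\oplus S^{-1})^{1/2}(B_{q+1}^{\LL,\KK})^T B_{q+1}^{\LL,\KK}(\id_h\oplus S^{-1})^{1/2}$, has \emph{all} of its $h+r$ eigenvalues strictly positive. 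In particular the minimal non-zero eigenvalue is genuinely the minimal eigenvalue over the whole spectrum.

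With that in hand, I would invoke the variational (Rayleigh quotient) characterization of $\lambda_{\min}$ of the symmetrized matrix $M := (\id_h\oplus S^{-1})^{1/2}(B_{q+1}^{\LL,\KK})^T B_{q+1}^{\LL,\KK}(\id_h\oplus S^{-1})^{1/2}$, whose eigenvalues coincide with those of $\Delta_{\widetilde{\KK}_{\textrm{dual}}}$. Testing the Rayleigh quotient $\frac{x^T M x}{x^T x}$ against the standard basis vector $e_j$ gives $\lambda_{\min}(M)\leq (M)_{jj}$ for each $j$. Now $(M)_{jj} = \frac{1}{\mathbf{V}(P_j)}\cdot\big((B_{q+1}^{\LL,\KK})^T B_{q+1}^{\LL,\KK}\big)_{jj} = \frac{\|B_{q+1}^{\LL,\KK}(:,j)\|^2}{\mathbf{V}(P_j)} = \frac{\mathbf{A}(P_j)}{\mathbf{V}(P_j)}$, using that the entries of $B_{q+1}^{\LL,\KK}$ lie in $\{-1,0,1\}$ so the squared norm of a column equals its number of non-zero entries, and that $(\id_h\oplus S^{-1})_{jj}$ equals $1$ for the $h$ ``polyhedron-free'' columns (where $\mathbf{V}(P_j)=1$) and $l_j^{-1} = \mathbf{V}(P_j)^{-1}$ otherwise. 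Taking the minimum over $j\leq h+r$ yields the claimed inequality.

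The one point requiring a little care — and the main obstacle — is the passage between $\Delta_{\widetilde{\KK}_{\textrm{dual}}}$ (which is not symmetric) and the symmetric matrix $M$ whose Rayleigh quotient I want to use: one must check that they are similar (conjugate by $(\id_h\oplus S^{-1})^{1/2}$, which is legitimate since $S^{-1}$ is diagonal with positive entries) and hence share the same spectrum, and separately that this common spectrum is strictly positive so that ``minimal non-zero eigenvalue'' really means ``minimal eigenvalue.'' Both follow from \cref{lemma:full_col_rnk} together with the positive-definiteness of $(\id_h\oplus S^{-1})$; once this is set up, the Rayleigh-quotient test against $e_j$ is routine. Combined with \cref{prop:realcheeger1}, this sandwiches $\lambda_{\min}^{\KK,\LL}$ between $\min_j \widehat{\mathbf{A}}(P_j)/\mathbf{V}(P_j)$ and $\min_j \mathbf{A}(P_j)/\mathbf{V}(P_j)$, which is the Cheeger-type statement \cref{prop:realcheeger}.
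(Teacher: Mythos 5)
Your proposal is correct and takes essentially the same route as the paper's proof: reduce to $\Delta_{\widetilde{\KK}_{\mathrm{dual}}}$ via \cref{lemma:same_eig_val}, invoke \cref{lemma:full_col_rnk} for positive-definiteness, and bound the smallest eigenvalue by the smallest diagonal entry, which equals $\min_{j\leq h+r}\mathbf{A}(P_j)/\mathbf{V}(P_j)$. Your explicit conjugation by $(\id_h\oplus S^{-1})^{1/2}$ simply makes precise the Courant--Fischer step that the paper applies directly to the (non-symmetric) dual Laplacian, whose diagonal agrees with that of your symmetrized matrix.
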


\begin{proof} 
Since $B_{q+1}^{\LL}$ has full column rank, the matrix $B_{q+1}^{\LL, \KK}$ has full column rank as well; see \cref{lemma:full_col_rnk}.
Hence, the matrix $\Delta_{\widetilde{\KK}_{\textrm{\textrm{dual}}}}$ is invertible 
and therefore $\Delta_{\widetilde{\KK}_{\textrm{\textrm{dual}}}}$ is positive-definite. By the Courant-Fischer-Weyl Min-Max theorem for eigenvalues of positive-definite matrices, we conclude that the minimal eigenvalue of $\Delta_{\widetilde{\KK}_{\textrm{\textrm{dual}}}}$ is at most 
the smallest diagonal entry of $\Delta_{\widetilde{\KK}_{\textrm{\textrm{dual}}}}$ which equals $\min\limits_{j\leq h+r} \frac{\mathbf{A}(P_j)}{\mathbf{V}(P_j)}$. The proposition now follows from the fact that 
$\Delta_{\widetilde{\KK}_{\textrm{\textrm{dual}}}}$ and $\Delta_{q, \mathrm{up}}^{\KK, \LL}$ have the same minimal non-zero eigenvalue; see \cref{lemma:same_eig_val}.
\end{proof}

\begin{remark}
If $\LL$ is $(q+1)$-dimensional, then the condition that $B_{q+1}^\LL$ has full column rank (i.e., linearly independent columns) is equivalent to $H_{q+1}(\LL) = 0$. In particular, if $\LL$ is any $(q+1)$-dimensional simplicial complex embedded in $\mathbb{R}^{q+1}$, then $\LL$ is both $q$-non-branching and satisfies $H_{q+1}(\LL) = 0$. Hence, \cref{prop:realcheeger2} applies. 
\end{remark}

Combining \cref{prop:realcheeger1} and \cref{prop:realcheeger2} above together,
we form a Cheeger-type inequality with respect to the up persistent Laplacian $\Delta_{q, \textrm{up}}^{\KK, \LL}$. 
We summarize the result below:

\begin{theorem}\label{prop:realcheeger}
Let $\KK\hookrightarrow \LL$ where $\LL$ is unweighted and $q$-non-branching. 
Suppose the boundary matrix $B_{q+1}^\LL$ has full column rank and $B_{q+1}^{\LL, \KK}$ is not $0$.
%Denote by $\lambda_{\mathrm{min}}^{\KK, \LL}$ the minimal non-zero eigenvalue of $\Delta_{q, \mathrm{up}}^{\KK, \LL}$.
Then,
\[
\min\limits_{j\leq h+r}\frac{\widehat{\mathbf{A}}(P_j)}{\mathbf{V}(P_j)} \leq \lambda_{\mathrm{min}}^{\KK, \LL}\leq \min\limits_{j\leq h+r} \frac{\mathbf{A}(P_j)}{\mathbf{V}(P_j)}.
\]
\end{theorem}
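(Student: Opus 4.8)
The plan is to read off \cref{prop:realcheeger} directly by stacking the two one-sided bounds that have already been established, since nothing new is required beyond \cref{prop:realcheeger1} and \cref{prop:realcheeger2}. Both of those rest on the same structural input: by \cref{prop:cheeger} the up persistent Laplacian is the hypergraph Laplacian of $\widetilde{\KK}$, its transpose-analogue $\Delta_{\widetilde{\KK}_{\mathrm{dual}}}=(\id_h\oplus S^{-1})(B_{q+1}^{\LL,\KK})^TB_{q+1}^{\LL,\KK}$ admits a completely explicit entrywise description, and by \cref{lemma:same_eig_val} the matrices $\Delta_{q,\mathrm{up}}^{\KK,\LL}$ and $\Delta_{\widetilde{\KK}_{\mathrm{dual}}}$ share the same non-zero spectrum. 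So the first step is simply to recall that $\lambda_{\mathrm{min}}^{\KK,\LL}$ may equally be computed as the smallest non-zero eigenvalue of $\Delta_{\widetilde{\KK}_{\mathrm{dual}}}$.

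For the lower bound I would invoke \cref{prop:realcheeger1}: the $(i,j)$-entry of $\Delta_{\widetilde{\KK}_{\mathrm{dual}}}$ is $\mathbf{A}(P_i\cap P_j)/\mathbf{V}(P_i)$ for $i\neq j$ and $\mathbf{A}(P_i)/\mathbf{V}(P_i)$ on the diagonal, so the Ger\v{s}gorin disc $D(a_{ii},R_i)$ from \cref{thm:gersgorin} has left endpoint $a_{ii}-R_i=(\mathbf{A}(P_i)-\sum_{j\neq i}\mathbf{A}(P_i\cap P_j))/\mathbf{V}(P_i)=\widehat{\mathbf{A}}(P_i)/\mathbf{V}(P_i)$. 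Since every eigenvalue lies in $\bigcup_i D(a_{ii},R_i)$, in particular $\lambda_{\mathrm{min}}^{\KK,\LL}\geq \min_{j\leq h+r}\widehat{\mathbf{A}}(P_j)/\mathbf{V}(P_j)$. This half needs no rank hypothesis and remains (trivially) valid when the right-hand minimum is nonpositive, because $\Delta_{\widetilde{\KK}_{\mathrm{dual}}}$ is positive semidefinite.

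For the upper bound I would invoke \cref{prop:realcheeger2}, whose content is: because $B_{q+1}^\LL$ has full column rank and $B_{q+1}^{\LL,\KK}\neq 0$, \cref{lemma:full_col_rnk} gives that $B_{q+1}^{\LL,\KK}$ also has full column rank, so $(B_{q+1}^{\LL,\KK})^TB_{q+1}^{\LL,\KK}$ — and hence $\Delta_{\widetilde{\KK}_{\mathrm{dual}}}$ after the positive diagonal rescaling — is positive definite. Applying the Courant--Fischer--Weyl min--max characterisation and testing with the standard basis vectors bounds the minimal eigenvalue of $\Delta_{\widetilde{\KK}_{\mathrm{dual}}}$ above by its smallest diagonal entry, namely $\min_{j\leq h+r}\mathbf{A}(P_j)/\mathbf{V}(P_j)$; transferring back through \cref{lemma:same_eig_val} yields the claimed upper bound on $\lambda_{\mathrm{min}}^{\KK,\LL}$. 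Concatenating the two inequalities completes the proof.

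There is no serious obstacle in the theorem itself; the substantive work has been front-loaded into the Kron-reduction identification (\cref{prop:cheeger}), the eigenvalue transfer (\cref{lemma:same_eig_val}), and \cref{lemma:full_col_rnk}. If I had to name the one delicate point, it is bookkeeping around the word \emph{non-zero}: one must be careful that the Ger\v{s}gorin lower bound holds for \emph{all} eigenvalues (so a fortiori for the smallest non-zero one), whereas the Courant--Fischer argument only produces a bound on the smallest eigenvalue and therefore genuinely requires the positive-definiteness supplied by the full-column-rank hypothesis — this is precisely why the upper bound, unlike the lower bound, needs $B_{q+1}^\LL$ to have independent columns.
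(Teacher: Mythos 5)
Your proposal is correct and follows the paper's own route exactly: the theorem is obtained by concatenating \cref{prop:realcheeger1} (Ger\v{s}gorin on $\Delta_{\widetilde{\KK}_{\textrm{dual}}}$, no rank hypothesis needed) with \cref{prop:realcheeger2} (full column rank via \cref{lemma:full_col_rnk}, positive definiteness, and Courant--Fischer), transferring the non-zero spectrum through \cref{lemma:same_eig_val}. Your closing remark about why only the upper bound needs the rank hypothesis matches the paper's reasoning as well.
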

From this theorem, we obtain the following corollary.
\begin{corollary}
Let $\KK\hookrightarrow \LL$ where $\LL$ is unweighted and $q$-non-branching. 
Suppose the boundary matrix $B_{q+1}^\LL$ has full column rank and $B_{q+1}^{\LL, \KK}$ is not $0$. Then,
\[
\lambda_{\mathrm{min}}^{\KK, \LL}\leq q+2.
\]
\end{corollary}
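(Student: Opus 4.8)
The plan is to read the bound straight off the Cheeger-type inequality of \cref{prop:realcheeger}. Under the present hypotheses ($\LL$ unweighted and $q$-non-branching, $B_{q+1}^\LL$ of full column rank, $B_{q+1}^{\LL, \KK}\neq 0$), that theorem gives
\[
\lambda_{\mathrm{min}}^{\KK, \LL} \leq \min_{j \leq h+r} \frac{\mathbf{A}(P_j)}{\mathbf{V}(P_j)},
\]
so it is enough to show $\mathbf{A}(P_j) \leq (q+2)\,\mathbf{V}(P_j)$ for at least one index $j$; in fact I will check that it holds for every $j$.

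First I would unwind the two quantities. By construction (the discussion preceding \cref{prop:cheeger} together with \cref{prop:pers_lap}), the column $B_{q+1}^{\LL, \KK}(:, j)$ is the restriction to the rows $I_q^\KK$ of a $\pm1$-signed sum of $\mathbf{V}(P_j)$ columns of $B_{q+1}^\LL$ — those indexed by the component $C_j$, or a single column when $j\leq h$ — while $\mathbf{A}(P_j)$ is the number of its non-zero entries and $\mathbf{V}(P_j)$ is the number of $(q+1)$-simplices making up $P_j$.

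The only real ingredient is the elementary fact that each $(q+1)$-simplex has exactly $q+2$ faces of dimension $q$, i.e.\ every column of $B_{q+1}^\LL$ has exactly $q+2$ non-zero entries. Consequently a sum of $\mathbf{V}(P_j)$ such columns is supported on at most $(q+2)\,\mathbf{V}(P_j)$ rows, and discarding the rows outside $I_q^\KK$ only decreases this count; hence $\mathbf{A}(P_j) \leq (q+2)\,\mathbf{V}(P_j)$ for all $j\leq h+r$, so $\min_{j \leq h+r} \mathbf{A}(P_j)/\mathbf{V}(P_j) \leq q+2$ and the claim follows.

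I do not expect any genuine obstacle here: all the substance lives in \cref{prop:realcheeger}, and what remains is the observation that the boundary of a union of $N$ top-cells uses at most $(q+2)N$ codimension-one cells. The one point worth flagging is the degenerate case $j\leq h$, where $P_j$ is a single simplex whose $q$-faces all lie in $\KK$ and the ratio equals exactly $q+2$; this still respects the claimed inequality, so no separate argument is needed.
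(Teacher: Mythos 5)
Your proposal is correct and follows the paper's own route: both invoke the upper bound of \cref{prop:realcheeger} and then use the fact that a $(q+1)$-simplex has exactly $q+2$ faces of dimension $q$, so that $\mathbf{A}(P_j)\leq (q+2)\,\mathbf{V}(P_j)$ for every polyhedron $P_j$. Your only addition is to spell out this counting at the level of supports of signed column sums of $B_{q+1}^{\LL}$, which the paper states more briefly as $f_q\leq (q+2)f_{q+1}$.
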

\begin{proof}
    A $(q+1)$-simplex has precisely $q+2$ faces. In particular, if $f_{q}$ denotes the number of $q$-simplices in any simplicial complex, then $f_{q} \leq (q+2)f_{q+1}$ with equality precisely when you have a disjoint union of $(q+1)$-simplices. The bound follows readily. 
\end{proof}
To see that this bound is tight, let $\KK=\LL$ be equal to the standard $(q+1)$-simplex. Then, the matrix form of $\partial_{q+1}^{\KK, \LL}$ equals 
\[ \begin{bmatrix} 1 & -1 & 1 & \cdots & (-1)^{q+1} \end{bmatrix}^T.\]
Since $\mathbf{\widehat{A}}(P_1) = \mathbf{A}(P_1)$ in this case, $\lambda_{\mathrm{min}}^{\KK, \LL} = \mathbf{A}(P_1)/1 = q+2$ by \cref{prop:realcheeger}.

\begin{remark}
    For cubical complexes, one has $\lambda_{\mathrm{min}}^{\KK, \LL}\leq 2(q+1).$
\end{remark}

\subsection{An example}
\label{ex_1}
Consider the simplicial complexes $\KK\hookrightarrow \LL$ in \cref{fig_simplicial_cmplexes}. 
\begin{figure}[H]
  \centering
  \begin{subfigure}[b]{0.2\textwidth}
\begin{tikzpicture}
    % Define the length of the side of the triangle
    \def\side{1.3}
    % Draw the equilateral triangle centered at (0,0)
    \filldraw[fill=yellow!40] (90: \side) -- (210: \side) -- (330: \side) -- cycle;
    % Connect the center with all three vertices
    \draw (0,0) -- (90: \side);
    \draw (0,0) -- (210: \side);
    \draw (0,0) -- (330: \side);

    % Label the vertices and the center
    \node at (90: \side + 0.3) {1};
    \node at (210: \side + 0.3) {2};
    \node at (330: \side + 0.3) {3};
    \node at (-0.2,0) [below right] {4}; % You can adjust the position of this label

    % Optionally, mark the center
    \fill (90: \side) circle (2pt);  % Vertex 1
    \fill (210: \side) circle (2pt); % Vertex 2
    \fill (330: \side) circle (2pt); % Vertex 3
    \fill (0,0) circle (2pt);        % Center 4
\end{tikzpicture}
\caption{$\LL$}%
\end{subfigure}%
\quad
\begin{subfigure}[b]{0.2\textwidth}
\begin{tikzpicture}
    % Define the length of the side of the triangle
    \def\side{1.3}

    % Fill the equilateral triangle centered at (0,0)
    % Use \fill instead of \filldraw to avoid drawing the edge between 1 and 2
    %\fill[fill=yellow!40] (90: \side) -- (210: \side) -- (330: \side) -- cycle;
    
    % Draw the edges between the other two pairs of vertices
    \draw (210: \side) -- (330: \side); % Edge between vertices 2 and 3
    \draw (330: \side) -- (90: \side);  % Edge between vertices 3 and 1
    \draw (90: \side) -- (210: \side);

    % Connect the center with all three vertices
    %\draw (0,0) -- (90: \side);
    \draw (0,0) -- (210: \side);
    \draw (0,0) -- (330: \side);

    % Fill the vertices with yellow circles (2pt)
    \fill (90: \side) circle (2pt);  % Vertex 1
    \fill (210: \side) circle (2pt); % Vertex 2
    \fill (330: \side) circle (2pt); % Vertex 3
    \fill (0,0) circle (2pt);        % Center 4

    % Label the vertices and the center
    \node at (90: \side + 0.3) {1};
    \node at (210: \side + 0.3) {2};
    \node at (330: \side + 0.3) {3};
    \node at (-0.2,0) [below right] {4};
\end{tikzpicture}
\caption{$\KK$.}
\end{subfigure}
\caption{The simplicial complexes $\KK\hookrightarrow \LL$ associated with Example \ref{ex_1}.}%
\label{fig_simplicial_cmplexes}
\end{figure}
\paragraph{The Cheeger inequality}
We first assume that both $\KK$ and $\LL$ are unweighted.
The boundary operator $\partial_2^{\LL}: C_2^{\LL}\to C_1^{\LL}$ can be expressed as:
\begin{equation*}
B_2^{\LL}=
\begin{bNiceMatrix}[
  first-row,code-for-first-row=\scriptstyle,
  first-col,code-for-first-col=\scriptstyle,
]
& [124] & [143] & [234] \\
[12] & 1 & 0 & 0  \\
[24] & 1 & 0 & -1  \\
[14] & -1 & 1 & 0  \\
[13] & 0 & -1 & 0 \\
[34] & 0 & -1 & 1 \\
[23] & 0 & 0 & 1  \\
\end{bNiceMatrix},
\end{equation*}
and therefore
$
D_2^{\LL}
=
\begin{bNiceMatrix}[
  first-row,code-for-first-row=\scriptstyle,
  first-col,code-for-first-col=\scriptstyle,
]
& [124] & [143] & [234] \\
[14] & -1 & 1 & 0  \\
\end{bNiceMatrix}.
$
%The transpose matrix $(D_2^{\LL})^T$ is equivalent to the incidence matrix of a hypergraph with two components as shown in Figure \ref{fig_hg_incd_mtx_1}.
%The column-index sets corresponding to the two components are $C_1=\{1, 2\}$ and $C_2=\{3\}$.
%The indices of $\bar{S}_1^{\KK_2}$ with respect to the matrix representation $B_2^{\LL}$ are $I_{1}^{\KK_2}=\{1, 2, 4, 5, 6, 7\}$.
%Let $b_1=B_2^{\LL}(I_1^{\KK_2}, C_1)\boldsymbol{1}_2$,
%$b_2=B_2^{\LL}(I_1^{\KK_2}, C_2)\boldsymbol{1}_1$.
%Then
%$b_1
%=
%\begin{bmatrix}
%1 & 1 & -1 & -1 & 0
%\end{bmatrix}^T
%$,
%$b_2
%=
%\begin{bmatrix}
%0 & -1 & 0 & 1 & 1
%\end{bmatrix}^T
%$.
Hence, from \cref{prop:pers_lap}, 
we obtain that
\[
B_2^{\LL, \KK}
=
\begin{bNiceMatrix}[
  first-row,code-for-first-row=\scriptstyle,
  first-col,code-for-first-col=\scriptstyle,
]
 &  &  \\
[12]   & 1 & 0 \\
[24]   & 1 & -1 \\
[13]   & -1 & 0 \\
[34]   & -1 & 1 \\
[23]   & 0 & 1 \\
\end{bNiceMatrix}.
\]

%\input{fig_hg_incd_mtx_1_file.tex}

%Finally we need to compute the diagonal matrix $S$.
Following the notation from \cref{prop:cheeger}, we get
$
S
=
\begin{bmatrix}
    2 & 0\\
    0 & 1
\end{bmatrix},$ and from \cref{thm:fastcomputation},
\[
\Delta_{1, \mathrm{up}}^{\KK, \LL}
=
B_2^{\LL, \KK} S^{-1} (B_2^{\LL, \KK})^T
=
\begin{bmatrix}
 1 & 0 \\
 1 & -1 \\
 -1 & 0 \\
 -1 & 1 \\
 0 & 1 \\
\end{bmatrix}
\begin{bmatrix}
    1/2 & 0\\
    0 & 1
\end{bmatrix}
\begin{bmatrix}
    1 & 1 & -1 & -1 & 0\\
    0 & -1 & 0 & 1 & 1
\end{bmatrix}.
\]
Moreover,
the up persistent Laplacian $\Delta_{1, \mathrm{up}}^{\KK, \LL}$ is equal to the Laplacian over the oriented hypergraph $\widetilde{\KK}$ as shown in \cref{fig_hg_ex_1}.
\begin{figure}[H]
    \centering
\begin{tikzpicture}
    \def\side{1.1}
    
    \begin{scope}[fill opacity=0.7]
    \filldraw[fill=yellow!40] (0,0) ellipse [x radius=2*\side, y radius=1.5*\side];

    \filldraw[fill=red!40] (0,-1.5*\side) ellipse [x radius=2*\side, y radius=1.5*\side];
    \end{scope}

   % \node (143) at (-0.5*\side, -1.3*\side) {};

    \fill (-0.5*\side, -0.8*\side) circle (2pt) node [above left] {$[24]+$}; 

    \fill (0.5*\side, -0.8*\side) circle (2pt) node [above right] {$[34]-$}; 

    \fill (-0.5*\side, -0.8*\side) circle (2pt) node [below left] {$[24]-$}; 

    \fill (0.5*\side, -0.8*\side) circle (2pt) node [below right] {$[34]+$}; 

    \fill (0.5*\side, 0.8*\side) circle (2pt) node [below right] {$[13]-$};

    \fill (-0.5*\side, 0.8*\side) circle (2pt) node [below left] {$[12]+$};

    \fill (0, -1.8*\side) circle (2pt) node [below] {$[23]+$};
\end{tikzpicture}
\caption{The weighted oriented hypergraph $\widetilde{\KK}$,
each vertex has weight $1$, 
the yellow hyperedge has weight $1/2$, 
and the red hyperedge has weight $1$.
The Laplacian $\Delta_0^{\widetilde{\KK}}$ is equal to the up persistent Laplacian $\Delta_{1, \mathrm{up}}^{\KK, \LL}$ with respect to $\KK\hookrightarrow \LL$ associated with Figure \ref{fig_simplicial_cmplexes}.}
\label{fig_hg_ex_1}
\end{figure}
Furthermore, the Laplacian of the dual hypergraph is 
\[
\Delta_{\widetilde{\KK}_{\textrm{\textrm{dual}}}}
=
S^{-1} (B_2^{\LL, \KK})^T B_2^{\LL, \KK} 
=
\begin{bmatrix}
    4/2 & -2/2\\
    -2 & 3
\end{bmatrix}.
\]
Notice that the diagonal entries of $\Delta_{\mathrm{dual}}$ are $4/2$ and $3$, measuring the ratio of the ``perimeter'' (number of edges) to the ``area'' (number of 2-simplices) of the polyhedra $[1243]$ and $[243]$,
\[
\frac{\mathbf{A}([1243])}{\mathbf{V}([1243])} = 4/2, \qquad \frac{\mathbf{A}([243])}{\mathbf{V}([243])}=\frac{3}{1}.
\]
%ratio of the perimeter to the area with respect to the polyhedra $[1243]$ and $[243]$ in Figure \ref{fig_simplicial_cmplexes}.
Moreover,
\[
\frac{\widehat{\mathbf{A}}([1243])}{\mathbf{V}([1243])}=\frac{2}{2},
\quad
\frac{\widehat{\mathbf{A}}([243])}{\mathbf{V}([243])}=\frac{1}{1},
\]
hence the Cheeger-type inequality gives rise to
\[
1\leq \lambda_{\min}^{\KK, \LL}\leq 2.
\]
In fact,
by a direct computation we can obtain that $\lambda_{\min}^{\KK, \LL}=1$.

\section{Computing eigenvalues}
\label{sec:eigenvalues}
As mentioned in \cref{subsec:kron_red}, the eigenvalues of graph and Hodge Laplacians carry important topological and geometrical information. 
However, 
there are two drawbacks of computing the eigenvalues of the up persistent Laplacian $\Delta_{q, \text{up}}^{\KK, \LL}$ directly.
First,
it is computationally intensive. 
The construction of this matrix requires a quadratic number of operations in the number of simplices (\cref{prop:alg_lap}), and computing its full eigenvalue spectrum generally requires $O\left((n_q^{\KK})^3\right)$ operations. Furthermore, for moderately sized complexes, $\Delta_{q, \text{up}}^{\KK, \LL}$ can become unmanageably large and dense, rendering a spectral theory approach impractical for many applications. 
Second,
the up persistent Laplacian $\Delta_{q, \text{up}}^{\KK, \LL}$ relies on the weight matrices $W_{q+1}^{\LL, \KK}$ and $W_q^\KK$,
in practical application,
when the entries in $W_{q+1}^{\LL, \KK}$ and $(W_{q}^\KK)^{-1}$ are close to the machine epsilon
\footnote{The machine epsilon $\epsmach$ is identified to be the smallest positive number such that $1.0 + \epsmach\neq 1.0$ for a specific computer being used.}
for the device being used,
it may lead to erroneous results to compute eigenvalues directly from the up persistent Laplacian $\Delta_{q, \text{up}}^{\KK, \LL}$.
The readers can consult \cref{ex:numeric} for a concrete example.

%directly computing the eigenvalues of the up persistent Laplacian, $\Delta_{q, \text{up}}^{\KK, \LL}$, is computationally intensive. The construction of this matrix requires a quadratic number of operations in the number of simplices (\cref{prop:alg_lap}), and computing its full eigenvalue spectrum generally requires $O\left((n_q^{\KK})^3\right)$ operations. Furthermore, for moderately sized complexes, $\Delta_{q, \text{up}}^{\KK, \LL}$ can become unmanageably large and dense, rendering a spectral theory approach impractical for many applications.
To overcome these computational and memory barriers, a more efficient method is available that leverages the singular values of a related sparse matrix.
The method relies on the fact that the eigenvalues of
\[
    \Delta_{q, \text{up}}^{\KK, \LL} = B_{q+1}^{\LL, \KK} W_{q+1}^{\LL, \KK}\left(B_{q+1}^{\LL, \KK}\right)^{T}(W_{q}^\KK)^{-1}
\]
are identical to the eigenvalues of a symmetrized matrix $\tilde{\Delta}$. This is achieved via a similarity transformation:
$$ 
    \tilde{\Delta} = (W_q^{\KK})^{-1/2}\Delta_{q, \mathrm{up}}^{\KK, \LL}(W_q^{\KK})^{1/2} = (W_q^{\KK})^{-1/2} \left(B_{q+1}^{\LL, \KK} W_{q+1}^{\LL, \KK}\left(B_{q+1}^{\LL, \KK}\right)^{T}\right) (W_q^{\KK})^{-1/2}.
$$
This matrix can be factored as $\tilde{\Delta} = M M^T$, where the matrix $M$ is given by:
$$ 
    M = (W_q^{\KK})^{-1/2}B_{q+1}^{\LL, \KK} (W_{q+1}^{\LL, \KK})^{1/2}.
$$
Here the non-zero eigenvalues of $M M^T$ are the squares of the non-zero singular values of $M$.  This connection is computationally advantageous because the diagonal weight matrices do not alter the sparsity pattern of the boundary matrix, meaning $M$ is sparse.

The largest singular values of a sparse matrix $M$ can be computed efficiently using iterative methods such as the Golub-Kahan-Lanczos (GKL) algorithm; a generalization of the \emph{power method} to find the largest eigenvalue. After $k$ iterations, this procedure generates a $k \times k$ bidiagonal matrix whose singular values approximate $k$ singular values of $M$. Which $k$ singular values the algorithm approximates depends on the choice of starting vector and the distribution of singular values, but generically the largest singular value will be well-approximated. Importantly, the convergence rate of the GKL algorithm is not uniform across the spectrum; convergence to the largest singular values is much quicker, and they can thus be approximated with high accuracy using relatively few iterations. For a detailed treatment of the GKL algorithm, we refer the reader to~\cite{MR3024913, MR1792141} and \cref{appdx_GKL}. We summarize the relevant results below.

\begin{proposition}\label{prop:gkl}
Let $\KK\hookrightarrow \LL$ be a pair of simplicial complexes as described in \cref{thm:fastcomputation}. 
By iterating $k$ steps,
the GKL algorithm computes approximations to $k$ eigenvalues of $\Delta_{q, \text{up}}^{\KK, \LL}$ in 
%The $k$ largest eigenvalues of $\Delta_{q, \text{up}}^{\KK, \LL}$ can be computed in approximately 
\[
O\left((n_q^\LL-n_q^\KK)\times \alpha(n_{q+1}^\LL)+k\times (n_{q+1}^\LL+n_q^\KK)+k^2\right).
\]
%\[O\left(k^2(n^{\KK}_q+n_{q+1}^{\LL}) + (n_q^\LL-n_q^\KK)\times \alpha(n_{q+1}^\LL)\right)\] arithmetic operations.
\end{proposition}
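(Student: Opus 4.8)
The plan is to convert the eigenvalue problem for $\Delta_{q,\mathrm{up}}^{\KK,\LL}$ into a sparse singular value problem and then account for the cost of each stage. First I would invoke \cref{rmk:cmpt_time} to obtain the pair $(B_{q+1}^{\LL,\KK},W_{q+1}^{\LL,\KK})$ in sparse format using $O\big((n_q^\LL-n_q^\KK)\alpha(n_{q+1}^\LL)+n_{q+1}^\LL+n_q^\KK\big)$ operations, and then form
\[
M=(W_q^{\KK})^{-1/2}B_{q+1}^{\LL, \KK}(W_{q+1}^{\LL, \KK})^{1/2};
\]
since $W_q^\KK$ and $W_{q+1}^{\LL,\KK}$ are diagonal and positive, the scalings cost $O(n_q^\KK+n_{q+1}^\LL)$ and do not alter the sparsity pattern of $B_{q+1}^{\LL,\KK}$. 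The structural fact I would record here is that $B_{q+1}^{\LL,\KK}$ is itself of non-branching type: by \cref{thm:weakfactor} the relevant column supports are disjoint, and since $\LL$ is $q$-non-branching every $q$-simplex of $\KK$ is a face of at most two $(q+1)$-simplices of $\LL$, hence lies in the boundary of at most two of the polyhedra indexing the columns of $B_{q+1}^{\LL,\KK}$; therefore each row of $B_{q+1}^{\LL,\KK}$, and of $M$, has at most two non-zero entries. In particular $M$ has $O(n_q^\KK)$ non-zeros, at most $n_q^\KK$ rows, and $h+r\le n_{q+1}^\LL$ columns.

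Next I would record the spectral reduction. By \cref{thm:fastcomputation}, $\Delta_{q,\mathrm{up}}^{\KK,\LL}=B_{q+1}^{\LL,\KK}W_{q+1}^{\LL,\KK}(B_{q+1}^{\LL,\KK})^T(W_q^\KK)^{-1}$, so conjugation by the invertible diagonal matrix $(W_q^\KK)^{1/2}$ yields the symmetric matrix $\tilde\Delta=(W_q^\KK)^{-1/2}\Delta_{q,\mathrm{up}}^{\KK,\LL}(W_q^\KK)^{1/2}=MM^T$; a similarity transformation preserves eigenvalues, and the non-zero eigenvalues of $MM^T$ are exactly the squares of the non-zero singular values of $M$. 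Thus it suffices to approximate $k$ singular values of $M$. I would then run $k$ steps of the Golub-Kahan-Lanczos bidiagonalization applied to the sparse operator $v\mapsto Mv$, $u\mapsto M^Tu$: each step performs one product with $M$ and one with $M^T$ together with a bounded number of vector updates, at cost $O\big(\mathrm{nnz}(M)+n_q^\KK+h+r\big)=O\big(n_q^\KK+n_{q+1}^\LL\big)$, for a total of $O\big(k(n_q^\KK+n_{q+1}^\LL)\big)$ over the $k$ steps. The procedure outputs a $k\times k$ bidiagonal matrix whose singular values approximate $k$ singular values of $M$ (the largest ones converging fastest); extracting them, e.g., through the eigenvalues of the associated symmetric tridiagonal $k\times k$ matrix, costs $O(k^2)$. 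Summing the three contributions and absorbing $n_{q+1}^\LL+n_q^\KK$ into $k(n_{q+1}^\LL+n_q^\KK)$ gives the stated bound; for the convergence and complexity properties of GKL I would refer to \cite{MR3024913,MR1792141} and \cref{appdx_GKL}.

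The main obstacle I anticipate is the bookkeeping in the first stage, specifically the claim that $B_{q+1}^{\LL,\KK}$ has at most two non-zero entries per row, which is what makes $M$ genuinely sparse and is needed for the per-iteration cost; this requires combining the disjoint-support conclusion of \cref{thm:weakfactor} with the $q$-non-branching hypothesis rather than quoting either alone. A secondary point requiring care is the precise meaning of ``computes approximations to $k$ eigenvalues'': the Lanczos output is an approximation whose accuracy depends on the spectral gaps of $M$, so I would state this behaviour qualitatively and defer the quantitative convergence analysis to the references, noting that the operation count itself is independent of it.
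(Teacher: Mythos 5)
Your proposal is correct and follows essentially the same route as the paper: build the sparse matrix $M=(W_q^{\KK})^{-1/2}B_{q+1}^{\LL,\KK}(W_{q+1}^{\LL,\KK})^{1/2}$ via the weak-column-reduction machinery (\cref{prop:alg_lap}/\cref{rmk:cmpt_time}), use the similarity $\tilde\Delta=MM^T$ so eigenvalues become squared singular values, run $k$ GKL steps at $O(n_q^\KK+n_{q+1}^\LL)$ each (\cref{prop:GKL_speed}), and finish with $O(k^2)$ for the bidiagonal matrix. Your extra bookkeeping on the row-sparsity of $B_{q+1}^{\LL,\KK}$ just makes explicit what the paper uses implicitly in \cref{prop:alg_lap}, so there is no substantive difference.
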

\begin{proof}
The total complexity arises from three main steps. First, the cost of constructing the necessary components of the matrix $M$ is given by $(n_q^\LL-n_q^\KK)\times \alpha(n_{q+1}^\LL)$, as established in \cref{prop:alg_lap}. Second, the cost of computing a $k\times k$ bidiagonal matrix using GKL algorithm is bounded by $O(k\times (n_{q+1}^\LL+n_q^\KK))$ (see \cref{prop:GKL_speed}).
Finally,
according to \cite[Section 6.2]{MR1792141},
it costs $O(k^2)$ to compute the eigenvalues of a $k\times k$ bidiagonal matrix.
Hence the result follows.
%computing the top $k$ singular values of $M$ using the GKL algorithm is $O(k \cdot \mathrm{nnz}(M) + k^2(n^{\KK}_q+n_{q+1}^{\LL}))$, where $\mathrm{nnz}(M) = \mathrm{nnz}(B_{q+1}^{\LL, \KK}) \leq 2n^{\KK}_{q}$ since $\LL$ is $q$-non-branching. The result follows.
\end{proof}

From \cref{cor:filt}, this result directly carries over to a simplex-wise filtration.

\begin{corollary}\label{cor:filt_complexity} % Changed label from "cor:filt" to avoid duplication
Let $\KK_0\hookrightarrow\KK_1 \hookrightarrow\cdots\hookrightarrow \KK_m \hookrightarrow \LL$ be a $q$-non-branching simplicial filtration where each $\KK_i$ contains exactly one more $q$-simplex than $\KK_{i-1}$ for each $1\leq i\leq m$. 
The GKL algorithm computes approximations to $k$ eigenvalues of the matrices in $\{\Delta_{q, \text{up}}^{\KK_i, \LL}\}_{i=0}^{m}$ in
\[
O\Big(n_q^\LL\times \left((k+1)\times (n_{q+1}^\LL+n_q^\LL)+k^2\right)\Big)
\]
%Then, the $k$ largest eigenvalues of the matrices in $\{\Delta_{q, \text{up}}^{\KK_i, \LL}\}_{i=0}^{m}$ can be computed in approximately
%\[
%O(k^2\times n_q^\LL\times (n_q^\LL + n_{q+1}^\LL)).
%\] arithmetic operations.
arithmetic operations.
In particular, the corresponding computation for all the matrices $\{\Delta_{q, \text{up}}^{\KK_i, \KK_j}\}_{i\leq j}$ can be done in
\[
O\Big((n_q^\LL)^2\times \left((k+1)\times (n_{q+1}^\LL+n_q^\LL)+k^2\right)\Big)
\]
%\[
%O(k^2\times (n_q^\LL)^2\times (n_q^\LL + n_{q+1}^\LL))
%\] 
arithmetic operations.
\end{corollary}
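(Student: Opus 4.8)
The plan is to assemble this from two ingredients already in place: \cref{cor:filt}, which produces all the rescaled boundary matrices of the filtration in one amortized pass, and the GKL cost estimate from \cref{prop:gkl}. The one conceptual point is that one must use \cref{cor:filt} rather than applying \cref{prop:gkl} independently to each pair $(\KK_i,\LL)$, so that the near-linear $\alpha(\cdot)$ overhead of the incremental weak column reductions is paid once over the whole filtration instead of $\Theta(m)$ times.

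First I would invoke \cref{cor:filt} to compute, in a single sweep, the matrices $M_i := (W_q^{\KK_i})^{-1/2}B_{q+1}^{\LL, \KK_i}(W_{q+1}^{\LL, \KK_i})^{1/2}$ for $0\le i\le m$, at a cost of
$O\!\left((n_q^\LL-n_q^{\KK_m}+m)\,\alpha(n_{q+1}^\LL)+(m+1)(n_{q+1}^\LL+n_q^{\KK_m})\right)$
arithmetic operations. Each $M_i$ is sparse, since multiplying the non-branching matrix $B_{q+1}^{\LL,\KK_i}$ by diagonal matrices does not change its sparsity pattern (cf. the discussion after \cref{thm:fastcomputation} and in \cref{sec:eigenvalues}). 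Next, for each index $i$ I would run $k$ steps of the GKL algorithm on $M_i$, producing a $k\times k$ bidiagonal matrix whose singular values approximate $k$ singular values of $M_i$; squaring them yields approximations to $k$ eigenvalues of $\Delta_{q,\mathrm{up}}^{\KK_i,\LL}=M_iM_i^T$, by the similarity-and-factorization identity recorded at the start of \cref{sec:eigenvalues}. By \cref{prop:GKL_speed} together with the $O(k^2)$ bidiagonal eigenvalue solve, this step costs $O\!\left(k(n_{q+1}^\LL+n_q^{\KK_i})+k^2\right)$ for each $i$.

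To finish the first bound I would bound $n_q^{\KK_i}\le n_q^{\KK_m}\le n_q^\LL$ and $n_{q+1}^{\KK_i}\le n_{q+1}^\LL$, note that $m+1\le n_q^{\KK_m}+1=O(n_q^\LL)$ because each inclusion adds exactly one $q$-simplex, and absorb $\alpha(n_{q+1}^\LL)=O(n_{q+1}^\LL+n_q^\LL)$. Then the construction cost from \cref{cor:filt} is $O\!\left(n_q^\LL(n_{q+1}^\LL+n_q^\LL)\right)$, the total GKL cost is $O\!\left((m+1)\,(k(n_{q+1}^\LL+n_q^\LL)+k^2)\right)$, and summing gives $O\!\left(n_q^\LL\big((k+1)(n_{q+1}^\LL+n_q^\LL)+k^2\big)\right)$, where the ``$+1$'' in $k+1$ absorbs the construction term.

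For the ``in particular'' clause I would first observe that any subcomplex of a $q$-non-branching complex is again $q$-non-branching (a $q$-simplex of $\KK_j$ can be a face only of $(q+1)$-simplices lying in $\KK_j\subseteq\LL$), so each $\KK_j$ is $q$-non-branching. Hence the first part applies verbatim to the truncated filtration $\KK_0\hookrightarrow\cdots\hookrightarrow\KK_j$ with $\KK_j$ playing the role of $\LL$, computing $k$-eigenvalue approximations for all $\{\Delta_{q,\mathrm{up}}^{\KK_i,\KK_j}\}_{i\le j}$ in $O\!\left(n_q^{\KK_j}\big((k+1)(n_{q+1}^{\KK_j}+n_q^{\KK_j})+k^2\big)\right)\le O\!\left(n_q^\LL\big((k+1)(n_{q+1}^\LL+n_q^\LL)+k^2\big)\right)$ operations; summing over the $m+1=O(n_q^\LL)$ choices of $j$ yields the stated $O\!\left((n_q^\LL)^2\big((k+1)(n_{q+1}^\LL+n_q^\LL)+k^2\big)\right)$ bound. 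The only real obstacle is the first step's amortization accounting; the rest is routine estimation, and the subcomplex inheritance of the non-branching property is the sole extra observation needed.
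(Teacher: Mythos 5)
Your proposal is correct and follows essentially the same route as the paper: amortize the construction of the rescaled boundary matrices via \cref{cor:filt}, apply the GKL cost bound from \cref{prop:gkl} to each of the $O(n_q^\LL)$ matrices, and obtain the second bound by repeating this for each truncated filtration (equivalently, multiplying by the filtration length $\le n_q^\LL$). Your explicit remark that subcomplexes of a $q$-non-branching complex are again $q$-non-branching is a small justification the paper leaves implicit, but it does not change the argument.
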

\begin{proof}
The result is obtained by computing all the matrices following  \cref{prop:gkl} and then applying \cref{cor:filt} to each matrix individually using the worst case where $n_q^\LL-n_q^{\KK_m}+m = n_q^\LL$ in the first expression and $(m+1)(n_{q+1}^\LL + n_q^{\KK_m}) = n_{q}^\LL(n_{q+1}^\LL + n_q^\LL)$ in the second expression, and $m+1=n_{q}^\LL$. This thus gives a computational time of
\[
O\Big(n_q^\LL\times \left(\alpha(n_{q+1}^\LL) + n_{q+1}^\LL + n_q^\LL\right)\Big).
\]
There are $O(n_q^\LL)$ matrices, and so the cost of computing the $k$ eigenvalues from \cref{prop:gkl} becomes 
\[
O\Big(n_q^\LL\times\left( \alpha(n_{q+1}^\LL)+n_{q+1}^\LL+n_q^\LL+k\times (n_q^\LL+n_{q+1}^\LL)+k^2\right)\Big).
\]
%\[O\left(k^2n_q^\LL \times (n_q^\LL + n_{q+1}^\LL) + (n_q^\LL)^2\times \alpha(n_{q+1}^\LL)\right).\]
Adding these worst-case complexities together gives the desired result. The final bound is obtained by multiplication with the length of the filtration, which is at most $n_q^\LL$.
\end{proof}

It is well known~\cite[Section 2]{horak2013spectra} that the non-zero eigenvalues of the persistent Laplacian \[\Delta_{q}^{\mathcal{K}, \mathcal{L}} = \Delta_{q, \mathrm{up}}^{\mathcal{K}, \mathcal{L}} + \Delta_{q, \mathrm{down}}^{\mathcal{K}}\]
are given by the union (counted with multiplicity) of the eigenvalues of the up Laplacian and the down Laplacians. This is immediate from 
\[\Delta_{q, \mathrm{up}}^{\mathcal{K}, \mathcal{L}}\circ \Delta_{q, \mathrm{down}}^{\mathcal{K}} = \Delta_{q, \mathrm{down}}^{\mathcal{K}}\circ \Delta_{q, \mathrm{up}}^{\mathcal{K}, \mathcal{L}}=0.\]
Since the multiplicity of the $0$ eigenvalue in $\Delta_{q}^{\mathcal{K}, \mathcal{L}}$ equals the $q$-th persistent Betti number of the inclusion $\KK\hookrightarrow \LL$, information already captured by persistent homology, the non-zero spectrum is likely of more interest to the practitioner.

The matrix representation of the down Laplacian can be written as $(W_q^\KK)^{-1}\left(B^{\KK}_{q}\right)^T W_{q-1}^{\KK} B^{\KK}_{q}$. Following the same symmetrization procedure as above, its eigenvalues can be computed from the singular values of a  matrix derived from $B^{ \KK}_{q}$.

The boundary matrix $B^{\KK}_{q}$ is sparse, with exactly $(q+1)$ non-zero entries per column corresponding to the faces of each $q$-simplex.
%Therefore, 
%$\mathrm{nnz}(B^{\LL, \KK}_{q}) = (q+1)n_q^\mathcal{K}$.
Using the GKL algorithm we can approximate $k$ eigenvalues of the down Laplacian in 
\[
O\left(k\times (n_q^\KK\times (q+1)+n_{q-1}^\KK)+k^2\right)
\]
arithmetic operations for a fixed complex $\mathcal{K}$. The total time for all simplicial complexes in a full filtration from $\emptyset$ to $\mathcal{L}$ is bounded by
\[
O\Big(n_q^\LL\times\left(k\times (n_q^\KK\times (q+1)+n_{q-1}^\KK)+k^2\right)\Big).
\]
This can potentially offer a significant improvement over methods requiring explicit matrix formation and dense eigensolvers, making the computation of a few eigenvalues a practical addition to persistent homology. We have included some computations in \cref{fig:image_eig_sing} for the $k$ smallest eigenvalues and for the $k$ largest eigenvalues. It is clear that this SVD-based approach is substantially more efficient than methods that first form the dense Laplacian matrix product.

\begin{remark}
\label{rem:hodge-efficient}
An insightful method for visualizing topological evolution throughout a filtration involves plotting the barcode alongside a curve that tracks the smallest non-zero eigenvalue of the standard (non-persistent) Hodge Laplacian at each stage (see, e.g., \cite[Figure 5]{wei2023}). Since the Hodge Laplacian is the sum of up and down Laplacians---whose top eigenvalues can be efficiently computed using the sparse matrix techniques as done for the complex $\KK$ above---this visualization approach becomes practically generalizable to arbitrary (not necessarily non-branching) filtrations for large eigenvalues. This extension is a promising direction for future work.
\end{remark}

\section{Cubical complexes and X-Ray images}\label{subsec:cubical}

Cubical complexes provide an efficient framework for representing image data in TDA. 
The persistent Laplacian with respect to cubical complexes has been successfully applied in image analysis \cite{davies23c}, 
as pixels (2D images) and voxels (3D images) are naturally represented using $2$-cubes (squares) and $3$-cubes (cubes), respectively. 
Cubical complexes are automatically non-branching and have a natural oriented matrix representation of the boundary operator, 
making it unnecessary to check matrix orientability.
In this section, we first present a simple example to demonstrate the computation of the persistent Laplacian in the context of cubical complexes (\cref{ex_cub_1}),
then we experiment with this framework on an X-Ray image.
We refer to \cite{MR3025945} and \cref{appdx_cub_cplx} for more details on cubical complexes.
\subsection{A worked-out example}

\begin{figure}[H]
  \centering
  \begin{subfigure}[b]{0.3\textwidth}
\begin{tikzpicture}
    \fill[red!50] (0,1) rectangle (1,2);
        \fill[red!50] (1,1) rectangle (2,2);
        \fill[red!50] (0,0) rectangle (1,1);
        \fill[red!50] (1,0) rectangle (2,1);
    % Draw the grid
    \draw[step=1cm, gray, very thin] (0,0) grid (2,2);
    
    % Draw the vertices
    \foreach \x in {0,1,2} {
        \foreach \y in {0,1,2} {
            \node[fill=black, circle, inner sep=1pt] at (\x,\y) {};
        }
    }
    % Label the vertices
    \node[anchor=south east] at (0,0) {7};
    \node[anchor=south west] at (1.65,0) {9};
    \node[anchor=north east] at (0.05,2.5) {1};
    \node[anchor=north west] at (1.65,2.5) {3};
    \node[anchor=north] at (0.85,2.5) {2};
    \node[anchor=south] at (0.85,0) {8};
    \node[anchor=west] at (1.65,1.3) {6};
    \node[anchor=east] at (0.05,1.3) {4};
    \node[anchor=center] at (0.85,1.3) {5};
\end{tikzpicture}
%%%  \end{adjustbox}    
\caption{$\LL$.}%
\end{subfigure}%
   \quad
   \begin{subfigure}[b]{0.3\textwidth}
   \begin{tikzpicture}
        % Draw and fill the grid squares, except the bottom right one
        % \fill[red!50] (0,1) rectangle (1,2);
        % \fill[red!50] (1,1) rectangle (2,2);
        % \fill[yellow!50] (0,0) rectangle (1,1);
        % \fill[blue!50] (1,0) rectangle(2,1);
        % Bottom right square (1,0) to (2,1) is left empty

        % Draw the grid lines, excluding the edges HI and EI
        % Left vertical grid lines
        \draw[gray, very thin] (0,0) -- (0,2);
        \draw[gray, very thin] (1,0) -- (1,1);
        
        % Right vertical grid line, excluding edge EI
        \draw[gray, very thin] (2,1) -- (2,2);
        
        % Bottom horizontal grid lines
        \draw[gray, very thin] (0,0) -- (1,0);
        \draw[gray, very thin] (1,1) -- (2,1);
        \draw[gray, very thin] (0,1) -- (1,1);
        % \draw[gray, very thin] (1,1) -- (1,2);
        \draw[gray, very thin] (1,2) -- (2,2);

        \draw[gray, very thin] (1,0) -- (2,0);
        \draw[gray, very thin] (2,0) -- (2,1);
        
        % Top horizontal grid lines, excluding edge HI
        \draw[gray, very thin] (0,2) -- (1,2);
        
        % Draw the vertices, excluding the bottom right vertex (2,0)
        \foreach \x in {0,1,2} {
            \foreach \y in {0,1,2} {
                \ifnum\x=2\relax
                    \ifnum\y=0\relax
                        % Skip the vertex (2,0)
                    \else
                        \node[fill=black, circle, inner sep=1pt] at (\x,\y) {};
                    \fi
                \else
                    \node[fill=black, circle, inner sep=1pt] at (\x,\y) {};
                \fi
            }
        }
        
       % Label the vertices
        \node[anchor=south east] at (0,0) {7};
    % \node[anchor=south west] at (1.65,0) {9};
    \node[anchor=north east] at (0.05,2.5) {1};
    \node[anchor=north west] at (1.65,2.5) {3};
    \node[anchor=north] at (0.85,2.5) {2};
    \node[anchor=south] at (0.85,0) {8};
    \node[anchor=west] at (1.65,1.3) {6};
    \node[anchor=east] at (0.05,1.3) {4};
    \node[anchor=center] at (0.85,1.3) {5};
    \node[anchor=south west] at (1.65,0) {9};
    \node[fill=black, circle, inner sep=1pt] at (2,0) {};
    \end{tikzpicture}
    \caption{$\KK$.}
\end{subfigure}%

\caption{The cubical complexes $\KK \hookrightarrow \LL$ associated with Example \ref{ex_cub_1}.}
\label{fig_cubical_cmplexes}
\end{figure}

\begin{example}\label{ex_cub_1}
Let $\KK\hookrightarrow\LL$ be the pair of cubical complexes in \cref{fig_cubical_cmplexes}. The boundary operator $\partial_2^{\LL}$ and the submatrix $D_2^{\LL}$ can be expressed as
\begin{equation*}
B_2^{\LL}=
\begin{bNiceMatrix}[
  first-row,code-for-first-row=\scriptstyle,
  first-col,code-for-first-col=\scriptstyle,
]
& [4512] & [5623] & [7845] & [8956] \\
[12] & -1 & 0 & 0 & 0 \\
[23] & 0 & -1 & 0 & 0 \\
[41] & -1 & 0 & 0 & 0 \\
[52] & 1 & -1 & 0 & 0 \\
[63] & 0 & 1 & 0 & 0 \\
[45] & 1 & 0 & -1 & 0 \\
[56] & 0 & 1 & 0 & -1 \\
[74] & 0 & 0 & -1 & 0 \\
[85] & 0 & 0 & 1 & -1 \\
[96] & 0 & 0 & 0 & 1 \\
[78] & 0 & 0 & 1 & 0 \\
[89] & 0 & 0 & 0 & 1 \\
\end{bNiceMatrix},
\quad \quad
D_2^{\LL}
=
\begin{bNiceMatrix}[
  first-row,code-for-first-row=\scriptstyle,
  first-col,code-for-first-col=\scriptstyle,
]
& [4512] & [5623] & [7845] & [8956] \\
[52] & 1 & -1 & 0 & 0 \\
\end{bNiceMatrix}
\end{equation*}
respectively.
Thus according to \cref{prop:pers_lap},
\[
B_2^{\LL, \KK}
=
\begin{bNiceMatrix}[
  first-row,code-for-first-row=\scriptstyle,
  first-col,code-for-first-col=\scriptstyle,
]
 &  & &  \\
[12]  & -1 & 0 & 0 \\
[23]  & -1 & 0 & 0 \\
[41]  & -1 & 0 & 0 \\
[63]  & 1 & 0 & 0 \\
[45]  & 1 & -1 & 0 \\
[56]  & 1 & 0 & -1 \\
[74]  & 0 & -1 & 0 \\
[85]  & 0 & 1 & -1 \\
[96]  & 0 & 0 & 1 \\
[78]  & 0 & 1 & 0 \\
[89]  & 0 & 0 & 1 \\
\end{bNiceMatrix}.
\]
Since the cardinality of $C_1, C_2, C_3$ are $2$, $1$ and $1$ respectively,
we get from \cref{thm:fastcomputation} that the diagonal matrix $S$ (as in \cref{prop:cheeger}) is
\[
S
=
\begin{bmatrix}
2 & 0 & 0\\
0 & 1 & 0\\
0 & 0 & 1\\
\end{bmatrix},
\]
and the up persistent Laplacian equals
\[
\Delta_{1, \mathrm{up}}^{\KK, \LL}
=
B_2^{\LL, \KK} S^{-1} \left(B_2^{\LL, \KK}\right)^T.
\]
\end{example}

\begin{figure}[H]
\centering
\begin{tikzpicture}
    \node (v12) at (1.5,3.5) {};
    \node (v23) at (3.5, 3.5) {};
    \node (v41) at (0, 3) {};
    \node (v63) at (5,3) {};
    \node (v45) at (1.5,2.5) {};
    \node (v56) at (3.5,2.5) {};
    \node (v74) at (0,1.5) {};
    \node (v85) at (2.5,1.5) {};
    \node (v96) at (5,1.5) {};
    \node (v78) at (1.5,0.5) {};
    \node (v89) at (3.5,0.5) {};

    \begin{scope}[fill opacity=0.7]
     \filldraw[fill=yellow!40] (2.5, 3.1) ellipse [x radius=3.4, y radius=1.1];
    \filldraw[fill=red!40] (1.3, 1.5) ellipse [rotate=0, x radius=1.5, y radius=1.4];
     \filldraw[fill=blue!40] (3.7, 1.5) ellipse [rotate=0, x radius=1.5, y radius=1.4];
    \end{scope}
    
    \fill (v12) circle (0.1) node [above] {$[12]-$};
    \fill (v23) circle (0.1) node [above] {$[23]-$};
    \fill (v41) circle (0.1) node [above] {$[41]-$};
    \fill (v63) circle (0.1) node [above] {$[63]+$};
    \fill (v45) circle (0.1) node [above=0.2cm] {$[45]+$};
    \fill (v45) circle (0.1) node [below] {$[45]-$};
    \fill (v56) circle (0.1) node [above=0.2cm] {$[56]+$}; %below left
    \fill (v56) circle (0.1) node [below] {$[56]-$};
    \fill (v74) circle (0.1) node [below right] {$[74]-$};
    %\fill (v74) circle (0.1) node [right] {$-$};
    \fill (v85) circle (0.1) node [left=0.1cm] {$[85]+$};
    \fill (v85) circle (0.1) node [right=0.1cm] {$[85]-$};
    %\fill (v85) circle (0.1) node [left=0.2cm] {$+$};
    \fill (v96) circle (0.1) node [left] {$[96]+$};
    \fill (v78) circle (0.1) node [above] {$[78]+$};
    \fill (v89) circle (0.1) node [above] {$[89]+$};
    % \node at (0.2,2.8) {$e_1$};
    % \node at (2.3,3) {$e_2$};
    % \node at (3,0.8) {$e_3$};
    % \node at (0.1,0.7) {$e_4$};
\end{tikzpicture}
\caption{\footnotesize{The oriented hypergraph $\widetilde{\KK}$,
which contains three hyperedges $e_1, e_2, e_3$,
with $e_1=\{[12]-, [23]-, [41]-, [63]+, [45]+, [56]+\}$,
$e_2=\{[45]-, [74]-, [85]+, [78]+\}$,
and
$e_3=\{[56]-, [85]-, [96]+, [89]+\}$.
}}
\label{fig_hyp_grph_1}
\end{figure}
We also obtain that $\Delta_{1, \mathrm{up}}^{\KK, \LL}$ is equivalent to the Laplacian $\Delta_{0}^{\widetilde{\KK}}$ of the weighted hypergraph $\widetilde{\KK}_3$ given in \cref{fig_hyp_grph_1}. The weight of the yellow hyperedge on top is $1/2$, and the weights of the red hyperedge (bottom left) and the blue hyperedge (bottom right) are both equal to $1$.

\subsection{Experiments on a chest X-Ray image}
We conduct experiments on a $224\times 224$ chest X-ray image randomly selected from the ChestMNIST dataset \cite{medmnistv1, medmnistv2}. To analyze the performance of our algorithm across different scales, we generate a multi-resolution set of images by iteratively applying a $2\times 2$ max-pooling operation to the original image (see \cref{fig:image_x_ray}, top row).

From each of these four images, we derive two cubical complexes by thresholding pixel intensities. First, we construct the complexes $\KK^1, \KK^2, \KK^3, \KK^4$ from pixels with values less than $50$ (\cref{fig:image_x_ray}, middle row). Second, we construct a set of larger complexes, $\LL^1, \LL^2, \LL^3, \LL^4$, using a higher threshold of $150$ (\cref{fig:image_x_ray}, bottom row). This process yields four pairs of nested complexes $\KK^j\hookrightarrow \LL^j$ for $j=1,2,3,4$. For each pair, we define a cube-wise filtration $\KK^j=\KK_0^j\hookrightarrow \KK_1^j\hookrightarrow \cdots \hookrightarrow \KK^j_m=\LL^j$, where each step $\KK^j_{i} \hookrightarrow \KK^j_{i+1}$ corresponds to the addition of a single $1$-cube. The total number of added $1$-cubes, $m$, for the four filtrations are $47211$, $11843$, $2954$, and $702$, respectively.

\begin{figure}[ht]
    \centering
    \setlength{\tabcolsep}{5.5pt} % Adjust horizontal spacing
    \begin{tabular}{cccc}
        \includegraphics[width=0.22\textwidth]{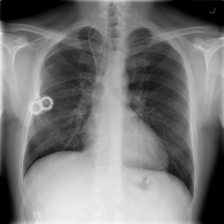} &
        \includegraphics[width=0.22\textwidth]{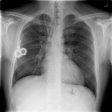} &
        \includegraphics[width=0.22\textwidth]{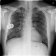} &
        \includegraphics[width=0.22\textwidth]{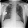} \\
        \multicolumn{4}{c}{\footnotesize{(a) Original image and three downscaled versions via max-pooling}.} \\[6pt]

        \includegraphics[width=0.22\textwidth]{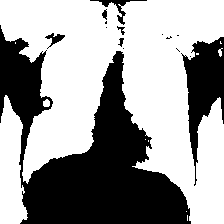} &
        \includegraphics[width=0.22\textwidth]{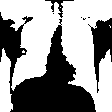} &
        \includegraphics[width=0.22\textwidth]{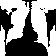} &
        \includegraphics[width=0.22\textwidth]{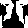} \\
        \multicolumn{4}{c}{\footnotesize{(b) Images from pixels with values $< 50$ (defining complexes $\KK^j$)}.} \\[6pt]
        
        \includegraphics[width=0.22\textwidth]{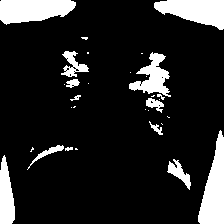} &
        \includegraphics[width=0.22\textwidth]{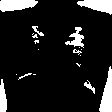} &
        \includegraphics[width=0.22\textwidth]{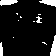} &
        \includegraphics[width=0.22\textwidth]{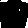} \\
        \multicolumn{4}{c}{\footnotesize{(c) Images from pixels with values $< 150$ (defining complexes $\LL^j$)}.} \\[6pt]
    \end{tabular}
    \caption{The image data processing pipeline. (a) The original $224 \times 224$ X-ray image and three downscaled versions. (b) Binary images used to define the initial cubical complexes $\KK^j$. (c) Binary images used to define the larger terminal complexes $\LL^j$.}
    \label{fig:image_x_ray}
\end{figure}

\subsubsection{The speed of computing $B_1^{\LL^j, \KK_n^j}$ and $\Delta_{1, \text{up}}^{\KK^j_n, \LL^j}$}\label{subsec:compute_speed}
We first evaluate\footnote{{\color{blue}The code is available on \hyperlink{https://github.com/ruidongsmile/UpPersLapCubicalComplex}{https://github.com/ruidongsmile/UpPersLapCubicalComplex}.}}
the performance of our DSU based method from \cref{sec:filtration} against the Schur complement approach of \cite[Section 5]{pers_lap} for computing the up persistent Laplacian, $\Delta_{1, \text{up}}^{\KK^j_n, \LL^j}$, along each filtration. We also time the computation of the constituent matrices $\left(B_1^{\LL^j, \KK_n^j}, W_1^{\LL^j, \KK^j_n}\right)$ and their product $B_1^{\LL^j, \KK_n^j}\big(W_1^{\LL^j, \KK^j_n}\big)^{1/2}$. The results are summarized in \cref{fig:image_plots}. For larger complexes (e.g., $\KK^1 \hookrightarrow \LL^1$), our DSU-based method is notably faster. Conversely, for smaller complexes, the Schur complement method exhibits better performance. In all cases, the direct computation of constituent matrices and their product is significantly faster than computing the full persistent Laplacian.

\begin{figure}[ht]
    \centering
    \setlength{\tabcolsep}{0.1pt}
    \begin{tabular}{cc}
         \subcaptionbox{The filtration $\KK^1_0\hookrightarrow \cdots \hookrightarrow\LL^1$.\label{fig:img1}}{\includegraphics[width=0.5\textwidth]{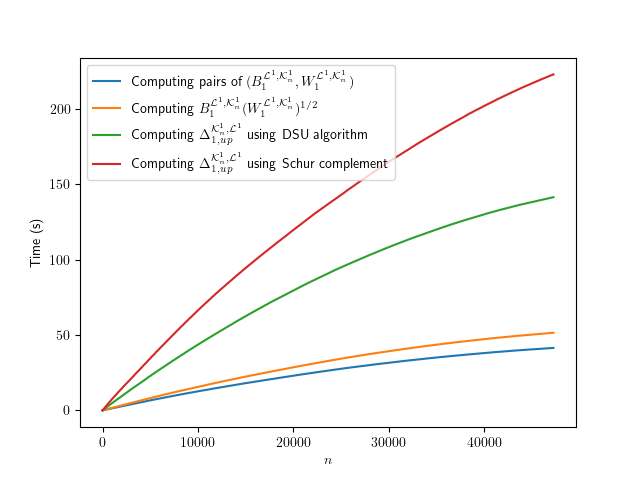}} &
         \subcaptionbox{The filtration $\KK^2_0\hookrightarrow \cdots \hookrightarrow\LL^2$.\label{fig:img2}}{\includegraphics[width=0.5\textwidth]{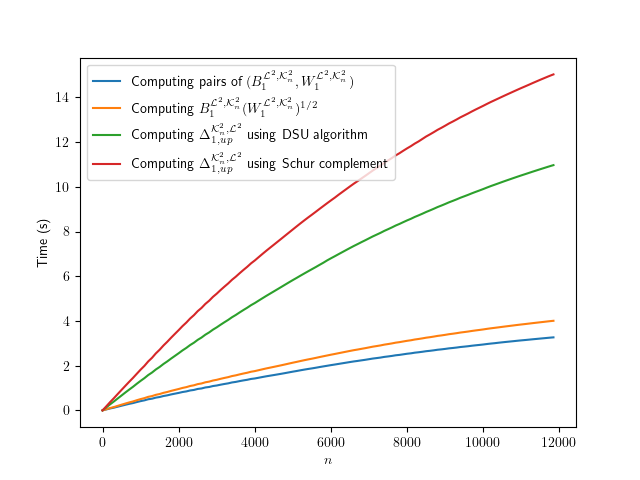}} \\
         \subcaptionbox{The filtration $\KK^3_0\hookrightarrow \cdots \hookrightarrow\LL^3$.\label{fig:img3}}{\includegraphics[width=0.5\textwidth]{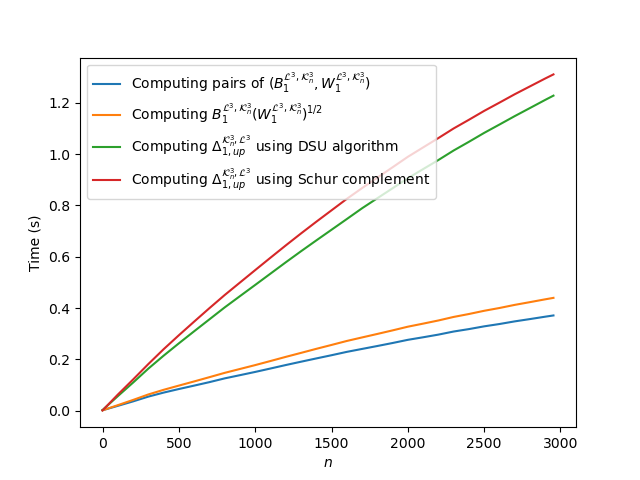}} &
         \subcaptionbox{The filtration $\KK^4_0\hookrightarrow \cdots \hookrightarrow\LL^4$.\label{fig:img4}}{\includegraphics[width=0.5\textwidth]{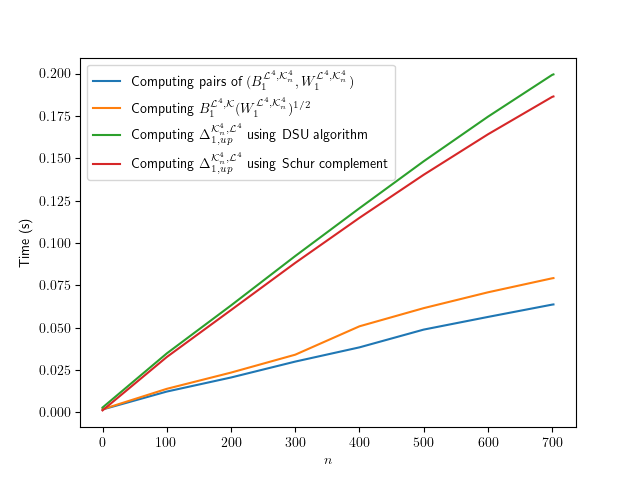}}
    \end{tabular}
    \caption{Computation time analysis for the four filtrations. Each plot compares the time taken by our DSU-based method, the Schur complement method to compute the up persistent Laplacians, and the computation of intermediate matrices. Time is plotted against the number of $1$-cubes added to the filtration. }
    \label{fig:image_plots}
\end{figure}

\begin{figure}[ht]
    \centering
    \setlength{\tabcolsep}{0.1pt}
    \begin{tabular}{cc}
         \subcaptionbox{The filtration $\KK^1_0\hookrightarrow \cdots \hookrightarrow\LL^1$.\label{fig:img5}}{\includegraphics[width=0.5\textwidth]{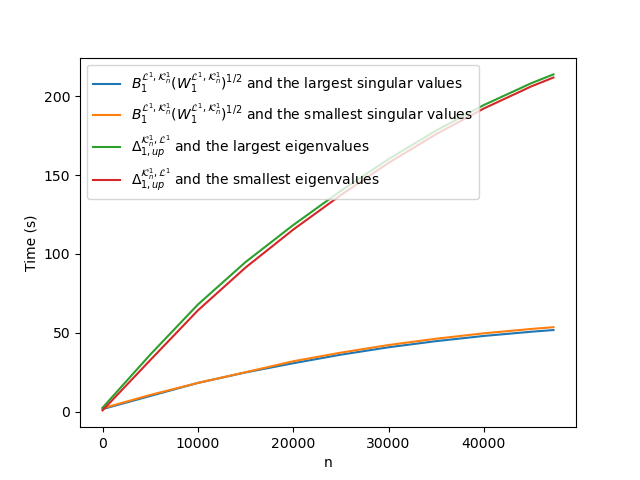}} &
         \subcaptionbox{The filtration $\KK^2_0\hookrightarrow \cdots \hookrightarrow\LL^2$.\label{fig:img6}}{\includegraphics[width=0.5\textwidth]{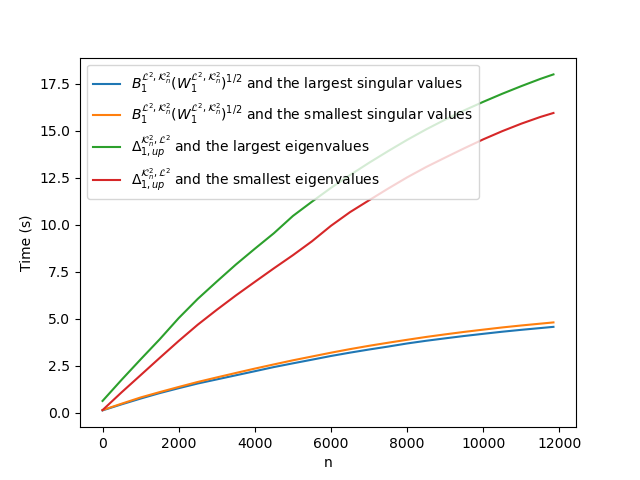}} \\
         \subcaptionbox{The filtration $\KK^3_0\hookrightarrow \cdots \hookrightarrow\LL^3$.\label{fig:img7}}{\includegraphics[width=0.5\textwidth]{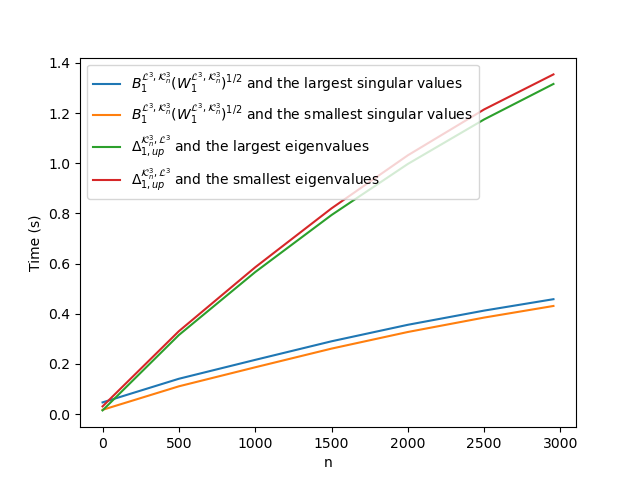}} &
         \subcaptionbox{The filtration $\KK^4_0\hookrightarrow \cdots \hookrightarrow\LL^4$.\label{fig:img8}}{\includegraphics[width=0.5\textwidth]{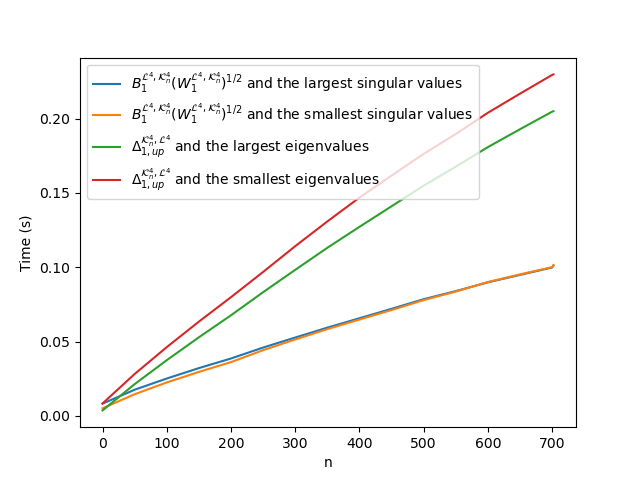}}
    \end{tabular}
    \caption{Comparison of computation times for largest/smallest eigenvalues and singular values as described in \cref{subsec:compute_speed}.}
    \label{fig:image_eig_sing}
    \end{figure}

Next, we compare the computational cost of spectral decomposition; see \cref{sec:eigenvalues}.

For each filtration 
\[\KK_0^j\hookrightarrow \cdots \hookrightarrow \LL^j, \qquad j=1, 2, 3, 4,\]
we first compute the up persistent Laplacian $\Delta_{1, \text{up}}^{\KK^j_n, \LL^j}$ using the Schur complement method, 
together with the largest and smallest non-zero eigenvalue. Then, we compute the matrix $B_1^{\LL^j, \KK_n^j}\big(W_1^{\LL^j, \KK^j_n}\big)^{1/2}$ and its largest and smallest singular value. We compute these values at every $5000$-th $(j=1)$, $500$-th $(j=2)$, $500$-th $(j=3)$ and $50$-th $(j=4)$ step of the filtration. As shown in \cref{fig:image_eig_sing},
calculating the singular values is substantially faster than calculating the eigenvalues of the corresponding Laplacian\footnote{Computations were performed using the \texttt{sparse.linalg.eigsh} and \texttt{sparse.linalg.svds} functions in SciPy.}.

\section{Discussion and further work}
\label{sec:discussion}
The persistent Laplacian has already demonstrated superior performance compared to persistent homology on image data \cite{davies23c}. We believe that the contributions of this paper makes the persistent Laplacian a practical tool for image data, e.g., by including the top $k$ eigenvalues.  Exploring the discriminatory power of such enhanced descriptors is a key direction for our future work (see also \cref{rem:hodge-efficient}). Another related and hitherto open problem is how to efficiently work with such a large number of eigenvalues in machine learning applications.

On the theoretical side, this paper introduces a Cheeger-type inequality for the persistent Laplacian. While the minimal non-zero eigenvalue, $\lambda_{\mathrm{min}}^{\KK, \LL}$, of the up persistent Laplacian is known to capture geometric information, its stability under small perturbations of the filtration remains an important open question that warrants further investigation.

\FloatBarrier
\bibliography{sample}

\appendix

\section{Graphs and incidence matrices}
In this section,
we describe the weighted graphs used in \cref{subsec:singular_mtx}.

\begin{definition}
A directed (resp. undirected) graph $G=(V, E)$ is a discrete structure where the vertex set $V$ is a finite set and the edge set $E$ is a set whose each element is an (resp. ordered) unordered pair of distinct vertices in V.
\end{definition}

\begin{definition}
We say $G=(V, E)$ is an undirected graph with loops if each element $e\in E$ is either an unordered pair of distinct vertices or a single vertex in $V$.
\end{definition}

\begin{definition}\label{def:incd_mtx_undir}
The incidence matrix $I(G)$ of an undirected graph (with loops) $G=(V, E)$ is a $\{0, 1\}$-matrix whose rows and columns are indexed by the vertices and edges of $G$ respectively,
such that the $(v, e)$-entry of $G$ is equal to $1$ if $v\in e$, 
otherwise it is equal to $0$.
\end{definition}

\begin{remark}
Usually the incidence matrix with respect to an undirected graph $G$ with loops is a $\{0, 1, 2\}$-matrix,
i.e., 
each loop is indexed by $2$ in $I(G)$,
this is different from \cref{def:incd_mtx_undir}.
\end{remark}

%Let $G=(V, E)$ be a directed graph.
%The incidence matrix with respect to $G$ is a $\{0, \pm 1\}$-matrix,
%where the head and tail of an edge $e$ is indexed by $1$ and $-1$ respectively.
We will then give the formal definition of the incidence matrix with respect to a directed graph $G=(V, E)$.
We refer to an ordered pair of adjacent vertices as an \emph{arc} of $G$.
An orientation of $G$ is defined as a function $\sigma$ from the arcs of $G$ to $\{-1, 1\}$ such that if $(u, v)$ is an arc,
then $\sigma(u, v)=-\sigma(v, u)$.
We regard the edge $[uv]$ as directed from tail $u$ to head $v$ if $\sigma(u, v)=1$.
A directed graph $G$ is associated with a selected orientation $\sigma$.

\begin{definition}
The incidence matrix $I(G)$ of a directed graph $G=(V, E)$ is the $\{0, \pm 1\}$-matrix with rows and columns indexed by vertices and edges of $G$ respectively such that the $(u, e)$-entry is equal to $1$ if $u$ is the head of $e$,
 $-1$ if $u$ is the tail of $e$,
 and $0$ otherwise.
\end{definition}

\section{Pseudocodes associated with Union-Find (DSU) algorithm}\label{appdx_codes}

\begin{algorithm}[H]
\caption{\textsc{DSU}}
\label{alg:dsu}
\begin{algorithmic}[1]
\State \textbf{Input:} $|D|$ 
\Comment{A $k\times l$ non-branching matrix}
\State \textbf{Output:} parent

%\Statex
\State $\text{parent} \gets [0, 1, \cdots, l]$ 
\State $\text{rank} \gets [1, 1, \cdots, 1]$ \Comment{$l$ copies of $1$s.}
\State $\text{edge}\gets 1$
\While{$\text{edge} \leq k$}
\State $\text{nodes} \gets \text{column indices of nonzero entries in } D(\text{edge, :})$
\If{$\text{len(nodes)} = 2$}
\State $x, y \gets \text{nodes}$
\State $\textsc{Union}(x, y)$
\EndIf  
\State $\text{edge} \gets \text{edge}+1$
\EndWhile

%\Statex

\State $\text{node}\gets 1$
\While{$\text{node} \leq l$}
\State $\text{root}\gets \textsc{Find(node)}$
\If{$\text{parent}[\text{node}] \neq \text{root}$}
\State $\text{parent}[\text{node}]\gets \text{root}$
\EndIf
\State $\text{node}\gets \text{node} + 1$
\EndWhile

\end{algorithmic}
\end{algorithm}

\begin{algorithm}[H]
\caption{\textsc{Union}}
\label{alg:union}
\begin{algorithmic}[1]
\State \textbf{Input:} $x$, $y$
\Comment{$x$, $y$ are two column indices of $|D|$}

\State $\text{root}_x \gets \textsc{Find}(x)$
    \State $\text{root}_y \gets \textsc{Find}(y)$
    \If{$\text{root}_x \neq \text{root}_y$}
        \If{$\text{rank}[\text{root}_x] > \text{rank}[\text{root}_y]$}
            \State $\text{parent}[\text{root}_y] \gets \text{root}_x$
        \Else
            \State $\text{parent}[\text{root}_x] \gets \text{root}_y$
            \If{$\text{rank}[\text{root}_x] = \text{rank}[\text{root}_y]$}
                \State $\text{rank}[\text{root}_y] \gets \text{rank}[\text{root}_y] + 1$
            \EndIf
        \EndIf
    \EndIf

\end{algorithmic}
\end{algorithm}

\begin{algorithm}[H]
\caption{\textsc{Find}}
\label{alg:find}
\begin{algorithmic}[1]
\State \textbf{Input:} $x$ 
\State \textbf{Output:} root of $x$

    \While{$\text{parent}[x] \neq x$}
        \State $\text{parent}[x] \gets \text{parent}[\text{parent}[x]]$ \Comment{Path compression}
        \State $x \gets \text{parent}[x]$
    \EndWhile
    \State \Return $x$

\end{algorithmic}
\end{algorithm}

\section{Oriented hypergraphs}\label{appdx_hg}
In this section, we recall the basics of hypergraphs, mostly following the exposition in \cite{MR4236395}.

\begin{definition}[\cite{MR4236395}]
An \emph{oriented hypergraph} is a pair $H=(V, E)$ such that $V$ is a finite set of vertices and $E$ is a set such that each edge $e\in E$ is a pair of disjoint elements $(e_{\mathrm{in}}, e_{\mathrm{out}})$ (input and output) in the power set $\mathcal{P}(V)$.
\end{definition}

\begin{remark}
For an edge $e=(e_{\mathrm{in}}, e_{\mathrm{out}})$,
the set $e_{\mathrm{in}}$ or $e_{\mathrm{out}}$ can be empty.
\end{remark}

\begin{definition}[\cite{MR4236395}]
An oriented hypergraph $H$ is \emph{connected} if for each pair of vertices $u, v\in V$,
there exist $v_1, \cdots, v_k\in V$ and $e_1, \cdots e_{k-1}\in E$ such that $v_1=u, v_k=v$,
and $\{v_i, v_{i+1}\subset e_i\}$ for each $1\leq i\leq k-1$.
In other words,
there exists a path which connects $u$ and $v$.
\end{definition}

\begin{definition}[\cite{MR4236395}]
We say an oriented hypergraph $H=(V, E)$ has $r$ connected components if there exists $H_j=(V_j, E_j)$ for $1\leq j\leq r$ such that:
\begin{enumerate}
    \item $H_j$ is a connected hypergraph for each $1\leq j\leq r$.
    \item For $1\leq i, j\leq r$ if $i\neq j$, 
    then $V_i\cap V_j=\emptyset$ and $E_i\cap E_j=\emptyset$.
    \item $\bigcup V_j = V$,
    and $\bigcup E_j=E$.
\end{enumerate}
\end{definition}

\begin{definition}[\cite{MR4236395}]
 If the cardinality of $V$ is $n$,
 the cardinality of $E$ is $m$,
then the \emph{incidence matrix} of $H=(V, E)$ is $\mathcal{I}\coloneqq (\mathcal{I}_{ve})_{i\in V, e\in E}$ with 
 \[
 \mathcal{I}_{ve}
 \coloneqq
 \begin{cases}
1 & \textrm{if } v\in e_{\mathrm{in}},\\
-1 & \textrm{if } v\in e_{\mathrm{out}},\\
0 & \textrm{else}.
 \end{cases}
 \]
\end{definition}

The following definition is novel to this paper, and introduces the concept of an \emph{weighted} oriented hypergraph. 

\begin{definition}
Let $W_0^v$ be a positive weight over the vertex $v\in V$,
and $W_1^e$ be a positive weight over the hyperedge $e\in E$.
We define a weighted oriented hypergraph as a 
quadruple $(V, E, W_0, W_1)$,
where $W_0=\{W_0^v: v\in V\}$,
$W_1=\{W_1^e: e\in E\}$.
\end{definition}

Inspired by the Laplacian for oriented hypergraphs given in \cite{MR4236395}, we arrive at the following definition of a Laplacian over a weighted oriented hypergraph $(V, E, W_0, W_1)$.

\begin{definition}
\label{def:laphyper}
Let $(V, E, W_0, W_1)$ be a weighted oriented hypergraph.
The Laplacian $L_0$ over $V$ is 

\[
\begin{aligned}
L_0([v])
\coloneqq
&\sum_{e_{\textrm{in}:} v \textrm{ input}} W^e_1 \left(W_0^{v'} \sum_{v' \textrm{:input of} e_{\mathrm{in}}} [v']  
-
W_{0}^{w'}\sum_{w'\textrm{:output of } e_{\textrm{in}}}[w']
\right)\\
&-
\sum_{e_{\mathrm{out}:} v\, \mathrm{output}} W_{1}^e \left( W_{0}^{\hat{v}}\sum_{\hat{v}\mathrm{: input \, of \,} e_{\mathrm{out}}}[\hat{v}] 
-
W_0^{\hat{v}}\sum_{\hat{v}\mathrm{: output \, of \,} e_{\mathrm{out}}}[\hat{w}]
\right).
\end{aligned}
\]

Equivalently,
we can write the Laplacian $L_0$ in the matrix form as
\[
L_0
=
W_0\, \mathcal{I}\, W_1\, \mathcal{I}^T,
\]
where $\mathcal{I}$ is the incidence matrix of the underlying oriented hypergraph.
\end{definition}

\begin{remark}
\cref{def:laphyper} is a generalization of the Laplacian for directed graphs.
In fact,
for a directed graph $(V, E)$,  
if we assign 
$W_0=\id_n$ and $W_1=\id_m$,
%where $n$, $m$ are the cardinalities of $V$ and $E$ respectively.
then $L_0=\mathcal{I}\mathcal{I}^T$ gives the algebraic Laplacian used in spectral graph theory \cite{MR1421568}.
Furthermore,
if $(V, E)$ is an oriented hypergraph,
and
\[W_0=\diag\left(1/\deg v_1,\cdots, 1/\deg v_n \right)\] where $\deg v_i$ denotes the degree of vertex $v_i$ for $i\leq n$,
and assign $W_1=\id_{m}$,
then $L_0$ is equal to the normalized Laplacian over the oriented hypergraph $(V, E)$; we refer to {\cite[Section 1.4]{MR4433789}} for more details.
   
\end{remark}

\begin{definition}[\cite{MR4236395}]\label{def_dual_hg}
Let $H=(V, E)$ be an oriented hypergraph with $V=\{v_1,\cdots, v_n\}$ and $E=\{e_1,\cdots, e_m\}$.
We define the \emph{dual hypergraph} $H^T$ with respect to $H$ as $H^T=(V', H')$,
where $V'=\{v'_1, \cdots, v'_m\}$ and $E'=\{e'_1, \cdots, e'_n\}$,
\[
v'_j\in e'_i \textrm{ as input (resp. output)} 
\Longleftrightarrow
v_i\in e_j \textrm{ as input (resp. output)}.
\]
\end{definition}
According to \cref{def_dual_hg},
for a weighted oriented hypergraph $H=(V, E, W_0^H, W_1^H)$,
$\mathcal{I}(H^T)=\mathcal{I}(H)^T$,
and the dual weighted oriented hypergraph is 
$H'=(V', E', W_0^{H'}, W_1^{H'})$,
with
$W_{0}^{H'}=W_{1}^{H}$,
$W_{1}^{H'}=W_{0}^H$.

\section{Cubical complexes}\label{appdx_cub_cplx}
In this section, we recall the basics of cubical complexes. There are many good sources for this, see e.g., \cite{MR3025945} for one written for a TDA audience.

\begin{definition}
An \emph{elementary interval} is defined as a unit interval $[k, k+1]\in \mathbb{R}$,
or a degenerate interval $[k, k]$.
\end{definition}

\begin{definition}
A $q$-\emph{cube} is defined as the product of $q$ unit intervals: $I\coloneqq \prod_{i=1}^q I_q$.
And we say the dimension of $I$ is $q$.
\end{definition}
By this definition,
$0$-cubes,
$1$-cubes,
$2$-cubes and $3$-cubes are vertices, edges, squares, and $3D$ cubes (voxels) respectively.

\begin{definition}
Let $I$, $J$ be two cubes.
We say $I$ is a \emph{face} of $J$ if and only if $I\subset J$.
\end{definition}

\begin{definition}
A subset $\KK\subset \mathbb{R}^q$ is called a \emph{cubical complex} of dimension $q$ if it is a collection of cubes of dimension at most $q$.
And $\KK$ is closed under taking faces and intersections.
\end{definition}

\begin{definition}
Let $\KK$ be a cubical complex.
The chain group $C_q^\KK$ is defined as the linear space generated by all $q$-cubes in $\KK$ over $\mathbb{R}$. 
\end{definition}

\begin{figure}[H]
    \centering
    \begin{tikzpicture}
  
    \draw[thick] (0,0) -- (2,0) -- (2,2) -- (0,2) -- cycle;
    \fill[yellow!40] (0,0) rectangle (2,2);
    
    \fill (0, 0) circle (2pt);  
    \fill (2, 0) circle (2pt);  
    \fill (2, 2) circle (2pt); 
    \fill (0, 2) circle (2pt);  

    \node[below left] at (0, 0) {$(m, n)$};
    \node[below right] at (2, 0) {$(m+1, n)$};
    \node at (2.4, 2.4) {$(m+1, n+1)$};
    \node at (-0.3, 2.4) {$(m, n+1)$};

    \node at (1, 1) {\Huge $I$};

\end{tikzpicture}
    \caption{A $2$-cube $I$}
    \label{fig_2_cube}
\end{figure}

\begin{definition}
We define the boundary map $\partial_q: C_{q}^\KK\to C_{q-1}^\KK$ as follows:
\begin{enumerate}
    \item For a $0$-cube $[k, k]$,
\[
\partial_{0}([k, k])\coloneqq 0,
\]
 \item for a $1$-cube $[k, k+1]$,
\[
\partial_1([k, k+1])\coloneqq [k+1, k+1] - [k, k],
\]
\item for a general $q$-cube $I=I_1\times \cdots I_q$
with $q>1$,
\[
\partial_q (I)\coloneqq
\partial_1(I_1)\times I_2\times \cdots, \times I_q
-
I_1 \times \partial_{q-1} (I_2\times \cdots \times I_q).
\]
\end{enumerate}
\end{definition}

As an illustration,
consider a simple $2$-cube $I=[m, m+1]\times [n, n+1]\subset \mathbb{R}^2$ as displayed in \cref{fig_2_cube},
the boundary of $I$ is: 
\[
\partial_2(I)
=
[m+1, m+1]\times [n, n+1] - [m, m]\times [n, n+1] 
-
[m, m+1]\times [n+1, n+1]
+
[m, m+1]\times [n, n].
\]

\begin{comment}
\begin{proof}
According to \cref{prop:pers_lap},
the transformation from $B_{q+1}^{\LL}$ to $B_{q+1}^{\LL, \KK}$ is linear,
moreover, we only act over columns of $B_{q+1}^{\LL}$,
therefore, 
there is a matrix $T$ such that
\[
\begin{bmatrix}
B_{q+1}^{\LL, \KK}\\
0
\end{bmatrix}
=
B_{q+1}^\LL T,
\]
especially from the proof of \cref{prop:pers_lap} we see that $T$ has full column rank.
Suppose $B_{q+1}^{\LL, \KK}$ has no full column rank,
then there is some non-zero vector $v$ such that $B_{q+1}^{\LL, \KK}v=0$,
we then have $B_{q+1}^\LL T v=0$,
this contradicts with our assumption that the matrix $B_{q+1}^{\LL, \KK}$ has full column rank.

\end{proof}
\end{comment}

\section{Golub-Kahan-Lanczos method}
\label{appdx_GKL}
In this section we briefly review the Golub-Kahan-Lanczos method (GKL) to compute singular values of an $m\times n$ matrix $A$,
mainly following the exposition in \cite[Section 6.3.3]{MR1792141}.
Let $A\in M_{m\times n}(\mathbb{R})$,
and without loss of generality
we suppose that $m\geq n$; if $m<n$, we consider $A^T$ instead.
We refer to the eigenvectors of the symmetric matrix $A^TA$ (resp. $AA^T$) as \emph{right singular vectors} (resp. \emph{left singular vectors}).
Our goal is to bidiagonalize the matrix $A$,
i.e,
to find a matrix factorization
\[
U^TAV
=
B
=
\begin{bmatrix}
    \alpha_1 & \beta_1 & & & & \\
    & \alpha_2 & \beta_2 & \\
    & & \alpha_3 & \beta_3 \\
    & & & \ddots & \ddots \\
    & & & & \alpha_{n-1} & \beta_{n-1}\\
    & & & & & \alpha_n
\end{bmatrix}.
\]
Here $V$ is an $n\times n$ orthogonal matrix and $U$ is an $n\times m$ matrix orthogonal matrix whose column vectors are the left singular vectors associated with $A$,
and all $\alpha$'s and $\beta$'s are real.
We denote by 
$U=
\begin{bmatrix}
u_1 & u_2 & \cdots & u_n
\end{bmatrix}
$,
$V=
\begin{bmatrix}
v_1 & v_2 & \cdots & v_n
\end{bmatrix}
$,
and observe that
\[
\begin{aligned}
&A
\begin{bmatrix}
v_1 & v_2 & \cdots & v_n
\end{bmatrix}
=
\begin{bmatrix}
u_1 & u_2 & \cdots & u_n
\end{bmatrix}
B,\\
&A^T\begin{bmatrix}
u_1 & u_2 & \cdots & u_n
\end{bmatrix}
=
\begin{bmatrix}
v_1 & v_2 & \cdots & v_n
\end{bmatrix}
B^T.
\end{aligned}
\]
Computing the $j$th column on both sides,
we obtain that
\[
\begin{aligned}
&Av_j=\beta_{j-1}u_{j-1}+\alpha_ju_j,\\
&A^Tu_j = \alpha_jv_j+\beta_jv_{j+1},
\end{aligned}
\]
or equivalently,
\[
\begin{aligned}
&\alpha_ju_j=Av_j-\beta_{j-1}u_{j-1},
\quad \beta_0=0,\\
&\beta_{j}v_{j+1}=A^Tu_j-\alpha_jv_j.
\end{aligned}
\]
Since the columns of $U$ and $V$ are normalized,
we also have
\[
\begin{aligned}
&\alpha_j=||Av_j-\beta_{j-1}u_{j-1}||_2,\\
&\beta_j=||A^*u_j-\alpha_jv_j||_2.
\end{aligned}
\]
We can then compute the singular values of the bidiagonal matrix $B$ by QR algorithm,
according to \cite[Section 6.2]{MR1792141}; 
it costs $O(n^2)$.

In practical applications,
when the size of $A$ is very large,
it is usually not necessary to compute the entire bidiagonal matrix $B$,
instead,
we can iterate $k$ steps to obtain a $k\times k$ bidiagonal matrix $B_k$,
whose singular values can be treated as an approximation to those of $A$. Typically, the algorithm will converge faster to larger singular values.
We summarize the GKL algorithm in \cref{alg:GKL}. 

\begin{algorithm}[H]
 \caption{\textsc{GKL}}
 \label{alg:GKL}
 \begin{algorithmic}[1]
 \State \textbf{Input:} $A$, $k$
 \Comment{$k\leq n$}
 \State \textbf{Output:} $B_k$
 \Comment{$B$ is a bidiagonal matrix}
 
\State Randomly choose $v_1$ such that $||v_1||_2=1$ 
\State $\beta_0\gets 0$ 
\For{$j=1$ to $k$}
\State $u_j\gets Av_j - \beta_{j-1}u_{j-1}$
\State $\alpha_j\gets ||u_j||_2$
\State $u_j \gets u_j/\alpha_j$
\State $v_{j+1} \gets A^Tu_j-\alpha_jv_j$
\State $\beta_j\gets ||v_{j+1}||_2$
\State $v_{j+1}\gets v_{j+1}/\beta_j$
% \State reorthogonalize if necessary
\EndFor

\end{algorithmic}
\end{algorithm}

\begin{proposition}
\label{prop:GKL_speed}
Let $A\in M_{m\times n}(\mathbb{R})$ be a non-branching matrix.
The GKL bidiagonalization of $A$ costs $O(k(m+n))$ if iterating $k$ times.
\end{proposition}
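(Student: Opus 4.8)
The plan is to bound the cost of a single pass through the \textbf{for}-loop of \cref{alg:GKL} and then multiply by the number of iterations $k$. Each iteration performs two matrix--vector products ($Av_j$ and $A^Tu_j$), two vector updates of the form $u_j\gets Av_j-\beta_{j-1}u_{j-1}$ and $v_{j+1}\gets A^Tu_j-\alpha_jv_j$, two Euclidean-norm evaluations, and two scalar divisions. So it suffices to show that each of these primitives runs in $O(m+n)$ arithmetic operations when $A$ is non-branching and stored in a sparse format, which is our standing assumption on non-branching matrices (cf. \cref{sec:non_brch_mtx}).

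The key point, which I would establish first, is that a non-branching matrix is extremely sparse: every row has at most two non-zero entries, so $\mathrm{nnz}(A)\le 2m$, and likewise $\mathrm{nnz}(A^T)\le 2m$. Computing $Av_j$ then amounts to iterating once over the non-zero entries of $A$ while accumulating into the length-$m$ output vector, which costs $O(\mathrm{nnz}(A))+O(m)=O(m)$; computing $A^Tu_j$ is identical except that the output has length $n$, giving $O(\mathrm{nnz}(A^T))+O(n)=O(m+n)$. This is the only place where the non-branching hypothesis is used, and it replaces the $O(mn)$ dense matrix--vector cost by $O(m+n)$.

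Next I would observe that every remaining operation in the loop body touches only vectors of length $m$ (namely $u_{j-1},u_j$) or of length $n$ (namely $v_j,v_{j+1}$): the two vector updates cost $O(m)$ and $O(n)$, the two norms $\alpha_j=\|u_j\|_2$ and $\beta_j=\|v_{j+1}\|_2$ cost $O(m)$ and $O(n)$, and the two normalizations $u_j\gets u_j/\alpha_j$, $v_{j+1}\gets v_{j+1}/\beta_j$ cost $O(m)$ and $O(n)$. Combining, one iteration runs in $O(m+n)$, and therefore $k$ iterations run in $O(k(m+n))$, which is the claim.

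I do not expect any genuine obstacle; the statement is essentially an operation count. The only points that need care are making explicit that $A$ is held in a sparse representation (e.g.\ CSR or COO) so that a matrix--vector product costs $O(\mathrm{nnz})$ rather than $O(mn)$, and noting that \cref{alg:GKL} as written performs no re-orthogonalization of the Lanczos vectors --- were full re-orthogonalization included, an additional $O(k)$ work per step would enter and the total would grow to $O(k^2(m+n))$.
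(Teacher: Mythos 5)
Your proposal is correct and follows essentially the same argument as the paper: both bound $\mathrm{nnz}(A)$ by $O(m)$ using the non-branching hypothesis, conclude that the two matrix--vector products and all vector operations cost $O(m+n)$ per iteration, and multiply by $k$. Your extra remarks on sparse storage and the absence of re-orthogonalization are sensible clarifications but do not change the argument.
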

\begin{proof}
Consider the \cref{alg:GKL},
since $A$ is non-branching,
the non-zero entries of $A$ is bounded by $O(m)$,
hence both of the computations of $Av_j$ and $A^Tu_j$ cost $O(m)$.
On the other hand,
the computation of $u_j$ and $\alpha_j$ cost $O(m)$ since $u_j$ is $m$-dimensional,
similarly the computation of $v_j$ and $\beta_j$ cost $O(n)$ since $v_j$ is $n$-dimensional.
Hence in each iteration step it costs $O(m+n)$,
and in total it costs $O(k(m+n))$.
\end{proof}

\begin{proposition}
\label{prop:GKL_speed_2}
Let $A\in M_{m\times n}(\mathbb{R})$ be a matrix whose each column contains exactly $q$ non-zero entries.
The GKL bidiagonalization of $A$ with $k$ iterations costs $O\left(k\times (q\times n+m)\right)$.
\end{proposition}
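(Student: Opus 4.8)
The plan is to mirror the argument of \cref{prop:GKL_speed}, changing only the bound on the number of non-zero entries of $A$. The structural fact we exploit is elementary: a matrix with exactly $q$ non-zero entries in every column has precisely $qn$ non-zero entries in total. Consequently, assuming $A$ is stored in a sparse format (e.g.\ CSC or CSR), a sparse matrix–vector product with $A$, and likewise with $A^T$, can be carried out in $O(qn)$ arithmetic operations; this replaces the $O(m)$ bound that was available in the non-branching setting.

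Concretely, I would walk line by line through the body of the loop in \cref{alg:GKL}. Forming $Av_j$ touches each non-zero entry of $A$ once, at cost $O(qn)$; subtracting $\beta_{j-1}u_{j-1}$, computing $\alpha_j=\|u_j\|_2$, and normalizing the $m$-vector $u_j$ cost $O(m)$; forming $A^T u_j$ again touches each non-zero entry of $A$ once, at cost $O(qn)$; and subtracting $\alpha_j v_j$ together with computing $\beta_j$ and normalizing the $n$-vector $v_{j+1}$ cost $O(n)$. Since $n\le qn$, the per-iteration cost is dominated by the terms $O(qn)$ and $O(m)$, giving $O(qn+m)$ per step. Running the loop $k$ times yields the claimed bound $O\!\left(k(qn+m)\right)$.

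The only point deserving a word of care is the representation of $A$: one must assume it is stored so that iterating over its non-zero entries is linear in their number, ensuring that both $Av_j$ and $A^Tu_j$ respect the count $qn$. Beyond this bookkeeping I do not expect any real obstacle — the statement is essentially \cref{prop:GKL_speed} with the sparsity estimate "at most two non-zeros per row, hence $\le 2m$ total" replaced by "exactly $q$ non-zeros per column, hence $qn$ total", and the proof reduces to the same routine line-count of one GKL iteration.
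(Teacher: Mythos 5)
Your proposal is correct and follows essentially the same route as the paper's own proof: count the $qn$ non-zero entries to bound the sparse products $Av_j$ and $A^Tu_j$ by $O(qn)$, bound the remaining vector operations in each iteration by $O(m+n)$, and multiply by the $k$ iterations to obtain $O\left(k(qn+m)\right)$. The extra remark about the sparse storage format is a reasonable (and implicitly assumed) bookkeeping point, but otherwise there is no substantive difference from the argument given in the paper.
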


\begin{proof}
Since the matrix $A$ contains exactly $q$ non-zero entries per column,
it contains exact $qn$ non-zero entries.
Hence both of the computations of $Av_j$ and $A^Tu_j$ cost $O(qn)$.
On the other hand,
it costs $O(m+n)$ to compute $u_j$, $\alpha_j$, $v_{j}$ and $\beta_{j+1}$.
Iterating $k$ times,
the result follows.
\end{proof}

\section{Numerical linear algebra example}
\label{ex:numeric}
We then assume that $\KK$ and $\LL$ have the non-trivial weights as follows
\[
W_{2}^{\LL}
=
\begin{bmatrix}
3\epsmach & 0\\
0 & \epsmach
\end{bmatrix},
\quad
W_1^\KK
=
\begin{bmatrix}
2 & 0 & 0 & 0 & 0\\
0 & 1 & 0 & 0 & 0\\
0 & 0 & 2 & 0 & 0\\
0 & 0 & 0 & 1 & 0\\
0 & 0 & 0 & 0 & 2
\end{bmatrix}.
\]
According to \cref{thm:fastcomputation},
$W_{2}^{\LL, \KK}
=
\begin{bmatrix}
1.5\epsmach & 0\\
0 & \epsmach
\end{bmatrix}$ and 
\[
\begin{aligned}
\Delta_{1, \mathrm{up}}^{\KK, \LL}
&=
B_2^{\LL, \KK} W_{2}^{\LL, \KK} (B_2^{\LL, \KK})^T(W_{1}^{\KK})^{-1}\\
\begin{comment}
&=
\begin{bmatrix}
 1 & 0 \\
 1 & -1 \\
 -1 & 0 \\
 -1 & 1 \\
 0 & 1 \\
\end{bmatrix}
\begin{bmatrix}
    1.5\epsmach & 0\\
    0 & \epsmach
\end{bmatrix}
\begin{bmatrix}
    1 & 1 & -1 & -1 & 0\\
    0 & -1 & 0 & 1 & 1
\end{bmatrix}
\begin{bmatrix}
0.5 & 0 & 0 & 0 & 0\\
0 & 1 & 0 & 0 & 0\\
0 & 0 & 0.5 & 0 & 0\\
0 & 0 & 0 & 1 & 0\\
0 & 0 & 0 & 0 & 0.5
\end{bmatrix}\\
\end{comment}
&=
\begin{bmatrix}
0.75\epsmach & 1.5\epsmach & -0.75\epsmach & -1.5\epsmach & 0\\
0.75\epsmach & 2.5\epsmach & -0.75\epsmach & -2.5\epsmach & -0.5\epsmach\\
-0.75\epsmach & -1.5\epsmach & 0.75\epsmach & 1.5\epsmach & 0\\
-0.75\epsmach & -2.5\epsmach & 0.75\epsmach & 2.5\epsmach & 0.5\epsmach\\
0 & -\epsmach & 0 & \epsmach & 0.5\epsmach\\
\end{bmatrix}.
\end{aligned}
\]
In a computer with machine epsilon being $\epsmach$,
the up persistent Laplacian $\Delta_{1, \mathrm{up}}^{\KK, \LL}$ will be identified 
as 
\[
\Delta_{1, \mathrm{up}}^{\KK, \LL}
=
\begin{bmatrix}
0 & 1.5\epsmach & 0 & -1.5\epsmach & 0\\
0 & 2.5\epsmach & 0 & -2.5\epsmach & 0\\
0 & -1.5\epsmach & 0 & 1.5\epsmach & 0\\
0 & -2.5\epsmach & 0 & 2.5\epsmach & 0\\
0 & -\epsmach & 0 & \epsmach & 0\\
\end{bmatrix},
\]
as a result,
we will obtain only one non-zero eigenvalue since
the rank of $\Delta_{1, \mathrm{up}}^{\KK, \LL}$ will become $1$,
however,
the real rank of $\Delta_{1, \mathrm{up}}^{\KK, \LL}$ is $2$.
In contrast to that,
the matrix $M = (W_1^{\KK})^{-1/2}B_{2}^{\LL, \KK} (W_{2}^{\LL, \KK})^{1/2}$ is computed as
\[
M=
\begin{bmatrix}
0.87\sqrt{\epsmach} & 0\\
1.22\sqrt{\epsmach} & -\sqrt{\epsmach}\\
-0.87\sqrt{\epsmach} & 0\\
-1.22\sqrt{\epsmach} & \sqrt{\epsmach}\\
0 & 0.71\sqrt{\epsmach}
\end{bmatrix},
\]
we notice that the absolute value of all the entries in $M$ are larger than $\epsmach$,
thus we can obtain two non-zero singular values of $M$.
\end{document}